\numberwithin{equation}{section}
\pgfplotsset{compat=newest}
\newtheorem{lemma}{Lemma}[section]
\newaliascnt{proposition}{lemma}
\newtheorem{proposition}[proposition]{Proposition}
\newaliascnt{corollary}{lemma}
\newaliascnt{theorem}{lemma}
\newtheorem{theorem}[theorem]{Theorem}
\newaliascnt{definition}{lemma}
\newaliascnt{assumption}{lemma}
\newaliascnt{notation}{lemma}
\newaliascnt{example}{lemma}
\newaliascnt{experiment}{lemma}
\newaliascnt{remark}{lemma}
\newtheorem{remark}[remark]{Remark}
\theoremstyle{nonumberplain}
\newtheorem{proof}{Proof}
\newcommand{\N}{\mathbb{N}}
\newcommand{\R}{\mathbb{R}}
\renewcommand{\S}{\mathbb{S}}
\newcommand{\C}{\mathbb{C}}
\newcommand{\Z}{\mathbb{Z}}
\newcommand{\SO}{\mathrm{SO}(3)}
\newcommand{\abs}[1]{\left|#1\right|}
\newcommand{\norm}[1]{\left\|#1\right\|}
\newcommand{\inner}[2]{\left<#1,#2\right>}
\newcommand{\inn}[1]{\left<#1\right>}
\newcommand{\ktran}{\mathcal{F}}
\newcommand{\ktranTwo}{\mathcal{F}^{(2)}}
\newcommand{\e}{\mathrm e}
\renewcommand{\i}{\mathrm i}
\newcommand{\dd}{\, \mathrm{d}}
\newcommand{\zb}[1]{\boldsymbol{#1}}
\newcommand{\bn}{{\boldsymbol{n}}}
\newcommand{\bd}{{\boldsymbol{d}}}
\newcommand{\bx}{{\boldsymbol x}}
\newcommand{\by}{{\boldsymbol y}}
\newcommand{\bk}{{\boldsymbol k}}
\newcommand{\bv}{{\boldsymbol v}}
\newcommand{\bw}{{\boldsymbol w}}
\newcommand{\be}{{\boldsymbol e}}
\newcommand{\bz}{{\boldsymbol z}}
\newcommand{\bh}{{\boldsymbol h}}
\newcommand{\bphi}{{\boldsymbol \Phi}}
\newcommand{\bgam}{{\boldsymbol \gamma}}
\newcommand{\bsigma}{{\boldsymbol \sigma}}
\newcommand{\bpi}{{\boldsymbol \pi}}
\newcommand{\ui}{u^{\mathrm{inc}}}
\newcommand{\us}{u^{\mathrm{sca}}}
\newcommand{\rs}{{r_{\mathrm{s}}}}
\newcommand{\rM}{r_{\mathrm{M}}}
\begin{document}

\title{Motion Detection in Diffraction Tomography\\ by
 Common Circle Methods}
\author{\texorpdfstring{
Michael Quellmalz\footnotemark[1]
\and
Peter Elbau\footnotemark[2]
\and
Otmar Scherzer\footnotemark[2] \footnotemark[3] \footnotemark[4]
\and 
Gabriele Steidl\footnotemark[1]}{Michael Quellmalz, Peter Elbau, Otmar Scherzer, Gabriele Steidl}
}
\date{\today}

\maketitle

\footnotetext[1]{
TU Berlin,
Stra{\ss}e des 17. Juni 136, 
D-10587 Berlin, Germany,
\{quellmalz, steidl\}@math.tu-berlin.de.
} 
\footnotetext[2]{
University of Vienna,
Oskar-Morgenstern-Platz 1,
A-1090 Vienna, Austria,
\{otmar.scherzer, peter.elbau\}\linebreak @univie.ac.at}
\footnotetext[3]{
Johann Radon Institute for Computational and Applied Mathematics (RICAM),
Altenbergerstraße 69. A-4040 Linz, Austria
}
\footnotetext[4]{
Christian Doppler Laboratory
for Mathematical Modeling and Simulation
of Next Generations of Ultrasound Devices (MaMSi),
Oskar-Morgenstern-Platz 1,
A-1090 Vienna, Austria}

\begin{abstract}
The method of common lines is a well-established reconstruction technique in cryogenic electron microscopy (cryo-EM), 
which can be used to extract the relative orientations of an object given tomographic projection images from different directions.

In this paper, we deal with an analogous problem in optical diffraction tomography.
Based on the Fourier diffraction theorem, we show that rigid motions of the object, i.e.,  rotations and translations, 
can be determined by detecting common circles in the Fourier-transformed data.
We introduce two methods to identify common circles.
The first one is motivated by the common line approach for projection images 
and detects the relative orientation by parameterizing the common circles in the two images. 
The second one assumes a smooth motion over time and calculates the angular velocity 
of the rotational motion via an infinitesimal version of the common circle method.
Interestingly, using the stereographic projection, both methods can be reformulated as common line
methods, but these lines are, in contrast to those used in cryo-EM, 
not confined to pass through the origin and allow for a full reconstruction of the relative orientations.
Numerical proof-of-the-concept examples demonstrate the performance of our reconstruction methods.

\medskip
\noindent
\textit{Keywords.}
Diffraction tomography,
motion detection,
Fourier diffraction theorem,
common circle method,
optical imaging.

\medskip
\noindent
\textit{Math Subject Classifications.}
92C55, 
78A46, 
94A08, 
42B05.  
\end{abstract}

\section{Introduction}

A key task in many imaging modalities consists in recovering an object's inner structure given images of its illuminations from different directions.
The \emph{X-ray computed tomography} (CT) is based on a number of assumptions, most prominently that the light travels along straight lines.
If, however, the object is small compared to the wavelength of the illumination,
the optical diffraction cannot be neglected anymore.
This occurs for example when examining structures with a size of a few micrometers such as biological cells with visible light.
In the so-called \emph{optical diffraction tomography}, we respect the wave character of the light and take optical diffraction into account. 

As biological samples should be imaged preferably in a natural environment,
contact-free manipulation methods are used for rotating the object during the image acquisition process.
Such rotations can be induced by optical \cite{JonMarVol15} 
or acoustical tweezers \cite{dholakia2020comparing,KvaPreRit21,ThaSteMeiHilBerRit11}.

Therefore,
additional effort is necessary if the rigid motion of the object during the image acquisition process is unknown and has to be reconstructed from the captured images.
In this paper, we propose to tackle this problem by a method of common circles and its infinitesimal version
which is inspired by the well-known method of common lines for projection images as applied in cryogenic electron microscopy (cryo-EM) \cite{SinCoiSigCheShk10,Hee87,WanSinWen13}. 
Let us briefly recall this method first.

\paragraph{Method of common lines.} 
In computed tomography, the aim is the reconstruction of an object from given (optical) projection images for different directions of the imaging wave or, 
equivalently, different rotations of the object.
The object's absorption properties are described by a function $f\colon \R^3\to\R$, which has to be recovered.
We assume the object moves in time $t$ according to rotation matrices $R_t$, and the illumination is in direction $\be^3 = (0,0,1)^\top$.
Then, the \emph{ray transform} of $f$ is given by
\begin{equation} \label{eq:ray}
  \mathcal X_{R_t} [f](x_1,x_2)
  \coloneqq
  \int_{-\infty}^{\infty}
  f\left(R_t \, (x_1,x_2,x_3)^\top\right) \dd x_3
  ,\qquad (x_1,x_2)^\top \in\R^2.
\end{equation}
The reconstruction of $f$ is based on the Fourier slice theorem, see e.g.\ \cite[Theorem~2.11]{NatWue01}, which states that 
\begin{equation} \label{fs}
  \ktranTwo[\mathcal X_{R_t} [f]] (k_1,k_2)
  =
  \sqrt{2\pi}\,
  \mathcal F^{(3)}[f](R_t\, (k_1,k_2,0)^\top),
  \qquad (k_1,k_2)^\top \in\R^2,
\end{equation}
where $\mathcal F^{(2)}$ and $\mathcal F^{(3)}$ denote the two- and three-dimensional Fourier transforms, see \eqref{eq:FourierDef}.
Hence, given the data $\mathcal X_{R_t} [f]$ for the rotation $R_t$,
we obtain the Fourier transform of $f$ on the plane 
$P_{R_t}\coloneqq \{R_t\, (k_1,k_2,0)^\top: (k_1,k_2)^\top\in\R^2\}$ through the origin.
If the rotations $R_t$ are known and the planes $P_{R_t}$ fully cover $\R^3$, 
e.g.\ when the object makes a full turn around a fixed rotation axis other than $\be^3$,
then we can reconstruct $f$ by the inverse 3D Fourier transform.
However, in cryo-EM, the rotations $R_t$ of the object are not known. 
The method of common lines makes use of the fact that two planes $P_{R_s}$ and $P_{R_t}$ intersect for $R_s\be^3\ne\pm R_t\be^3$ in a common line which contains the origin.
This common line can be detected from the projection data $\mathcal X_{R_s}$ and $\mathcal X_{R_t}$ by
maximizing the correlation of all possible combinations of lines in the two planes,
which is a minimization problem in two variables.
Note that the common line detection is usually not performed directly in the Fourier space, but by comparing lines of
the 2D Radon transform of $\mathcal X_{R_t}[f]$, see \cite{Hee00} and also \cite{BenBarSin20} for computational methods.
Keeping one plane fixed, the second plane is not uniquely determined just by their common line, see \autoref{fig:common_line} (left side).
We have to compute the pairwise common lines between three planes to  determine the rotation angles between them, 
see \autoref{fig:common_line} (right side).
Alternatively, the reconstruction can be done by moment-based methods \cite{KetLam11}.
Furthermore, Kam's method considers reconstructing $f$ without the need of computing the motion parameters first \cite{Kam80,ShaKilKhoLanSin20}.
\begin{figure}\centering
\includegraphics[height=4.5cm]{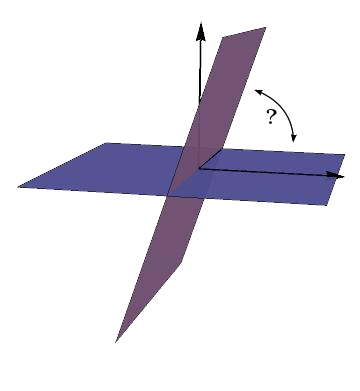}\quad
\includegraphics[height=4.5cm]{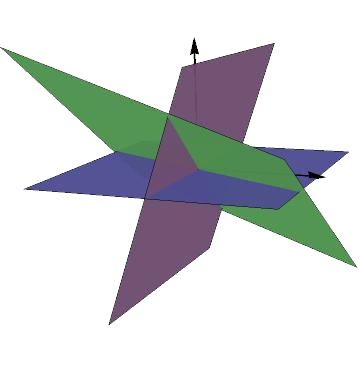}
\caption{
  {\em Left:} A common line pair between two planes $P_{R_s}$ and $P_{R_t}$ does not determine the relative angle between them uniquely.
  {\em Right:}
  Knowing pairwise common lines between three planes, we can determine their orientations uniquely (except for degenerate cases). Courtesy of Denise Schmutz from \cite[Figure 6.2 and Figure 6.3]{Schm17}. 
\label{fig:common_line}}
\end{figure}

\paragraph{Diffraction tomography.}
In optical diffraction tomography, we use a modeling based on Born's or Rytov's approximation of the scattered wave, see e.g.\ \cite[Chapter~6]{KakSla01}.
The Fourier diffraction theorem \cite{Wol69} provides a relation between the measured and Fourier-transformed data
and the Fourier transform of the scattering potential, which we want to reconstruct conceptually similar as for the method of common lines in \eqref{fs}.
Once we know the motion parameters,
the scattering potential can be reconstructed using a backpropagation formula \cite{Dev82,MueSchuGuc15_report}
or inverse discrete Fourier methods \cite{KirQueRitSchSet21}, which can deal with arbitrary, irregular motions.
Under Born's approximation and certain conditions on the moments of the scattering potential, it was shown \cite{KurZic21} that there exists a unique solution to the problem of determining the scattering potential given measurements with unknown object rotations
in an experiment where all possible rotations of the object are performed.

In this paper, we are interested in an experimental setup where the parameters of the rigid motion need to be determined in parallel to the tomographic reconstruction.
We show that the rotations can be determined by an approach which we call the ``method of common circles''.
It is based on computing a common circular arc of intersecting hemispheres. 
In particular, we show that only two hemispheres
are required to compute the rotation,
whereas, in the context the inversion of the ray transform
based on the Fourier slice theorem,
one needs to consider the intersection of three planes.
Furthermore, the object's translation can be completely determined from the measurement data
under some assumptions on the object.
This is in contrast to the ray transform, where the measurements are invariant to every translation of the object in direction of the incident wave. 
The diffraction data is sensitive to the third component of the translations, which allows the full recovery.
The concept of common circles or common arcs was addressed in an empirical way in \cite{HulSzoHaj03}, 
and its application for recovering rotations in the context of crystallography was sketched in \cite{BorTeg11}.
In this paper, we give a rigorous mathematical treatment of the motion reconstruction,
which includes also the determination of translations of the object and an approach based on a time-continuous motion.
For instance, time-continuous motions are appropriate to model tomographic experiments where the object is moved with 
tweezers. In these experiments the motion is continuous but not uniform as in medical CT. 
Such models are in general simpler than Cyro-tomographic experiments, where in a pre-processing steps $X$-ray projection images need to be aligned (numbered) according to their orientations. In this sense tomographic reconstructions based on a time-continuous movement are simpler than standard Cryo-tomographic problems. Moreover, when we assume a time-continuous rigid movement, we can use an infinitesimal calculus for deriving reconstruction methods, leading to the method of \emph{infinitesimal common circle} motion estimation
(see \autoref{sec:cont-cc}).

\paragraph{Outline of this paper.}
In \autoref{sec:ODT}, we describe the model of diffraction tomography with the object undergoing a rigid motion.
Then, in \autoref{sec:comm_circ}, we derive the common circle method for reconstructing the object's \emph{rotations}. 
In \autoref{sec:cont-cc}, we give an infinitesimal version of the common circle method, 
where we assume that the rotations depend smoothly on the time.
\autoref{sec:translations} covers the reconstruction of the \emph{translations} of the object.
In \autoref{sec:recon}, we describe reconstruction methods based on our theoretical findings.
We perform numerical proof-of-concept simulations in \autoref{sec:numerics}
of the proposed methods with two different phantoms and two different motion experiments.
Moreover, based the infinitesimal approach, we can efficiently compute an initialization 
for our optimization algorithm in the direct common circle method.
We postpone technical proofs to 
Appendix \ref{sec:proof} - \ref{sec:proof_2}.
An interesting relation between common circles and common lines
based on the stereographic projection is outlined in Appendix \ref{se:stereo}.

\section{Diffraction Tomography}\label{sec:ODT}
\subsection{Fourier diffraction theorem}

\begin{figure}
	\begin{center}
	\includegraphics[width=0.7\textwidth]{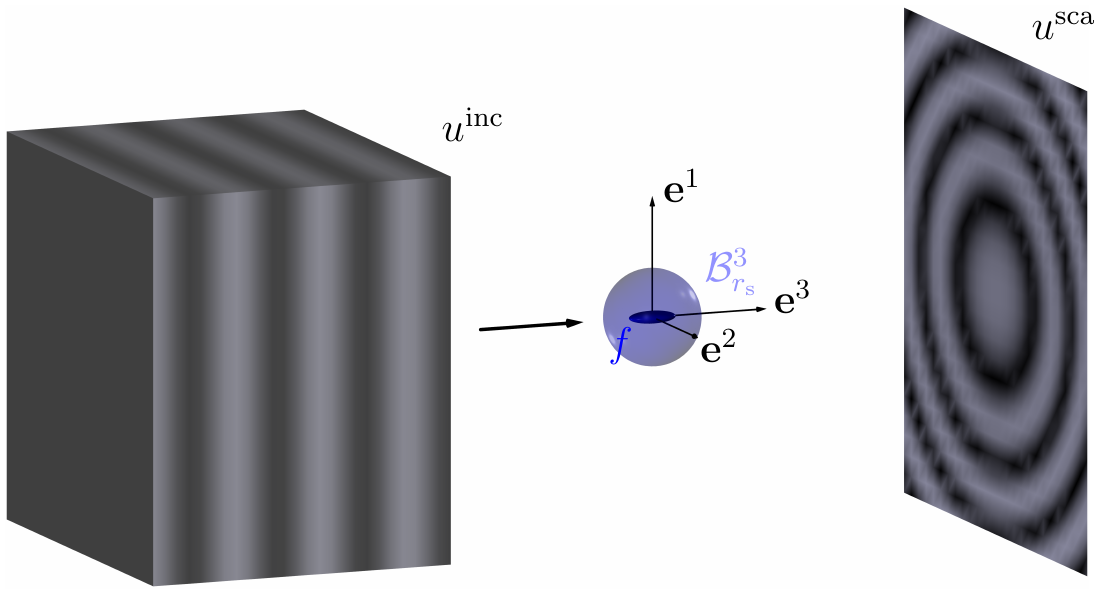}
	\end{center}

\caption{Experimental setup of transmission imaging in optical diffraction tomography. }
	\label{fig:trans}
\end{figure}

Throughout this paper, 
we consider the following experimental setup of \emph{optical diffraction tomography}, 
which is described in detail in \cite{KirQueRitSchSet21,Wol69}.
The unknown object is illuminated by an \emph{incoming plane wave}, 
which propagates in  direction $\be^3 
$ with wave number $k_0 > 0$. 
This is represented by a function
\begin{equation} \label{eq:plane_wave}
\ui(\bx) \coloneqq \e^{\i k_0 x_3}, \quad \bx\in\R^3,
\end{equation}
where we normalized the amplitude to one.
The object shall be contained in an open ball $\mathcal B^3_{\rs}$ of some radius $\rs>0$, where we use the notation
$$\mathcal B^d_{r} \coloneqq \{ \bx\in\R^d : \norm{\bx}< r \}\quad\text{for }d\in\N,\;r>0,$$
with the Euclidean norm $\norm{\cdot}$.
Further, we will need spheres
$\partial\mathcal B_{r}^d(\bz) \coloneqq \{\by\in\R^d: \norm{\by-\bz}=r\}$.
Then the incident wave $\ui$ induces a \emph{scattered wave} $\us$ 
which is recorded in transmission imaging in a plane $\{\bx\in\R^3:x_3=\rM\}$ at a position $\rM > \rs$ outside the object, see \autoref{fig:trans}.
The scattered wave $\us$ can be calculated from the incoming wave $\ui$ and the \emph{scattering potential} $f\colon\R^3\to\R_{\ge0}$, which is a piecewise continuous function with support in $\mathcal B^3_{\rs}$, of the unknown object as a solution of the partial differential equation
\begin{equation} \label{eq:totII}
-(\Delta + k_0^2)\us(\bx) = f(\bx) \left(\us(\bx)+\ui(\bx)\right)
,\qquad \bx\in\R^3,
\end{equation}
which fulfills the Sommerfeld radiation condition
\begin{equation}\label{eq:Sommerfeld}
\lim_{r\to\infty} \max_{\norm\bx=r} \norm{\bx} \abs{\langle\nabla\us(\bx),\tfrac{\bx}{\norm\bx}\rangle-\i k_0\us(\bx)}=0.
\end{equation}
The condition that $f$ is real-valued means that no absorption occurs in the object.
If $\norm f_\infty$ is sufficiently small, the solution $\us$ is small in comparison to $\ui$, so that it can be neglected on the right-hand side of \eqref{eq:totII} and we obtain the \emph{Born approximation} $u$ of the scattered field $\us$, determined by
\begin{subequations}\label{eq:BornSystem}\noeqref{eq:Born}\noeqref{eq:BornSommerfeld}
\begin{align}
-(\Delta + k_0^2) u &= f\ui,\label{eq:Born} \\
\lim_{r\to\infty} \max_{\norm\bx=r} \norm{\bx} \abs{\langle\nabla u(\bx),\tfrac\bx{\norm\bx}\rangle-\i k_0 u(\bx)}&=0.\label{eq:BornSommerfeld}
\end{align}
 \end{subequations}
In the following, we assume that the Born approximation of the scattered wave is valid,
which holds true for small objects which mildly scatter, cf.\ \cite{FauKirQueSchSet21,KakSla01}.

The advantage of the Born approximation is that the solution $u$ 
of the Helmholtz equation \eqref{eq:Born} fulfilling the radiation condition \eqref{eq:BornSommerfeld} can be explicitly written  in the form
\[ u(\bx) = \int_{\R^3}\frac{\e^{\i k_0\norm{\bx-\by}}}{4\pi\norm{\bx-\by}}f(\by)\ui(\by)\dd\by, \]
see, for example, \cite[Theorem 8.1 and 8.2]{ColKre13}.

To calculate from the detected field $u(x_1,x_2,\rM)$, $(x_1,x_2)\in\R^2$, the scattering potential $f$, we use the Fourier diffraction theorem, which relates the two-dimensional Fourier transform of the measurement $(x_1,x_2)\mapsto u(x_1,x_2,\rM)$ to the three-dimensional Fourier transform of the scattering potential~$f$, see for instance \cite[Section~6.3]{KakSla01}, \cite[Theorem~3.1]{NatWue01} or \cite{Wol69}. We use here the version \cite[Theorem~3.1]{KirQueRitSchSet21} derived for the more general case $f\in L^p(\R^3\to\C)$, $p>1$, which states that
\begin{equation} \label{eq:recon}
\ktran_{1,2}[u](\bk, \rM)=\sqrt{\frac{\pi}{2}}\,\frac{\i \e^{\i\kappa(\bk) \rM}}{\kappa(\bk)}\ktran [f]\left(\bh(\bk) \right)\quad\text{for all}\quad\bk = (k_1,k_2)\in\mathcal B_{k_0}^2,
\end{equation}
where $\bh\colon\mathcal B^2_{k_0} \to\R^3$ is defined by 
\begin{equation}\label{eq:h}
\bh(\bk)
\coloneqq
\begin{pmatrix}\bk\\\kappa(\bk)-k_0\end{pmatrix}, \quad \kappa(\bk) \coloneqq \sqrt{k_0^2-\norm{\bk}^2}.
\end{equation}
Here the $d$-dimensional \emph{Fourier transform} 
is defined for $g\colon\R^3\to\C$ 
by
\begin{equation}\label{eq:FourierDef}
\mathcal F[g](\by)
  \coloneqq (2\pi)^{-d/2} \int_{\R^d} g(\bx)\,\e^{-\i \langle\bx,\by\rangle } \dd \bx
  ,\quad\by\in\R^d.
\end{equation}
Moreover, we define the \emph{partial Fourier transform} 
in the first two components as 
\begin{equation}\label{eq:partFourierDef}
\mathcal F_{1,2}[g](k_1,k_2,x_3)\coloneqq (2\pi)^{-1} \int_{\R^2} g(x_1,x_2,x_3)\, \e^{-\i (x_1k_1+x_2k_2)} \dd (x_1,x_2)
,\quad(k_1,k_2,x_3)^\top \in\R^3.
\end{equation}
We skip the dependence of the Fourier transform on the dimension in the notation, 
since this becomes clear from the context.
Geometrically, the Fourier diffraction theorem can be interpreted as follows:
The left-hand side of \eqref{eq:recon} is the Fourier transform of the (two-dimensional) measured images,
while the right-hand side evaluates the three-dimensional Fourier transform of $f$ 
on a hemisphere whose north pole is the origin $\zb0$, 
see the blue hemisphere in \autoref{fig:intersection_spheres}.

\subsection{Motion of the object}
In our setting, we record diffraction images while exposing the object of interest to an unknown rigid motion
$\Psi \colon[0,T]\times\R^3 \to\R^3$,
\begin{equation}\label{eq:motion0}
\Psi_t(\bx)\coloneqq R_t^\top\bx+\bd_t,
\end{equation}
which rotates the object by the rotation matrix $R_t^\top \in\SO\coloneqq\{Q\in\R^{3\times3}: Q^\top Q = I, \, \mathrm{det} \, Q = 1\}$, and translates it by the vector $\bd_t\in\R^3$. Hereby, we consider the object at time $t=0$ as the reference object 
and set correspondingly $\Psi_0$ to be the identity map, that is, $R_0\coloneqq I$ and $\bd_0\coloneqq\zb0$.
The scattering potential of the object that is exposed to this rigid motion $\Psi$ is then described by the function $t\mapsto f\circ\Psi_t^{-1}$, where the inverse function $\Psi_t^{-1}$ is explicitly given by
\begin{equation}\label{eq:motion}
\Psi_t^{-1}(\by)=R_t(\by-\bd_t).
\end{equation}
The diffraction images are now obtained by continuously illuminating the moving object with the incident wave $\ui$ given by \eqref{eq:plane_wave}, and recording the resulting scattered wave (which we will approximate by its Born approximation) on the detector surface $\{\bx\in\R^3:x_3=\rM\}$. We  denote by $u_t$, $t\in[0,T]$, the Born approximation of the wave scattered in the presence of the transformed scattering potential $f \circ \Psi_t^{-1}$, which satisfies the system \eqref{eq:BornSystem} with $f$ replaced by $f \circ \Psi_t^{-1}$, that is, the differential equation
\[ \Delta u_t + k_0^2 u_t = - (f \circ \Psi_t^{-1})\ui \]
together with the radiation condition
\[ \lim_{r\to\infty} \max_{\norm\bx=r} \norm{\bx} \abs{\inner{\nabla u_t(\bx)}{\tfrac x{\norm x}}-\i k_0 u_t(\bx)}=0. \]
Then the recorded measurement data is given by the function $m\colon[0,T]\times\R^2 \to\R$ with
\begin{equation}\label{eq:meas}
 m_t(x_1,x_2)\coloneqq u_t(x_1,x_2,\rM).
\end{equation}
Switching to the Fourier domain with respect to $x_1$ and $x_2$, we find by \eqref{eq:recon} for $\bk\in\mathcal B_{k_0}^2$ that
\begin{equation*}
\begin{split}
	\mathcal F[m_t](\bk) 
	&= \sqrt{\frac{\pi}{2}}\frac{\i \e^{\i\kappa(\bk)\rM}}{\kappa(\bk)}\ktran[f\circ\Psi_t^{-1}]\left(\bh(\bk) \right) \\
	&= \sqrt{\frac{\pi}{2}}\frac{\i \e^{\i\kappa(\bk)\rM}}{\kappa(\bk)}(2\pi)^{-\frac32}\int_{\R^3}f\big(R_t(\by-\bd_t)\big)\e^{-\i\inner{\by}{\bh(\bk)}}\dd\by \\
	&= \sqrt{\frac{\pi}{2}}\frac{\i \e^{\i\kappa(\bk)\rM}}{\kappa(\bk)}(2\pi)^{-\frac32}\int_{\R^3}f(\zb{\tilde y})\,\e^{-\i\inner{R^\top\zb{\tilde y}+\bd_t}{\bh(\bk)}}\dd\zb{\tilde y} ,
\end{split}
\end{equation*}
and hence the explicit relation
\begin{equation}\label{eq:recon_n}
    \mathcal F[m_t](\bk)
    = \sqrt{\frac{\pi}{2}}\frac{\i \e^{\i\kappa(\bk)\rM}}{\kappa(\bk)}\ktran [f]\left(R_t\bh(\bk) \right)\e^{-\i\inner{\bd_t}{\bh(\bk)}}
\end{equation}
between the measured data $m_t$ and the unknown scattering potential $f$.

However, this depends on the unknown parameters $R_t$ and $\bd_t$ describing the motion of the object.
The aim of this paper is to recover both unknown motion parameters.
We will first reconstruct the rotation matrix $R_t$ 
from the absolute values $\abs{\mathcal F[m_t]}$ by  two different approaches, namely
i) 
the method of common circles, which is in the spirit of the common lines method in ray transforms, 
and
ii)
the infinitesimal method for finding changes in the angular velocity during the motion which assumes
smooth rotations $R_t$ in time.
Relying just on absolute values $\abs{\mathcal F[m_t]}$ removes the dependency on the translations $\bd_t$, 
which only enter into the Fourier transform as a phase factor. 
Therefore, we will use the full data $\mathcal F[m_t]$ to reconstruct the translation vectors~$\bd_t$ in the second step.
Knowing $R_t$ and $\bd_t$, relation \eqref{eq:recon_n} 
can be used to  reconstruct the scattering potential $f$ as described in \cite{KirQueRitSchSet21}.

At this point, we want to stress that the possible reduction of the data to the absolute values $\abs{\mathcal F[m_t]}$ for the reconstruction of the rotations is not directly connected to the phaseless optical diffraction measurements, where only the absolute values $\abs{\us(x_1,x_2,\rM)}$, $x_1,x_2\in\R$, of the scattered wave $\us$ are detected, see \eqref{eq:recon_n}.
\definecolor{tubR}{RGB}{197,14,31}
\definecolor{vieB}{RGB}{0,105,170}
\begin{figure}[!ht]
\begin{center}
  \centering\includegraphics[height=15em]{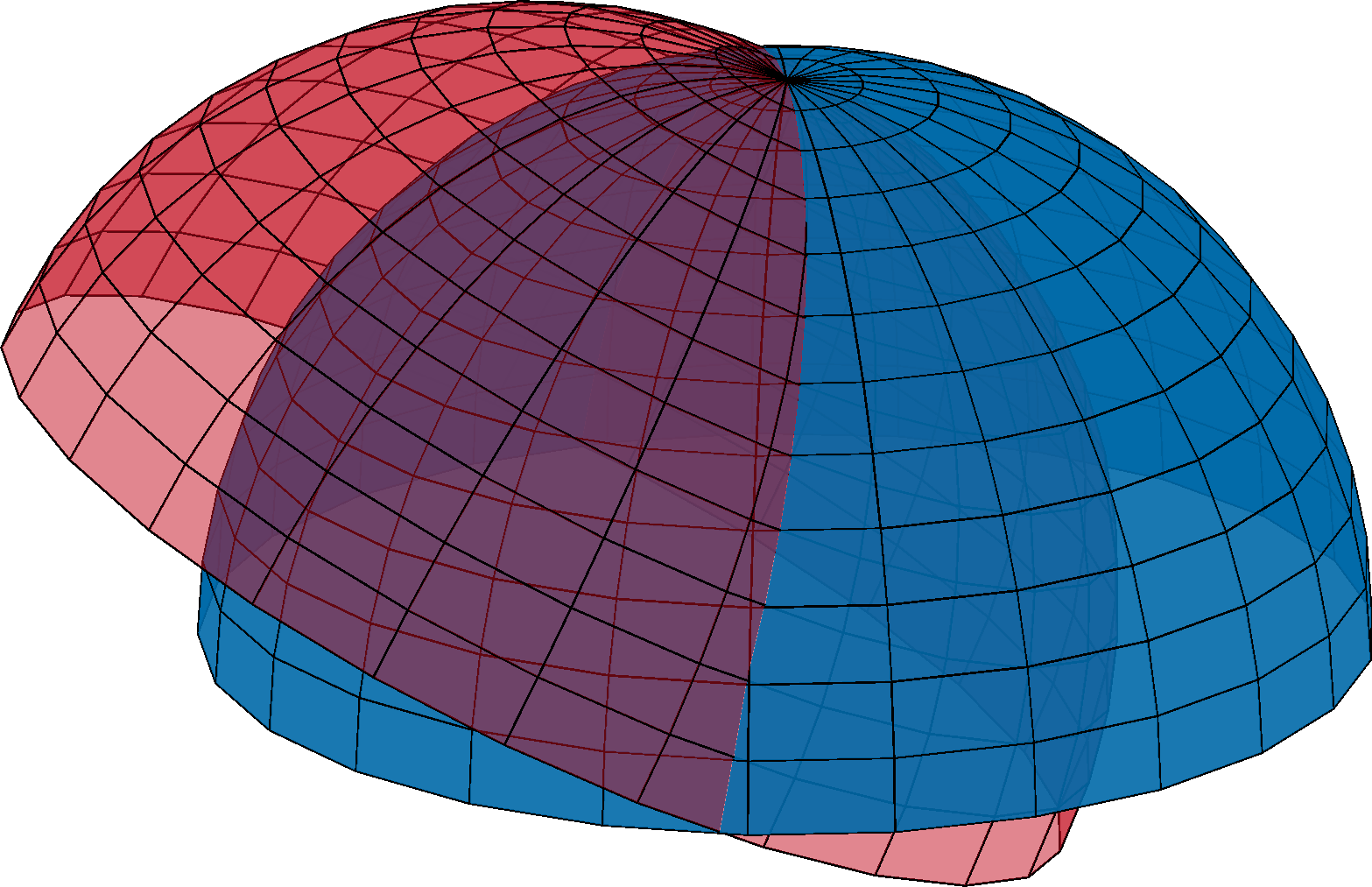}
  \caption{Illustration of the common circles. 
    Two hemispheres $\mathcal H_0$ and $\mathcal H_t$ intersect in a common circle arc. 
  	The north pole of the hemispheres is at $\zb0$.  \label{fig:intersection_spheres}
}
\end{center}
\end{figure}
%
\section{Common Circle Method} \label{sec:comm_circ}
%
Given measurements $m_t$, $t \in [0,T]$ from \eqref{eq:meas}, we can compute their \emph{scaled squared energy} 
$\nu_t \colon\mathcal B^2_{k_0} \to[0,\infty)$ by 
\begin{equation} \label{eq:nu}
 \nu_t(\bk) \coloneqq \frac2\pi \kappa^2(\bk)\abs{\mathcal F[m_t](\bk)}^2
\end{equation}
with $\kappa$ from \eqref{eq:h}.
According to \eqref{eq:recon_n}, this can be expressed in terms of the scattering potential $f$ as
\begin{equation}\label{eq:nu-F}
\nu_t(\bk)
= \abs{\ktran[f] (R_t\bh(\bk))}^2
,\qquad \bk \in \mathcal B^2_{k_0}.
\end{equation}
Thus we observe for $s \not = t$ that 
\begin{equation} \label{match}
\nu_s(\bk_{s,t})=\nu_t(\bk_{t,s})
\end{equation}
holds for all pairs $(\bk_{s,t},\bk_{t,s})\in\mathcal B^2_{k_0}\times\mathcal B^2_{k_0}$  
fulfilling 
\begin{equation}\label{eq:comm_circ_cond}
R_s\bh(\bk_{s,t}) =  R_t\bh(\bk_{t,s}).
\end{equation}
The aim of this section is to parameterize 
the curve consisting of all points $\bsigma_{s,t} = R_s(\bh(\bk_{s,t})) =  R_t(\bh(\bk_{t,s}))$
and use this afterwards for describing the associated curves $\bk_{s,t}$ in their respective planes.
We will see that the first one is a circular arc in the intersection of two hemispheres, while the second one is an elliptic arc.
Having a parameterization with respect to the planes, where the curves are supported,
we can switch to their description via the Euler angles of the rotation $R_s^\top R_t$.
In Section \ref{sec:recon},
we will use this description to determine the Euler angles by minimizing a functional 
based on the matching  condition 
$\nu_s(\bk_{s,t})=\nu_t(\bk_{t,s})$. 

We start by defining the sets
\[ \mathcal H_t \coloneqq\left\{ R_t\bh(\bk):\bk \in \mathcal B^2_{k_0} \right\},\quad t\in[0,T], \]
which are by \eqref{eq:h} the hemispheres with radius~$k_0$ and center $-k_0R_t\be^3$, i.e.,
\begin{align} 
\mathcal H_t &= \{R_t\by:\by\in\partial\mathcal B_{k_0}^3(-k_0\be^3),\,y_3>-k_0\} \\
&= \{\bx\in\partial\mathcal B_{k_0}^3(-k_0R_t\be^3):\langle\bx,R_t\be^3\rangle>-k_0\}, \label{eq:Ht-sphere} 
\end{align}
see \autoref{fig:intersection_spheres}.
The intersection $\mathcal H_s\cap\mathcal H_t$, $s\ne t$ is an arc of a circle
and the reason why we call this approach ,,method of common circles''.
The following lemma gives its parameterization.

\begin{lemma}[Parameterization of the common circular arcs]\label{th:YsYt_param}
Let $s,t\in[0,T]$ such that $R_s\be^3\neq \pm R_t\be^3$. 
Then it holds $\mathcal H_s\cap\mathcal H_t = \{\bsigma_{s,t}(\beta):\beta\in J_{s,t}\}$ 
with the curve $\bsigma_{s,t}\colon J_{s,t}\to\R^3$ defined by
\begin{equation}\label{eq:YsYt_param}
\bsigma_{s,t}(\beta)\coloneqq a_{s,t}(\cos(\beta)-1)\bv_{s,t}^1+ a_{s,t}\sin(\beta)\bv_{s,t}^2,
\end{equation}
where we used the positively oriented, orthonormal basis
\begin{equation}\label{eq:YsYt_param:basis}
\bv_{s,t}^1\coloneqq\frac{R_s\be^3+R_t\be^3}{\norm{R_s\be^3+R_t\be^3}},\quad
\bv_{s,t}^2\coloneqq\frac{R_s\be^3\times R_t\be^3}{\norm{R_s\be^3\times R_t\be^3}},\quad
\bv_{s,t}^3\coloneqq\frac{R_s\be^3-R_t\be^3}{\norm{R_s\be^3-R_t\be^3}},
\end{equation}
the radius 
$$ a_{s,t}\coloneqq\frac{k_0}2\norm{R_s\be^3+R_t\be^3},$$ 
and the interval
\begin{equation}\label{eq:YsYt_param:angle}
J_{s,t}\coloneqq\begin{cases}(-\pi,\pi]&\text{if }\langle R_s\be^3,R_t\be^3\rangle\le0,\\(-\beta_{s,t},\beta_{s,t})&\text{if }\langle R_s\be^3,R_t\be^3\rangle>0\end{cases}\quad\text{with}\quad\beta_{s,t}\coloneqq\arccos\left(\frac{\langle R_s\be^3,R_t\be^3\rangle-1}{\langle R_s\be^3,R_t\be^3\rangle+1}\right).
\end{equation}
In particular, we have $\bsigma_{s,t}(\beta) = \bsigma_{t,s}(-\beta)$ for all $\beta \in J_{s,t}$.
\end{lemma}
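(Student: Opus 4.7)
The plan is to identify $\mathcal H_s \cap \mathcal H_t$ as a piece of a circle obtained by intersecting two spheres of equal radius $k_0$, express that circle in the orthonormal basis $(\bv_{s,t}^1, \bv_{s,t}^2, \bv_{s,t}^3)$, and then cut out the hemispherical portion by computing an inner product.

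By \eqref{eq:Ht-sphere}, $\mathcal H_s$ and $\mathcal H_t$ are open hemispheres of the spheres $\partial\mathcal B_{k_0}^3(-k_0 R_s\be^3)$ and $\partial\mathcal B_{k_0}^3(-k_0 R_t\be^3)$. The hypothesis $R_s\be^3 \neq \pm R_t\be^3$ means the two centers are distinct and nonantipodal, so the intersection of the full spheres is a nondegenerate circle. Subtracting the two sphere equations $\|\bx + k_0 R_s\be^3\|^2 = k_0^2 = \|\bx + k_0 R_t\be^3\|^2$ yields the linear constraint $\langle\bx, R_s\be^3 - R_t\be^3\rangle = 0$, i.e.\ $\bx \perp \bv_{s,t}^3$, so the intersection lies in $\operatorname{span}(\bv_{s,t}^1, \bv_{s,t}^2)$. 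A short verification shows that $(\bv_{s,t}^1, \bv_{s,t}^2, \bv_{s,t}^3)$ is indeed a positively oriented orthonormal basis: orthogonality of $\bv^1, \bv^3$ follows from $\|R_s\be^3\| = \|R_t\be^3\| = 1$, the magnitudes $\|R_s\be^3 \pm R_t\be^3\|^2 = 2(1\pm c)$ with $c = \langle R_s\be^3, R_t\be^3\rangle$ give unit length, and the identity $(R_s\be^3 + R_t\be^3) \times (R_s\be^3 \times R_t\be^3) = (1+c)(R_s\be^3 - R_t\be^3)$ via the $BAC{-}CAB$ rule gives $\bv^1 \times \bv^2 = \bv^3$.

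Decomposing $R_s\be^3 = \sqrt{(1+c)/2}\,\bv^1 + \sqrt{(1-c)/2}\,\bv^3$, substituting $\bx = \alpha \bv^1 + \gamma \bv^2$ into $\|\bx + k_0 R_s\be^3\|^2 = k_0^2$, and using $a_{s,t}^2 = k_0^2(1+c)/2$ then reduces the sphere equation to $(\alpha + a_{s,t})^2 + \gamma^2 = a_{s,t}^2$. This is a circle of radius $a_{s,t}$ centered at $-a_{s,t}\bv^1$, whose standard parameterization $\alpha = a_{s,t}(\cos\beta - 1)$, $\gamma = a_{s,t}\sin\beta$ is precisely \eqref{eq:YsYt_param}. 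Next I cut the circle down to the open hemispheres: using $\langle\bv^1, R_s\be^3\rangle = \sqrt{(1+c)/2}$ and $\langle\bv^2, R_s\be^3\rangle = 0$, I obtain $\langle\bsigma_{s,t}(\beta), R_s\be^3\rangle = \tfrac{k_0(1+c)}{2}(\cos\beta - 1)$, and the same formula with $R_t\be^3$ by symmetry. The hemispherical condition reduces to the single inequality $\cos\beta > (c-1)/(c+1)$; for $c > 0$ this produces the symmetric interval $(-\beta_{s,t}, \beta_{s,t})$, while for $c \le 0$ the right-hand side satisfies $(c-1)/(c+1) \le -1$ so that the whole circle qualifies, giving $J_{s,t} = (-\pi, \pi]$ as a full parameter domain.

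The concluding symmetry $\bsigma_{s,t}(\beta) = \bsigma_{t,s}(-\beta)$ is then immediate from $\bv^1_{t,s} = \bv^1_{s,t}$, $\bv^2_{t,s} = -\bv^2_{s,t}$ (antisymmetry of the cross product), and $a_{t,s} = a_{s,t}$. The main obstacle I anticipate is not any single computation but keeping the case split for $J_{s,t}$ tidy at the boundary $c = 0$ (where one endpoint of the parameter circle lies on the equators of both hemispheres); everything else is a matter of well-chosen coordinates and the half-angle identities for $\|R_s\be^3 \pm R_t\be^3\|$.
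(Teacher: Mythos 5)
Your proof is correct and follows essentially the same route as the paper's: both reduce the two sphere equations to a circle of radius $a_{s,t}$ centered at $-a_{s,t}\bv_{s,t}^1$ in the plane $\langle\bx,R_s\be^3-R_t\be^3\rangle=0$ and then impose the hemisphere inequality, arriving at the same condition $\cos(\beta)>\frac{c-1}{c+1}$. The only differences are cosmetic (the paper takes the sum and difference of the sphere equations rather than substituting coordinates and completing the square), and you additionally verify the orthonormality of the basis and the symmetry $\bsigma_{s,t}(\beta)=\bsigma_{t,s}(-\beta)$, which the paper leaves implicit.
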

Next, according to \eqref{eq:comm_circ_cond}, we intend to find the parameterization of $\bgam_{s,t}$ such that
$\bsigma_{s,t} (\beta) =  R_s\bh(\bgam_{s,t}(\beta))$, $\beta\in J_{s,t}$. Indeed, we see in the following lemma
that $\bgam_{s,t}$ is an elliptic arc. The relation between both parameterizations is illustrated in \autoref{fig:intersection_basis},
where $P(x_1,x_2,x_3)^\top\coloneqq(x_1,x_2)^\top$.
Note that the first case in \eqref{eq:YsYt_param:angle} corresponds with when the ``lense'' in the middle of \autoref{fig:intersection_basis} is fully closed.
\begin{lemma}[Parameterization by elliptic arcs]\label{th:gamma}
Let $s,t\in[0,T]$ such that $R_s\be^3\neq \pm R_t\be^3$ and let $\bsigma_{s,t}$ be defined as in \eqref{eq:YsYt_param}. Then, we have
\begin{equation}\label{eq:sigma_gamma}
\bsigma_{s,t}(\beta) = R_s\bh(\bgam_{s,t}(\beta))\quad\text{for all}\quad\beta\in J_{s,t}
\end{equation}
with the elliptic arc $\bgam_{s,t}\colon J_{s,t}\to\mathcal B^2_{k_0}$ determined by
\begin{equation}\label{eq:gamma}
\bgam_{s,t}(\beta)\coloneqq\tilde a_{s,t}(\cos(\beta)-1)\bw_{s,t}^1+ a_{s,t}\sin(\beta)\bw_{s,t}^2,
\end{equation}
where the directions of the axes are given by
\begin{equation}\label{eq:gamma:basis}
\bw_{s,t}^1\coloneqq\frac{P(R_s^\top R_t\be^3)}{\norm{P(R_s^\top R_t\be^3)}}
\quad\text{and}\quad
\bw_{s,t}^2\coloneqq \frac{P(\be^3\times R_s^\top R_t\be^3)}{\norm{P(\be^3\times R_s^\top R_t\be^3)}},
\end{equation}
and $\tilde a_{s,t}\coloneqq\frac{k_0}2\norm{\smash{P(R_s^\top R_t\be^3)}}$.
In particular, it holds
\begin{equation} \label{eq:commonCircleIdentity}
R_s \bh(\bgam_{s,t}(\beta)) = R_t \bh(\bgam_{t,s}(-\beta)) \quad \text{for all } \beta \in J_{s,t}.
\end{equation}%
\end{lemma}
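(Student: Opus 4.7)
The plan is to reduce the lemma to a direct computation: since we already know from \autoref{th:YsYt_param} that every point on $\mathcal H_s\cap\mathcal H_t$ has the form $\bsigma_{s,t}(\beta)$, and since by definition $\mathcal H_s=\{R_s\bh(\bk):\bk\in\mathcal B^2_{k_0}\}$ with $\bh$ injective, there is a unique $\bgam_{s,t}(\beta)\in\mathcal B^2_{k_0}$ realizing \eqref{eq:sigma_gamma}. The content of the lemma is then to identify that vector with the claimed ellipse parametrisation. Reading off $\bh$, this amounts to
\[
\bgam_{s,t}(\beta)=P\bigl(R_s^\top\bsigma_{s,t}(\beta)\bigr),
\qquad\text{where }P(x_1,x_2,x_3)^\top=(x_1,x_2)^\top,
\]
so I only need to transport the formula \eqref{eq:YsYt_param} by $R_s^\top$ and project.

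First I would apply $R_s^\top$ to the basis vectors \eqref{eq:YsYt_param:basis}. Using $R_s^\top\be^3=\be^3$ is \emph{not} available, but the identity $Q^\top(u\times v)=Q^\top u\times Q^\top v$ for $Q\in\SO$ is, so
\[
R_s^\top\bv_{s,t}^1=\frac{\be^3+R_s^\top R_t\be^3}{\|R_s\be^3+R_t\be^3\|},
\qquad
R_s^\top\bv_{s,t}^2=\frac{\be^3\times R_s^\top R_t\be^3}{\|R_s\be^3\times R_t\be^3\|}.
\]
Projecting, the $\be^3$ summand is killed, so $P(R_s^\top\bv_{s,t}^1)=P(R_s^\top R_t\be^3)/\|R_s\be^3+R_t\be^3\|$, which combined with $a_{s,t}=\tfrac{k_0}{2}\|R_s\be^3+R_t\be^3\|$ yields exactly $a_{s,t}P(R_s^\top\bv_{s,t}^1)=\tilde a_{s,t}\bw_{s,t}^1$. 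This gives the first term of \eqref{eq:gamma}.

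The slightly delicate bookkeeping step, and the place I expect a sign/normalisation slip to be easiest to make, is the second coefficient: the formula \eqref{eq:gamma} has $a_{s,t}$ (not $\tilde a_{s,t}$) in front of $\sin\beta\,\bw_{s,t}^2$. To see this works out, write $R_s^\top R_t\be^3=(v_1,v_2,v_3)^\top$ so that
\[
P(R_s^\top R_t\be^3)=(v_1,v_2)^\top,
\qquad
\be^3\times R_s^\top R_t\be^3=(-v_2,v_1,0)^\top,
\]
hence $\|\be^3\times R_s^\top R_t\be^3\|=\sqrt{v_1^2+v_2^2}=\|P(R_s^\top R_t\be^3)\|$, and moreover $\|R_s\be^3\times R_t\be^3\|=\|\be^3\times R_s^\top R_t\be^3\|$ by orthogonality of $R_s$. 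Therefore
\[
a_{s,t}\,P(R_s^\top\bv_{s,t}^2)
=\frac{k_0}{2}\|R_s\be^3+R_t\be^3\|\,
\frac{P(\be^3\times R_s^\top R_t\be^3)}{\|P(\be^3\times R_s^\top R_t\be^3)\|}
=a_{s,t}\,\bw_{s,t}^2,
\]
which is exactly the second term of \eqref{eq:gamma}. Combining both contributions gives $\bgam_{s,t}(\beta)=P(R_s^\top\bsigma_{s,t}(\beta))$ in the claimed form, and $\bgam_{s,t}(\beta)\in\mathcal B^2_{k_0}$ is automatic from $\bsigma_{s,t}(\beta)\in\mathcal H_s$ together with the bijectivity of $\bh$.

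Finally, identity \eqref{eq:commonCircleIdentity} is immediate: the last clause of \autoref{th:YsYt_param} gives $\bsigma_{s,t}(\beta)=\bsigma_{t,s}(-\beta)$, and by \eqref{eq:sigma_gamma} applied to both sides this reads $R_s\bh(\bgam_{s,t}(\beta))=R_t\bh(\bgam_{t,s}(-\beta))$. The only real obstacle is the normalisation in the $\bw_{s,t}^2$ coefficient sketched above; everything else is an algebraic transport of \autoref{th:YsYt_param} through $R_s^\top$ and $P$.
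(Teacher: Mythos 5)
Your proposal is correct and follows essentially the same route as the paper's proof: invert $R_s\bh$ on $\mathcal H_s$ via $\bk=P(R_s^\top\bx)$, transport the basis vectors $\bv_{s,t}^1,\bv_{s,t}^2$ through $R_s^\top$ and $P$, and read off the coefficients, with \eqref{eq:commonCircleIdentity} following from $\bsigma_{s,t}(\beta)=\bsigma_{t,s}(-\beta)$. Your explicit check that $\norm{R_s\be^3\times R_t\be^3}=\norm{P(\be^3\times R_s^\top R_t\be^3)}$, which justifies the unscaled coefficient $a_{s,t}$ in front of $\bw_{s,t}^2$, is a detail the paper leaves implicit, and it is handled correctly.
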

\begin{figure}[!ht]
  \centering\includegraphics[width=.6\textwidth]{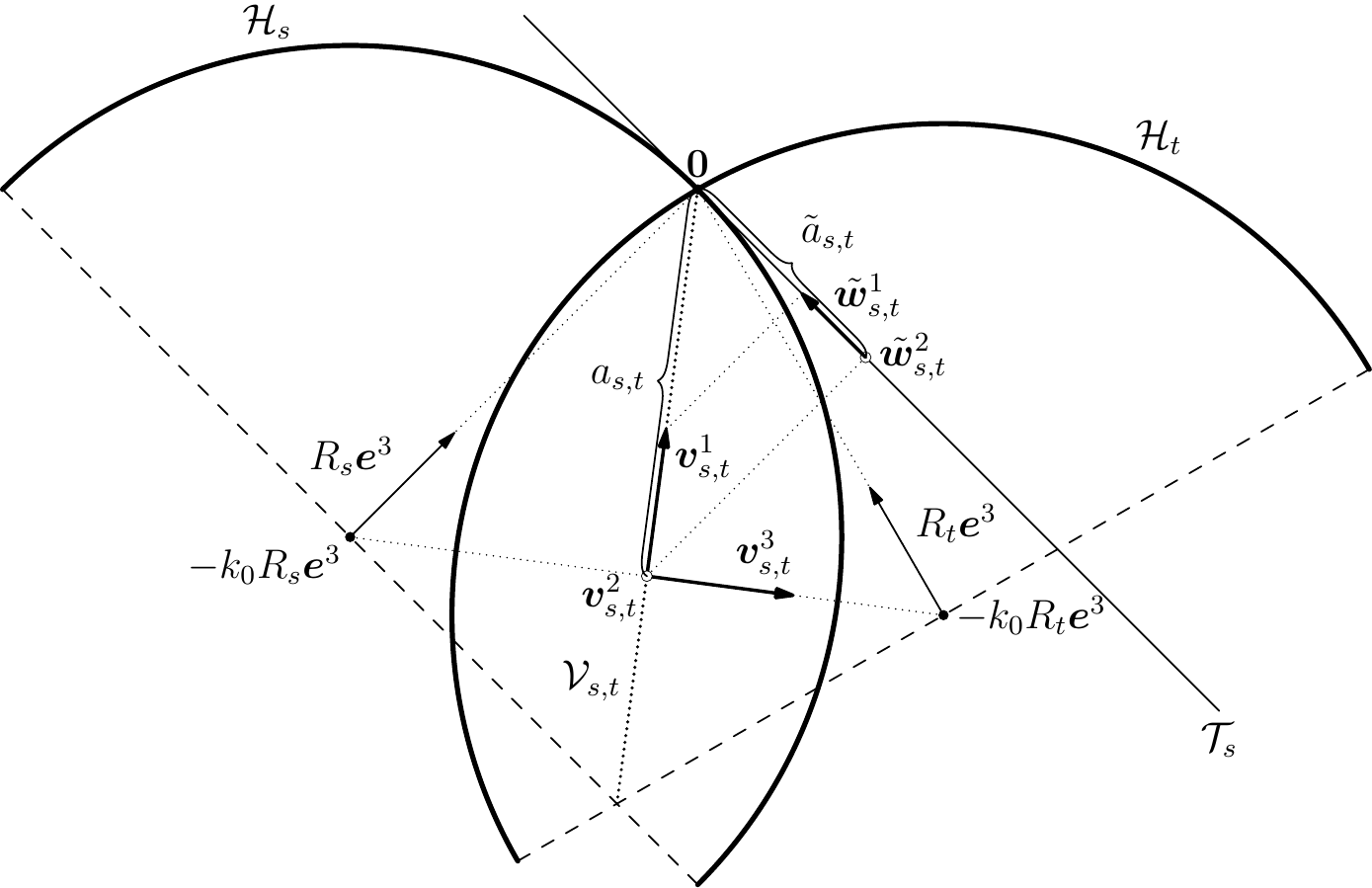}
  \caption{Intersection of two hemispheres $\mathcal H_s$ and $\mathcal H_t$ in the plane through the centers of the hemispheres and the origin. The circular arc $\mathcal H_s\cap\mathcal H_t$ lies in the plane $\mathcal V_{s,t}=\{\bx\in\R^3: \inn{\bx,(R_s-R_t)\be^3}=0\}$ perpendicular to the line between the centers. 
It is spanned by $\bv_{s,t}^j$, $j=1,2$ in \autoref{th:YsYt_param}.
The basis $\bw_{s,t}^j$, $j=1,2$ of $\R^2$ in \autoref{th:gamma} is illustrated by the orthogonal projection 
$\tilde{\bw}_{s,t}^j$ of $\bv_{s,t}^j$ to the tangent plane $\mathcal T_s$ of $\mathcal H_s$ at $\zb0$. 
They are explicitly related by $\bw_{s,t}^j=P(R_s^\top\tilde{\bw}_{s,t}^j)$.
  }\label{fig:intersection_basis}
\end{figure}

Finally, we want to express $\bgam_{s,t}$ in terms of the Euler angles of rotation matrix $R_s^\top R_t=(R_t^\top R_s)^\top$.
Recall that every rotation matrix in $\mathrm{SO}(3)$ can be written (in the $z$-$y$-$z$ convention) in the form
\[ Q^{(3)}(\varphi)\,Q^{(2)}(\theta)\,Q^{(3)}(\psi) \]
with the Euler angles $\varphi,\psi\in\R/(2\pi\Z)$ and $\theta\in[0,\pi]$, where $Q^{(2)}$ and $Q^{(3)}$ denote the rotation matrices
\[ 
Q^{(2)}(\alpha) \coloneqq \begin{pmatrix}\cos(\alpha)&0&\sin(\alpha)\\0&1&0\\-\sin(\alpha)&0&\cos(\alpha)\end{pmatrix}
\quad\text{and}\quad 
Q^{(3)}(\alpha) \coloneqq \begin{pmatrix}\cos(\alpha)&-\sin(\alpha)&0\\\sin(\alpha)&\cos(\alpha)&0\\0&0&1\end{pmatrix},
\]
around the $\be^2$ and $\be^3$ axis, respectively. The Euler angles are uniquely defined if we set $\psi=0$ for $\theta\in\{0,\pi\}$.

\begin{proposition}[Representation of $\bgam_{s,t}$ via the Euler angles of $R_s^\top R_t$]\label{th:Euler}
Let $s,t\in[0,T]$ such that $R_s\be^3\neq \pm R_t\be^3$ and let $(\varphi,\theta,\psi)\in(\R/(2\pi\Z))\times[0,\pi]\times(\R/(2\pi\Z))$ be the Euler angles of the rotation $R_s^\top R_t$, i.e,
\begin{equation} \label{eq:euler} 
R_s^\top R_t = Q^{(3)}(\varphi)\,Q^{(2)}(\theta)\,Q^{(3)}(\psi). 
\end{equation}
Then the elliptic arc $\bgam_{s,t}$ from \eqref{eq:gamma} is given in terms of the Euler angles by
$\bgam_{s,t}(\beta) = \bgam^{\varphi,\theta}(\beta)$, where
\begin{equation}\label{eq:gammaEuler}
\bgam^{\varphi,\theta}(\beta) \coloneqq \frac{k_0}2\sin(\theta)(\cos(\beta)-1)\begin{pmatrix}\cos(\varphi)\\
\sin(\varphi)\end{pmatrix}+k_0\cos(\tfrac\theta2)\sin(\beta)\begin{pmatrix}-\sin(\varphi)\\\cos(\varphi)\end{pmatrix},
\quad\beta\in J_{s,t}.
\end{equation}
\end{proposition}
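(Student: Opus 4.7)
The plan is a direct computation: starting from \autoref{th:gamma}, substitute the Euler-angle expression \eqref{eq:euler} into the formulas \eqref{eq:gamma:basis} for $\bw_{s,t}^1,\bw_{s,t}^2$ and into the expressions for $\tilde a_{s,t}$ and $a_{s,t}$, and verify that the result matches \eqref{eq:gammaEuler}.

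The crucial observation is that every object appearing in $\bgam_{s,t}$ is built from $R_s^\top R_t\be^3$ (or from the inner product $\langle R_s\be^3,R_t\be^3\rangle=\langle\be^3,R_s^\top R_t\be^3\rangle$), so since $Q^{(3)}(\psi)\be^3=\be^3$ the third Euler angle $\psi$ drops out and we only need
\[ R_s^\top R_t\be^3 = Q^{(3)}(\varphi)\,Q^{(2)}(\theta)\be^3 = \bigl(\cos(\varphi)\sin(\theta),\,\sin(\varphi)\sin(\theta),\,\cos(\theta)\bigr)^\top. \]
Applying the projection $P$ yields $P(R_s^\top R_t\be^3)=\sin(\theta)(\cos(\varphi),\sin(\varphi))^\top$, whose norm is $\sin(\theta)$ because $\theta\in[0,\pi]$; hence $\bw_{s,t}^1=(\cos(\varphi),\sin(\varphi))^\top$ and $\tilde a_{s,t}=\tfrac{k_0}{2}\sin(\theta)$. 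A similar direct calculation, using $\be^3\times\bx=(-x_2,x_1,0)^\top$, gives $P(\be^3\times R_s^\top R_t\be^3)=\sin(\theta)(-\sin(\varphi),\cos(\varphi))^\top$ and thus $\bw_{s,t}^2=(-\sin(\varphi),\cos(\varphi))^\top$.

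For the remaining coefficient $a_{s,t}$, I would use the half-angle identity: since $\langle R_s\be^3,R_t\be^3\rangle=\cos(\theta)$, we have
\[ \norm{R_s\be^3+R_t\be^3}^2=2+2\cos(\theta)=4\cos^2(\tfrac{\theta}{2}), \]
and therefore $a_{s,t}=\tfrac{k_0}{2}\cdot 2\cos(\tfrac{\theta}{2})=k_0\cos(\tfrac{\theta}{2})$ (using $\theta/2\in[0,\pi/2]$). Substituting the four quantities $\tilde a_{s,t}$, $a_{s,t}$, $\bw_{s,t}^1$, $\bw_{s,t}^2$ into \eqref{eq:gamma} reproduces \eqref{eq:gammaEuler} term by term, completing the proof.

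There is no real obstacle here; the only thing to watch is the correct choice of sign for the square roots when simplifying $\norm{P(R_s^\top R_t\be^3)}=|\sin\theta|$ and $\norm{R_s\be^3+R_t\be^3}=2|\cos(\theta/2)|$, for which the restrictions $\theta\in[0,\pi]$ and $\theta/2\in[0,\pi/2]$ guarantee nonnegativity. The hypothesis $R_s\be^3\neq\pm R_t\be^3$ ensures $\theta\in(0,\pi)$, so $\sin\theta>0$ and the normalizations in \eqref{eq:gamma:basis} are well defined.
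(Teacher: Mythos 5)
Your proof is correct and follows essentially the same route as the paper: compute $R_s^\top R_t\be^3=(\cos(\varphi)\sin(\theta),\sin(\varphi)\sin(\theta),\cos(\theta))^\top$ (noting $\psi$ drops out), derive $\tilde a_{s,t}=\tfrac{k_0}2\sin(\theta)$, $a_{s,t}=k_0\cos(\tfrac\theta2)$, $\bw_{s,t}^1=(\cos(\varphi),\sin(\varphi))^\top$, $\bw_{s,t}^2=(-\sin(\varphi),\cos(\varphi))^\top$, and substitute into \eqref{eq:gamma}. Your extra remarks on the sign of the square roots are a welcome clarification but not a departure from the paper's argument.
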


\begin{figure}[!ht]
  \centering\includegraphics[width=0.99\textwidth]{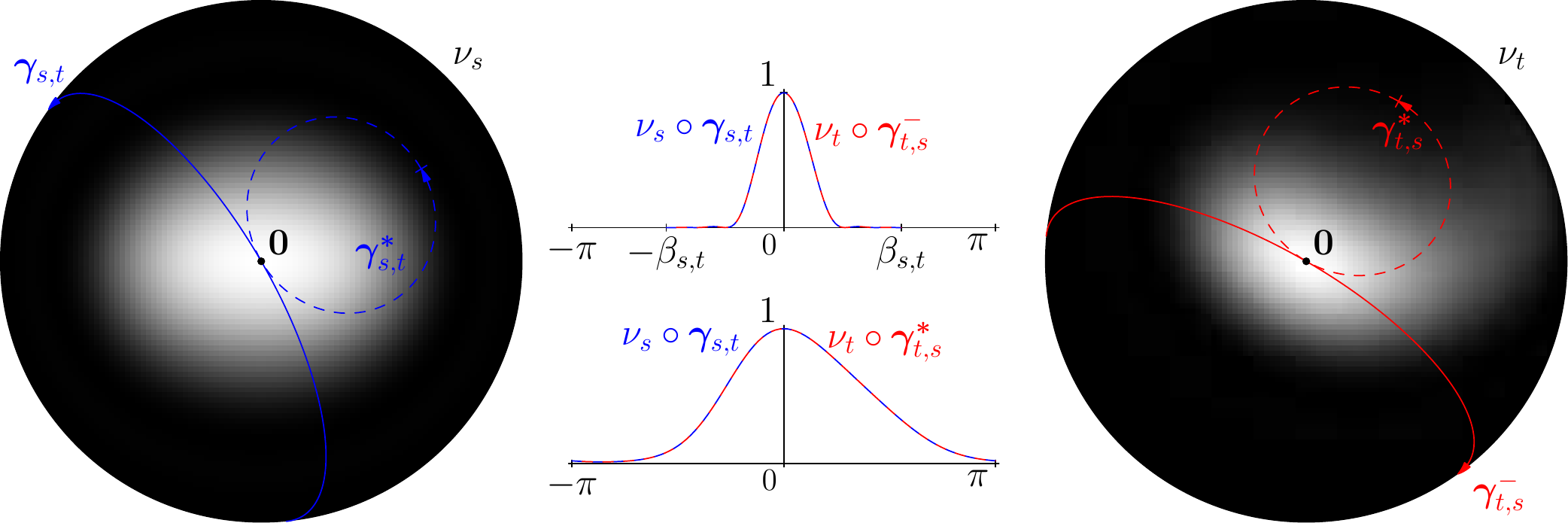}
  \caption{
    Scaled squared energies $\nu_s$ (left) and $\nu_t$ (right), see \eqref{eq:nu}, for a characteristic function~$f$ of an ellipsoid in $\R^3$. 
    For the relative rotation 
    $R_s^\top R_t = Q^{(3)}(\frac\pi6)Q^{(2)}(\frac\pi4)Q^{(3)}(\frac{2\pi}3)$, we show the paths of the corresponding two elliptic arcs $\bgam_{s,t}$ and $\bgam_{t,s}$ (solid lines), where $\bgam_{t,s}^-$ denotes the reversed elliptic arc 
		$\bgam_{t,s}^-(\beta)\coloneqq\bgam_{t,s}(-\beta)$, and their dual arcs $\zb{\bgam}^*_{s,t}$ and $\zb{\bgam}^*_{t,s}$ (dashed), cf.\ \autoref{th:dualCircle}. The relations \eqref{eq:commonCircleIdentity} and \eqref{eq:commonCircleIdentityDual} are verified in the center:
    The top plot shows that the graphs of the scaled squared energy $\nu_s\circ\bgam_{s,t}(\beta)$ along the elliptic arcs in blue and $\nu_t\circ \bgam_{t,s}^-(\beta)$ in red coincide for all $\beta$.
    The same can be seen in the bottom for the dual arcs.
  }\label{fig:ellipticArcs}
\end{figure}

Since the scattering potential $f$ is real-valued, its Fourier transform fulfills the symmetry property
\begin{equation}\label{eq:symmetryNu}
  \ktran[f](-\by) = \overline{\ktran[f](\by)}\quad\text{for all}\quad\by\in\R^3,
\end{equation}
which is also known as Friedel's law.
Thus we see analogously to \eqref{match} that $\nu_s(\zb{k}^*_{s,t}) = \nu_t(\zb{k}^*_{t,s})$ for all pairs $(\bk^*_{s,t},\bk^*_{t,s})\in\mathcal B^2_{k_0}\times\mathcal B^2_{k_0}$
satisfying the \emph{``dual'' condition} to \eqref{eq:comm_circ_cond}, i.e.,
\begin{equation} \label{eq:dualK} 
  R_s\bh(\zb{k}^*_{s,t}) = -R_t\bh(\zb{k}^*_{t,s})
\end{equation}
The parameterization 
can be handled in a similar way. 
To this end, we define the reflected hemisphere $-\mathcal H_t=\{-\bx:\bx\in\mathcal H_t\}$ and
summarize the results in the following proposition. A graphical illustration is given in \autoref{fig:ellipticArcs}.

\begin{proposition}[Parameterization in dual case]\label{th:dualCircle}
Let $s,t\in[0,T]$ such that $R_s\be^3\ne R_t\be^3$.
\\
(i)
It holds $\mathcal H_s\cap(-\mathcal H_t)=\{\zb{\sigma}^*_{s,t}(\beta):\beta\in J^*_{s,t}\}$ with the curve
$\zb{\sigma}^*_{s,t}\colon J^*_{s,t}\to\R^3$ defined by
\begin{equation}\label{eq:dualCircle}
\zb{\sigma}^*_{s,t}(\beta)\coloneqq a^*_{s,t}(\cos(\beta)-1)\bv_{s,t}^3- a^*_{s,t}\sin(\beta)\bv_{s,t}^2,
\end{equation}
where 
$$a^*_{s,t}\coloneqq\frac{k_0}2\norm{R_s\be^3-R_t\be^3}$$
and with the interval
\begin{equation}\label{eq:dualCircleAngle}
J^*_{s,t}\coloneqq\begin{cases}(-\pi,\pi]
&\text{if }\langle R_s\be^3,R_t\be^3\rangle\ge0,\\(-\beta^*_{s,t},\beta^*_{s,t})
&\text{if }\langle R_s\be^3,R_t\be^3\rangle<0\end{cases}
\quad\text{with}\quad
\beta^*_{s,t}\coloneqq\arccos\left(\frac{\langle R_s\be^3,R_t\be^3\rangle+1}{\langle R_s\be^3,R_t\be^3\rangle-1}\right).
\end{equation}
(ii) Further, we have
\begin{equation}\label{eq:checkSigma_checkGamma}
\zb{\sigma}^*_{s,t}(\beta) = R_s\bh(\zb{\gamma}^*_{s,t}(\beta))\quad\text{for all}\quad\beta\in J^*_{s,t}
\end{equation}
with the  elliptic arc $\bgam^*_{s,t}\colon J^*_{s,t}\to\mathcal B^2_{k_0}$ determined by
\begin{equation}\label{eq:dualEllipse}
\zb{\gamma}^*_{s,t}(\beta) \coloneqq -\tilde a_{s,t}(\cos(\beta)-1)\bw_{s,t}^1- a^*_{s,t}\sin(\beta)\bw_{s,t}^2.
\end{equation}
(iii)
Let $(\varphi,\theta,\psi)\in(\R/(2\pi\Z))\times[0,\pi]\times(\R/(2\pi\Z))$ denote the Euler angles 
of the rotation $R_s^\top R_t$, see \eqref{eq:euler}.
Then the elliptic arc $\zb{\gamma}^*_{s,t}$ has the form
$\zb{\gamma}^*_{s,t}(\beta) = \zb{\gamma}^{*,\varphi,\theta}(\beta) $, where
\begin{equation}\label{eq:dualEllipseEuler}
\zb{\gamma}^{*,\varphi,\theta}(\beta)\coloneqq -\frac{k_0}2\sin(\theta)(\cos(\beta)-1)\begin{pmatrix}\cos(\varphi)\\\sin(\varphi)\end{pmatrix}-k_0\sin(\tfrac\theta2)\sin(\beta)\begin{pmatrix}-\sin(\varphi)\\\cos(\varphi)\end{pmatrix}.
\end{equation}
In particular, it holds
\begin{equation} \label{eq:commonCircleIdentityDual}
R_s\bh(\zb{\gamma}^*_{s,t}(\beta)) = -R_t\bh(\zb{\gamma}^*_{t,s}(\beta)) \quad \text{for all } \beta\in J^*_{s,t}.
\end{equation}
\end{proposition}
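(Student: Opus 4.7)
The plan is to imitate the proof of \autoref{th:YsYt_param} and \autoref{th:gamma}, exploiting that the dual case only differs by replacing $\mathcal H_t$ by its point-reflection $-\mathcal H_t$. By \eqref{eq:Ht-sphere}, $-\mathcal H_t$ is the open hemisphere on the sphere $\partial \mathcal B^3_{k_0}(+k_0 R_t\be^3)$ that is cut off by $\langle \bx, R_t\be^3\rangle < k_0$. So in (i) I intersect two full spheres of radius $k_0$ with centers $-k_0 R_s\be^3$ and $+k_0 R_t\be^3$: they meet in the plane perpendicular to the line of centers, i.e.\ perpendicular to $R_s\be^3+R_t\be^3$, hence in the plane $\mathrm{span}\{\bv^2_{s,t},\bv^3_{s,t}\}$. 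The half-distance of centers is $\tfrac{k_0}{2}\|R_s\be^3+R_t\be^3\|$, so the intersection circle has radius $\sqrt{k_0^2-\tfrac{k_0^2}{4}\|R_s\be^3+R_t\be^3\|^2}=a^*_{s,t}$. Since $\zb0=R_s\bh(\zb0)=-R_t\bh(\zb0)$ belongs to both hemispheres, the circle passes through the origin, so its center is at $-a^*_{s,t}\bv^3_{s,t}$; writing the circle as $-a^*_{s,t}\bv^3_{s,t}+a^*_{s,t}(\cos\beta\,\bv^3_{s,t}-\sin\beta\,\bv^2_{s,t})$ reproduces \eqref{eq:dualCircle}. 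The interval $J^*_{s,t}$ is then pinned down by substituting $\zb{\sigma}^*_{s,t}(\beta)$ into the two open-hemisphere inequalities $\langle\bx,R_s\be^3\rangle>-k_0$ and $\langle\bx,R_t\be^3\rangle<k_0$; using $R_s\be^3=\tfrac{a_{s,t}}{k_0}\bv^1_{s,t}+\tfrac{a^*_{s,t}}{k_0}\bv^3_{s,t}$ (and similarly for $R_t\be^3$ with opposite sign on $\bv^3$), both reduce to the single condition $\cos\beta>\tfrac{c+1}{c-1}$ with $c=\langle R_s\be^3,R_t\be^3\rangle$, which matches~\eqref{eq:dualCircleAngle}.

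\textbf{Part (ii).} The map $\bh\colon\mathcal B^2_{k_0}\to\mathcal H_0$ is a bijection onto the open hemisphere on $\partial\mathcal B^3_{k_0}(-k_0\be^3)$ with inverse given by the projection $P$ onto the first two coordinates. So $\zb{\gamma}^*_{s,t}(\beta)=P(R_s^\top\zb{\sigma}^*_{s,t}(\beta))$ is forced, and what remains is to rewrite $P(R_s^\top\bv^3_{s,t})$ and $P(R_s^\top\bv^2_{s,t})$ in the basis $\bw^1_{s,t},\bw^2_{s,t}$. Using $R_s^\top(R_s\be^3-R_t\be^3)=\be^3-R_s^\top R_t\be^3$, I get $P(R_s^\top\bv^3_{s,t})=-P(R_s^\top R_t\be^3)/\|R_s\be^3-R_t\be^3\|$, and the scalar that appears when multiplying by $a^*_{s,t}(\cos\beta-1)$ simplifies—via $\|R_s\be^3-R_t\be^3\|^2=2-2c$, $\|R_s\be^3+R_t\be^3\|^2=2+2c$, and $\|P(R_s^\top R_t\be^3)\|^2=1-c^2$—to exactly $-\tilde a_{s,t}$. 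For the second basis vector I use the rotation identity $R_s^\top(R_s\be^3\times R_t\be^3)=\be^3\times R_s^\top R_t\be^3$ together with the direct calculation $\be^3\times(w_1,w_2,0)^\top=(-w_2,w_1,0)^\top$, which gives $P(R_s^\top\bv^2_{s,t})=\bw^2_{s,t}$ directly. Putting the two terms back together yields~\eqref{eq:dualEllipse}.

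\textbf{Part (iii) and the identity \eqref{eq:commonCircleIdentityDual}.} Once $R_s^\top R_t$ is written in Euler angles \eqref{eq:euler}, I use $Q^{(3)}(\psi)\be^3=\be^3$ to obtain $R_s^\top R_t\be^3=(\sin\theta\cos\varphi,\sin\theta\sin\varphi,\cos\theta)^\top$. This immediately gives $\bw^1_{s,t}=(\cos\varphi,\sin\varphi)^\top$, $\bw^2_{s,t}=(-\sin\varphi,\cos\varphi)^\top$, $\tilde a_{s,t}=\tfrac{k_0}{2}\sin\theta$, and $c=\cos\theta$; the latter turns $a^*_{s,t}=\tfrac{k_0}{2}\sqrt{2-2\cos\theta}$ into $k_0\sin(\theta/2)$ on $[0,\pi]$, and substitution into \eqref{eq:dualEllipse} produces \eqref{eq:dualEllipseEuler}. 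Finally, for \eqref{eq:commonCircleIdentityDual} I note that $a^*_{s,t}=a^*_{t,s}$, while $\bv^3_{t,s}=-\bv^3_{s,t}$ and $\bv^2_{t,s}=-\bv^2_{s,t}$ are both antisymmetric under swapping $s\leftrightarrow t$; thus $\zb{\sigma}^*_{t,s}(\beta)=-\zb{\sigma}^*_{s,t}(\beta)$, and combining with \eqref{eq:checkSigma_checkGamma} applied to both $(s,t)$ and $(t,s)$ yields the claim.

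\textbf{Main obstacle.} The only real danger is sign bookkeeping: the minus signs in \eqref{eq:dualCircle} and \eqref{eq:dualEllipse} are easy to lose, and one has to be careful that the radius identity $a^*_{s,t}\sqrt{(1+c)/2}=\tilde a_{s,t}$—which is what converts the $\bv^3$-coefficient into the $\bw^1$-coefficient—is traced through correctly. Apart from this, the argument is a direct translation of the proofs of Lemmas~\ref{th:YsYt_param}--\ref{th:gamma} with $R_t$ replaced by $-R_t$ at the sphere-center level.
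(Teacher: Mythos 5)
Your proposal is correct and follows essentially the same route as the paper: part (i) is the paper's substitution $R_t\be^3\mapsto -R_t\be^3$ in \autoref{th:YsYt_param} carried out explicitly as a sphere--sphere intersection, part (ii) reproduces the computation of $P(R_s^\top\bv_{s,t}^3)=-\tfrac{\tilde a_{s,t}}{a^*_{s,t}}\bw_{s,t}^1$ from the paper's proof, and part (iii) matches the Euler-angle substitution with $a^*_{s,t}=k_0\sin(\tfrac\theta2)$. The sign bookkeeping and the antisymmetry argument $\bsigma^*_{t,s}(\beta)=-\bsigma^*_{s,t}(\beta)$ for \eqref{eq:commonCircleIdentityDual} all check out.
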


So far, we have excluded the cases $R_s\be^3=\pm R_t\be^3$. 
The case $R_s\be^3=\pm R_t\be^3$ corresponds with a rotation in $x_1$ $x_2$ plane,
while the other contains an additional rotation of 180\,\textdegree.
The following proposition shows that these constellations 
can easily be detected. 
To this end, we define the matrices
\begin{equation} \label{eq:RS}
\mathrm{S}\coloneqq\begin{pmatrix}1&0\\0&-1\end{pmatrix}
\qquad\text{and}\qquad
  \mathrm{Q}(\alpha) \coloneqq\begin{pmatrix}\cos(\alpha)&-\sin(\alpha)\\\sin(\alpha)&\cos(\alpha)\end{pmatrix}, \quad \alpha\in\R.
 \end{equation}

\begin{proposition}[Special cases $R_s\be^3=\pm R_t\be^3$]\label{th:commonAxis}
(i)
Let $s,t\in[0,T]$ such that $R_s\be^3=R_t\be^3$. 
Then we have $R_s^\top R_t=Q^{(3)}(\alpha)$ for some $\alpha\in\R/(2\pi\Z)$ and
\begin{equation}\label{eq:commonAxis}
\nu_s(\mathrm{Q}(\alpha)\bk) = \nu_t(\bk)
\quad\text{for all}\quad\bk\in\mathcal B_{k_0}^2.
\end{equation}
(ii)
Let $s,t\in[0,T]$ such that $R_s\be^3=-R_t\be^3$. 
Then we have $R_s^\top R_t=Q^{(2)}(\pi)Q^{(3)}(\alpha)$ 
for some $\alpha\in\R/(2\pi\Z)$ and
\begin{equation}\label{eq:commonAxisReflection}
\nu_s(\mathrm{S} \mathrm{Q}(\alpha)\bk) = \nu_t(\bk)
\quad \text{for all} \quad\bk\in\mathcal B_{k_0}^2.
\end{equation}
\end{proposition}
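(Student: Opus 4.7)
The plan is to exploit the explicit structure that the condition $R_s\be^3 = \pm R_t\be^3$ imposes on $R_s^\top R_t$ together with the compatibility between the rotations $Q^{(3)}(\alpha)$ in $\R^3$ and their planar counterparts $\mathrm Q(\alpha)$, followed by a direct application of \eqref{eq:nu-F} (and, for part (ii), also of Friedel's law \eqref{eq:symmetryNu}).

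For part (i), the first step is to observe that $R_s^\top R_t \be^3 = R_s^\top (R_s\be^3) = \be^3$, so $R_s^\top R_t$ is a rotation fixing the $\be^3$-axis and must therefore equal $Q^{(3)}(\alpha)$ for some $\alpha \in \R/(2\pi\Z)$. The main computational step is then the identity
\begin{equation}
Q^{(3)}(\alpha)\,\bh(\bk) = \bh(\mathrm Q(\alpha)\bk) \quad\text{for all}\quad \bk \in \mathcal B^2_{k_0},
\end{equation}
which follows since $Q^{(3)}(\alpha)$ acts as $\mathrm Q(\alpha)$ on the first two components and trivially on the third, while $\kappa(\mathrm Q(\alpha)\bk)=\kappa(\bk)$ because $\mathrm Q(\alpha)$ preserves the Euclidean norm. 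Combining these two facts yields $R_s\,\bh(\mathrm Q(\alpha)\bk) = R_s Q^{(3)}(\alpha)\,\bh(\bk) = R_t\,\bh(\bk)$, and \eqref{eq:nu-F} immediately gives \eqref{eq:commonAxis}.

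For part (ii), I would first note that $Q^{(2)}(\pi)\be^3 = -\be^3$, so $R_s^\top R_t\be^3 = -\be^3$ forces $Q^{(2)}(-\pi) R_s^\top R_t$ to fix $\be^3$, yielding the asserted decomposition $R_s^\top R_t = Q^{(2)}(\pi)Q^{(3)}(\alpha)$. The key computation is then to verify
\begin{equation}
R_s\,\bh(\mathrm S\,\mathrm Q(\alpha)\bk) = -R_t\,\bh(\bk).
\end{equation}
Writing $\mathrm Q(\alpha)\bk = (q_1,q_2)^\top$, one uses the identity from part (i) to get $R_s^\top R_t\,\bh(\bk) = Q^{(2)}(\pi)(q_1,q_2,\kappa(\bk)-k_0)^\top = (-q_1, q_2, k_0-\kappa(\bk))^\top$, while $\bh(\mathrm S\mathrm Q(\alpha)\bk) = (q_1,-q_2,\kappa(\bk)-k_0)^\top$, and these are negatives of each other.

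Finally, \eqref{eq:commonAxisReflection} follows from \eqref{eq:nu-F} together with Friedel's law: $\nu_s(\mathrm S\mathrm Q(\alpha)\bk) = |\ktran[f](R_s\bh(\mathrm S\mathrm Q(\alpha)\bk))|^2 = |\ktran[f](-R_t\bh(\bk))|^2 = |\ktran[f](R_t\bh(\bk))|^2 = \nu_t(\bk)$ by \eqref{eq:symmetryNu}. The only slightly delicate point is the bookkeeping of signs in case (ii) and remembering that Friedel's law is what bridges the sign discrepancy; once the identity $\bh \circ Q^{(3)}(\alpha) = Q^{(3)}(\alpha)\circ \bh$ is set up cleanly, both parts reduce to a few lines of matrix multiplication.
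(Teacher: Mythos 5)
Your proposal is correct and follows essentially the same route as the paper's own proof: identify $R_s^\top R_t$ as $Q^{(3)}(\alpha)$ (resp.\ $Q^{(2)}(\pi)Q^{(3)}(\alpha)$) from the axis condition, establish $Q^{(3)}(\alpha)\bh(\bk)=\bh(\mathrm Q(\alpha)\bk)$ and its signed analogue $Q^{(2)}(\pi)\bh(\mathrm Q(\alpha)\bk)=-\bh(\mathrm{S}\mathrm Q(\alpha)\bk)$, and conclude via \eqref{eq:nu-F} and Friedel's law \eqref{eq:symmetryNu}. The only difference is that you spell out the componentwise sign bookkeeping in part (ii) slightly more explicitly than the paper does.
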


Finally,  we can use our findings to formulate our main theorem 
which says that under certain conditions
the Euler angles of $R_s^\top R_t$
can be determined from the matching condition \eqref{match}.

\begin{theorem}[Reconstruction of Euler angles] \label{th:commonCircle}
Let $s,t\in[0,T]$ such that neither \eqref{eq:commonAxis} nor \eqref{eq:commonAxisReflection} hold for any $\alpha\in\R/(2\pi\Z)$.
Furthermore, assume that there exist unique angles $\varphi,\psi \in \R/(2\pi\Z)$ 
and $\theta\in [0,\pi]$ 
such that
\begin{alignat}{5}
  \nu_s(\bgam^{\varphi,\theta}(\beta)) 
  &= 
  \nu_t(\bgam^{\pi-\psi,\theta}(-\beta))
  \quad&&\text{for all}\quad \beta\in [-\tfrac\pi2,\tfrac\pi2]\quad\text{and}
  \label{eq:nuGamma}
  \\
  \nu_s(\bgam^{*,\varphi,\theta}(\beta)) 
  &= 
  \nu_t(\bgam^{*,\pi-\psi,\theta}(\beta))
  \quad&&\text{for all}\quad \beta\in [-\tfrac\pi2,\tfrac\pi2],
  \label{eq:nuGammaDual}
\end{alignat}
where $\bgam^{\varphi,\theta}$ and $\bgam^{*,\varphi,\theta}$ are defined in \eqref{eq:gammaEuler} and \eqref{eq:dualEllipseEuler}, respectively.
Then we have
\begin{equation}\label{eq:commonCircle}
  R_s^\top R_t = Q^{(3)}(\varphi) Q^{(2)}(\theta) Q^{(3)}(\psi).
\end{equation}
\end{theorem}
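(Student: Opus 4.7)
The plan is to show that the \emph{true} Euler angles of the relative rotation $R_s^\top R_t$ already satisfy \eqref{eq:nuGamma} and \eqref{eq:nuGammaDual}; then the uniqueness hypothesis of the theorem forces $(\varphi,\theta,\psi)$ to coincide with them, whence \eqref{eq:commonCircle} follows directly from the definition of Euler angles.

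\emph{Step 1 (setup).} Denote by $(\tilde\varphi,\tilde\theta,\tilde\psi)$ the Euler angles of $R_s^\top R_t$, which are well defined because we have excluded the degenerate configurations $R_s\be^3=\pm R_t\be^3$ of Proposition~\ref{th:commonAxis}; in particular $\tilde\theta\in(0,\pi)$. Proposition~\ref{th:Euler} then gives $\bgam_{s,t}=\bgam^{\tilde\varphi,\tilde\theta}$ and Proposition~\ref{th:dualCircle}(iii) gives $\bgam^*_{s,t}=\bgam^{*,\tilde\varphi,\tilde\theta}$.

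\emph{Step 2 (Euler angles of the inverse rotation).} I first need the Euler decomposition of $R_t^\top R_s=(R_s^\top R_t)^{-1}$. Using the elementary identity $Q^{(2)}(-\theta)=Q^{(3)}(\pi)\,Q^{(2)}(\theta)\,Q^{(3)}(\pi)$, which is a quick matrix check, I compute
\begin{equation*}
R_t^\top R_s=Q^{(3)}(-\tilde\psi)Q^{(2)}(-\tilde\theta)Q^{(3)}(-\tilde\varphi)=Q^{(3)}(\pi-\tilde\psi)\,Q^{(2)}(\tilde\theta)\,Q^{(3)}(\pi-\tilde\varphi),
\end{equation*}
so that the Euler angles of $R_t^\top R_s$ are $(\pi-\tilde\psi,\tilde\theta,\pi-\tilde\varphi)$. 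Applying Proposition~\ref{th:Euler} (respectively \ref{th:dualCircle}(iii)) to the pair $(t,s)$ instead of $(s,t)$ therefore yields $\bgam_{t,s}=\bgam^{\pi-\tilde\psi,\tilde\theta}$ and $\bgam^*_{t,s}=\bgam^{*,\pi-\tilde\psi,\tilde\theta}$.

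\emph{Step 3 (verifying the matching conditions).} Combining \eqref{eq:nu-F} with the common-circle identity \eqref{eq:commonCircleIdentity} immediately gives
\begin{equation*}
\nu_s(\bgam^{\tilde\varphi,\tilde\theta}(\beta))=\abs{\ktran[f](R_s\bh(\bgam_{s,t}(\beta)))}^2=\abs{\ktran[f](R_t\bh(\bgam_{t,s}(-\beta)))}^2=\nu_t(\bgam^{\pi-\tilde\psi,\tilde\theta}(-\beta)).
\end{equation*}
For the dual identity \eqref{eq:commonCircleIdentityDual}, Friedel's symmetry \eqref{eq:symmetryNu} yields $\abs{\ktran[f](-\by)}=\abs{\ktran[f](\by)}$, so the same calculation produces $\nu_s(\bgam^{*,\tilde\varphi,\tilde\theta}(\beta))=\nu_t(\bgam^{*,\pi-\tilde\psi,\tilde\theta}(\beta))$. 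A small side check that $[-\tfrac\pi2,\tfrac\pi2]\subset J_{s,t}\cap J^*_{s,t}$ is needed so that the two equalities above are actually equalities on the domain appearing in \eqref{eq:nuGamma}–\eqref{eq:nuGammaDual}; this is a direct estimate of $\beta_{s,t}$ and $\beta^*_{s,t}$ via \eqref{eq:YsYt_param:angle} and \eqref{eq:dualCircleAngle}, both of which are seen to be at least $\tfrac\pi2$ as long as $\abs{\langle R_s\be^3,R_t\be^3\rangle}<1$, which is exactly our non-degeneracy assumption.

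\emph{Step 4 (uniqueness and conclusion).} The triple $(\tilde\varphi,\tilde\theta,\tilde\psi)$ thus satisfies the two hypotheses \eqref{eq:nuGamma} and \eqref{eq:nuGammaDual}; by the uniqueness assumption of the theorem, $(\varphi,\theta,\psi)=(\tilde\varphi,\tilde\theta,\tilde\psi)$, and \eqref{eq:commonCircle} is just the definition of the Euler angles of $R_s^\top R_t$. The main technical step is really Step~2: one must carefully convert between the Euler decompositions of a rotation and its inverse so that the ``asymmetric'' appearance of $\pi-\psi$ on the right-hand side of \eqref{eq:nuGamma}–\eqref{eq:nuGammaDual} is produced correctly; everything else is a direct application of previously established identities.
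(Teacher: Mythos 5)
Your proposal is correct and follows essentially the same route as the paper's proof: establish the Euler decomposition $R_t^\top R_s=Q^{(3)}(\pi-\tilde\psi)Q^{(2)}(\tilde\theta)Q^{(3)}(\pi-\tilde\varphi)$ via $Q^{(2)}(-\tilde\theta)=Q^{(3)}(\pi)Q^{(2)}(\tilde\theta)Q^{(3)}(\pi)$, identify the elliptic arcs and their duals with $\bgam^{\tilde\varphi,\tilde\theta}$, $\bgam^{\pi-\tilde\psi,\tilde\theta}$, $\bgam^{*,\tilde\varphi,\tilde\theta}$, $\bgam^{*,\pi-\tilde\psi,\tilde\theta}$, verify the matching conditions from \eqref{eq:commonCircleIdentity}, \eqref{eq:commonCircleIdentityDual}, and Friedel symmetry, and invoke the uniqueness hypothesis. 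Your additional check that $[-\tfrac\pi2,\tfrac\pi2]\subset J_{s,t}\cap J^*_{s,t}$ is a welcome detail the paper leaves implicit.
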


Since the curves $\zb{\gamma}^*$ in \eqref{eq:nuGammaDual} are traversed in the same direction 
and the curves $\bgam$ in \eqref{eq:nuGamma} in the opposite direction,
they can be distinguished.
Hence, under the uniqueness assumptions in \autoref{th:commonCircle}, it suffices
if only one of the equations \eqref{eq:nuGamma} or \eqref{eq:nuGammaDual} 
is fulfilled in order to obtain the Euler angles of $R_s^\top R_t$.
One could also obtain a slightly stronger result by replacing the interval $[-\tfrac\pi2,\tfrac\pi2]$ in \autoref{th:commonCircle} by a larger one depending on $\theta$, cf.\ \eqref{eq:YsYt_param:angle} and \eqref{eq:dualCircleAngle}.

Nevertheless, the reconstruction relies on the uniqueness of the elliptic arcs 
with property \eqref{eq:nuGamma}, which might fail if the function $f$ has too much symmetry. 
For example, if $f$ is rotationally invariant, then so is its Fourier transform $\mathcal F[f]$,
and therefore $\nu_s=\nu_t$ for all $s,t\in[0,T]$, which clearly makes it impossible to reconstruct any rotation. 
In the generic case, however, we expect that this problem does not occur and neither does it in our numerical examples.
Our variational model in Section \ref{sec:recon}
exploits both the curves and their dual versions.

We finish this section by two different remarks concerning common lines and common circles.

\begin{remark}[Methods of common circles and common lines]
We can obtain the relative rotation $R_s^\top R_t$, which the object undergoes at only \emph{two} different time steps $t$ and $s$, 
from two data sets $\nu_s$ and $\nu_t$. This is in contrast to the common line method for the ray transform \cite{ElbRitSchSchm20}, which requires to detect the intersection of pairwise common lines in images from at least \emph{three} different rotations in order to calculate the relative rotations, see \autoref{fig:common_line}. 
This is also apparent from the fact that our common circle formulation contains the three Euler angles from the three dimensional manifold $\SO$ as parameters, whereas in the common line method for the ray transform there are only the two parameters parameterizing the lines through the origin. 
\end{remark}

\begin{remark}[Stereographic projection]
  Instead of detecting elliptic arcs in the images $\nu_t$, we may use a different parameterization in which the arcs become straight lines.
  Such a parameterization is provided by the stereographic projection of the hemisphere $\mathcal H_t\setminus\{\zb0\}$ from the origin onto its equatorial plane
  $ \{\bx\in\R^3:\langle\bx,R_t\be^3\rangle = -k_0\}. $
  The stereographic projection maps the common circle \eqref{eq:YsYt_param} to a straight line in the equatorial plane.
  Then we can detect the rotations $R_s^\top R_t$ by finding common lines in the transformed, two-dimensional data.
  The details are provided in \autoref{se:stereo}.
\end{remark}

\section{Infinitesimal Common Circle Method} \label{sec:cont-cc}
%
In this section, we make the additional assumption that the rotations $R_t$ to which the object 
is exposed via the transformation \eqref{eq:motion0} depend smoothly on the time $t\in[0,T]$, i.e.,
we assume that $R\in C^1([0,T]\to\SO)$, where we consider $\SO$ as submanifold of $\R^{3\times3}$. Since the fact that the scattering potential $f$ 
has compact support implies by the Paley--Wiener theorem that $\ktran [f] \in C^\infty(\R^3 \to \C)$, 
we thus have that $\nu_t(\bk)$ is continuously differentiable both in time $t$ and space $\bk$.

In this setting, we can describe the relative rotation $R_s^\top R_t$ between two time steps $s,t\in[0,T]$ 
in the limit $s\to t$ by the derivative $(R_t')^\top R_t=(R_t^\top R_t')^\top$, where $R_t'$ 
denotes the time derivative of $R_t$ at $t\in[0,T]$.
The derivative of the defining identity $R_t^\top R_t = I$ 
with respect to $t$ is given by
$R_t^\top R_t' + (R_t^\top R_t')^\top =0$. 
Hence the \emph{angular velocity matrix} $W_t\coloneqq R_t^\top R_t'$ is skew-symmetric and thus can be described 
by three parameters. The \emph{angular velocity} $\zb\omega_t\colon[0,T]\to\R^3$ of the rotational motion $R_t$, 
cf.\ \cite[Chapter~VI]{LanLif81}, is defined by
\begin{equation}\label{eq:omega}
  R_t^\top R_t'\zb y
  = \zb\omega_t\times \zb y
  \quad \text{for all }
	t\in[0,T],\;\zb y\in\R^3.
\end{equation}
In particular, we have
\begin{equation}\label{eq:W}
  W_t=
  \begin{pmatrix}
    0&-\omega_{t,3}&\omega_{t,2}\\ 	
    \omega_{t,3}&0&-\omega_{t,1}\\ 
    -\omega_{t,2}&\omega_{t,1}&0
  \end{pmatrix}
,\quad\text{where}\quad 
\zb\omega_t \coloneqq (\omega_{t,1},\omega_{t,2},\omega_{t,3})^\top.
\end{equation}
In the following, we want to reconstruct the angular velocity $\zb\omega_t$ of the rigid motion at a time $t\in[0,T]$ from the behavior of the data $\nu$ in the vicinity of the time $t$, more precisely from the first derivatives of $\nu$ at the time $t$. 
We will utilize a similar approach as done for the ray transform in \cite{ElbRitSchSchm20}.

For the reconstruction, it is convenient to express $\zb\omega_t$ in cylindrical coordinates
\begin{equation}\label{eq:omega-decomp}
  \zb\omega_t 
  = \begin{pmatrix} \rho_t\zb\phi_t \\ \zeta_t \end{pmatrix}
  = \begin{pmatrix} \rho_t\phi_{t,1} \\ \rho_t\phi_{t,2} \\ \zeta_t \end{pmatrix}
\end{equation}
with the azimuth direction $\zb\phi_t\in\S^1_+ \coloneqq\{(\cos(\alpha),\sin(\alpha))^\top:\alpha\in[0,\pi)\}$, the cylindrical radius $\rho_t\in\R$, 
and the third component $\zeta_t\in\R$.
Note that, in contrast to conventional cylindrical coordinates, we allow negative radii $\rho_t$, but restrict in exchange $\zb\phi_t$ to $\S^1_+$.

To obtain the reconstruction formula, we consider for values $s,t\in[0,T]$ with $R_s\be^3\ne \pm R_t\be^3$ again the elliptic arcs $\bgam_{s,t}$  in \eqref{eq:gamma} fulfilling the identity
\begin{equation}\label{eq:commonCircleIdentity1}
\nu_s(\bgam_{s,t}(\beta)) = \nu_t(\bgam_{t,s}(-\beta)),
\end{equation}
in \eqref{eq:commonCircleIdentity}. Taking therein for fixed $t\in[0,T]$ the limit $s\to t$, we find that the relation
\begin{equation}\label{eq:commonCircleIdentityInf}
\lim_{s\to t}\frac{\nu_s(\bgam_{s,t}(\beta))-\nu_t(\bgam_{t,s}(-\beta))}{s-t} = 0
\end{equation}
holds, which gives us a relation between the first order derivatives of the function $(t,\bk)\mapsto\nu_t(\bk)$ involving only the angular velocity $\zb\omega_t$ of the rotations at the time $t$. As we will see, this first order part of \eqref{eq:commonCircleIdentity1} contains enough information to recover the angular velocity and therefore the whole rotations. This method can thus be seen as an infinitesimal version of \autoref{th:commonCircle}.
Rewriting the relation \eqref{eq:commonCircleIdentityInf} 
directly by expanding the functions in Taylor series in the variable $s$ around the point $t$ would be rather tedious, similar to the calculation in \cite{ElbRitSchSchm20} for the ray transform.
Therefore, we will simply verify the following lemma via direct computation.

\begin{lemma}[Infinitesimal common circle relation]\label{th:infRel}
Let the rotations $R\in C^1([0,T]\to\SO)$ be continuously differentiable and the associated angular velocities $\zb\omega_t\in \R^3$ be written in cylindrical coordinates \eqref{eq:omega-decomp}.
Then we have for every $r\in(-k_0,k_0)$ and $t\in[0,T]$ the relation
\begin{equation}\label{eq:infRel}
\partial_t\nu_t(r\zb\phi_t)=\left(\rho_t\left(k_0-\sqrt{k_0^2-r^2}\right)+r\zeta_t\right)\left<\nabla\nu_t(r\zb\phi_t),\begin{pmatrix}-\phi_{t,2}\\\phi_{t,1}\end{pmatrix}\right>,
\end{equation}
where $\partial_t\nu_t$ denotes the partial derivative of $\nu_t$ with respect to $t$, and $\nabla \nu_t(\bk)$ the gradient with respect to $\bk$.
\end{lemma}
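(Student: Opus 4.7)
The plan is to verify \eqref{eq:infRel} by direct computation, exploiting the representation $\nu_t(\bk) = G(R_t\bh(\bk))$ with $G\coloneqq\abs{\ktran[f]}^2$ from \eqref{eq:nu-F}. Since $f\in L^p$ has compact support, $\ktran[f]$ (and hence $G$) is smooth, so both the chain rule and the various derivative computations below are justified.

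First, I would differentiate in time. Using $R_t'=R_tW_t$ with $W_t$ as in \eqref{eq:W}, the chain rule gives
\begin{equation*}
\partial_t\nu_t(\bk) = \inner{\nabla G(R_t\bh(\bk))}{R_tW_t\bh(\bk)} = \inner{R_t^\top\nabla G(R_t\bh(\bk))}{\bomega_t\times\bh(\bk)},
\end{equation*}
since $W_t\by=\bomega_t\times\by$. Writing $\bxi\coloneqq R_t^\top\nabla G(R_t\bh(\bk))\in\R^3$ and using the scalar triple product identity $\inner{\bxi}{\bomega_t\times\bh(\bk)}=\inner{\bomega_t}{\bh(\bk)\times\bxi}$, I would then evaluate at $\bk=r\bphi_t$, where $\bh(r\bphi_t)=(r\phi_{t,1},r\phi_{t,2},\sqrt{k_0^2-r^2}-k_0)^\top$, and substitute the cylindrical decomposition $\bomega_t=(\rho_t\phi_{t,1},\rho_t\phi_{t,2},\zeta_t)^\top$ from \eqref{eq:omega-decomp}. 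Expanding the triple product, the two terms containing $\xi_3$ cancel (they both carry the factor $\rho_tr\phi_{t,1}\phi_{t,2}$ with opposite signs), and what remains is
\begin{equation*}
\partial_t\nu_t(r\bphi_t) = \bigl(r\zeta_t+\rho_t(k_0-\sqrt{k_0^2-r^2})\bigr)(\phi_{t,1}\xi_2-\phi_{t,2}\xi_1).
\end{equation*}

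Second, I would compute the spatial gradient. From $D\bh(\bk)=\bigl(\begin{smallmatrix}I_2\\-\bk^\top/\kappa(\bk)\end{smallmatrix}\bigr)$ the chain rule yields
\begin{equation*}
\nabla\nu_t(\bk) = \begin{pmatrix}\xi_1\\\xi_2\end{pmatrix}-\frac{\xi_3}{\kappa(\bk)}\bk.
\end{equation*}
Evaluating at $\bk=r\bphi_t$ and taking the inner product with $(-\phi_{t,2},\phi_{t,1})^\top$, the second term vanishes because $\bphi_t$ and $(-\phi_{t,2},\phi_{t,1})^\top$ are orthogonal, so
\begin{equation*}
\inner{\nabla\nu_t(r\bphi_t)}{\begin{pmatrix}-\phi_{t,2}\\\phi_{t,1}\end{pmatrix}} = \phi_{t,1}\xi_2-\phi_{t,2}\xi_1.
\end{equation*}
Combining the two displays gives exactly \eqref{eq:infRel}.

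The computation is essentially routine; the only slightly delicate point is the cancellation of the $\xi_3$ contributions. One conceptual way to anticipate this cancellation, which I would mention in the write-up, is that the component of $\bxi$ along $R_t^\top\be^3$ corresponds to a radial direction on the hemisphere $\mathcal{H}_t$ near its north pole $\zb0$, which cannot contribute to an infinitesimal rotation tangent to $\mathcal{H}_t$; this is precisely what the orthogonality $\bphi_t\perp(-\phi_{t,2},\phi_{t,1})^\top$ encodes on the $\bk$-side.
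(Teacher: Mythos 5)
Your proof is correct and follows essentially the same route as the paper's: a direct chain-rule computation exploiting that $\partial_t(R_t\bh)(r\bphi_t)$ is proportional to the directional derivative of $R_t\bh$ along $(-\phi_{t,2},\phi_{t,1})^\top$, with the coefficient $\rho_t(k_0-\sqrt{k_0^2-r^2})+r\zeta_t$. The paper organizes this as a vector identity for $H(t,\bk)=R_t\bh(\bk)$ before pairing with the gradient, whereas you pair with the pulled-back gradient $\bxi$ first and verify the cancellation via the scalar triple product; the $\xi_3$-cancellation you highlight is exactly the vanishing of the third component of $R_t^\top\partial_tH$ in the paper's computation, so the two write-ups are the same argument in different order.
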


This gives rise to the following reconstruction method. 
For the reconstruction to be unique, we require that 
\eqref{eq:infRel} has a unique solution $(\rho_t, \zb\phi_t, \zeta_t) \in \R \times \S^1_+ \times \R$,
which consists of the components of the angular velocity we want to reconstruct.
If the object (and therefore $f$) is asymmetric, it seems reasonable to assume there is indeed a unique solution $(\rho_t,\zb\phi_t,\zeta_t)$ to \eqref{eq:infRel},
which happens in all our numerical simulations.
Conditions for the unique reconstructability of $f$ are discussed in \cite{KetLam11,KurZic21}.

\begin{theorem}[Reconstruction of the angular velocity $\zb\omega_t$]\label{th:infRecon}
Let the rotations matrices $R\in C^1([0,T]\to\SO)$ be continuously differentiable and $t\in[0,T]$.
Let further ${\zb\phi}\in\S^1_+$ be a unique direction
with the property that there exist parameters $\rho,\zeta\in\R$ such that
\begin{equation}\label{eq:infRecon}
\partial_t\nu_t(r{\zb\phi})
=\left(\rho\left(k_0-\sqrt{k_0^2-r^2}\right)+r\zeta\right)
\left<\nabla\nu_t(r{\zb\phi}),
\begin{psmallmatrix}
-\phi_{2}\\
\phi_{1}
\end{psmallmatrix}\right>
\quad\text{for all}\quad r\in(-k_0,k_0).
\end{equation}
Provided that the set
\begin{align*}
\mathcal N_t&\coloneqq\left\{r\in(-k_0,k_0)\setminus\{0\}:\left<\nabla\nu_t(r{\zb\phi}),\begin{psmallmatrix}-\phi_{2}\\\phi_{1}\end{psmallmatrix}\right>\ne0\right\}
\end{align*}
contains at least two elements, then the angular velocity \eqref{eq:omega-decomp} is given by $\zb\omega_t = (\rho{\zb\phi},\zeta)^\top$.
\end{theorem}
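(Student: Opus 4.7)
The plan is to show that the parameters $(\rho, \zb\phi, \zeta)$ in the theorem statement coincide with the true angular-velocity parameters $(\rho_t, \zb\phi_t, \zeta_t)$. First, Lemma~\ref{th:infRel} guarantees that $(\rho_t, \zb\phi_t, \zeta_t)$ itself satisfies the relation \eqref{eq:infRecon} for every $r \in (-k_0, k_0)$. Thus $\zb\phi_t$ is a direction in $\S^1_+$ with the property assumed in the hypothesis, and the uniqueness of $\zb\phi$ forces $\zb\phi = \zb\phi_t$ immediately.

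Next I would pin down $(\rho, \zeta)$. Subtracting the identity \eqref{eq:infRel} instantiated at $(\rho_t, \zb\phi_t, \zeta_t)$ from the hypothesis \eqref{eq:infRecon} (both evaluated on the now common direction $\zb\phi_t$) yields
\[
\Bigl((\rho - \rho_t)\bigl(k_0 - \sqrt{k_0^2 - r^2}\bigr) + r(\zeta - \zeta_t)\Bigr)\left<\nabla\nu_t(r\zb\phi_t), \begin{psmallmatrix}-\phi_{t,2}\\\phi_{t,1}\end{psmallmatrix}\right> = 0
\]
for every $r \in (-k_0, k_0)$. For each $r \in \mathcal N_t$ the second factor is nonzero by the very definition of $\mathcal N_t$, so the bracketed expression vanishes there. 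Selecting two distinct elements $r_1, r_2 \in \mathcal N_t$ produces a homogeneous $2\times 2$ linear system in $(\rho - \rho_t, \zeta - \zeta_t)$, and the problem reduces to showing that this system is non-degenerate.

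The crucial computation is the non-vanishing of the determinant
\[
\det\begin{pmatrix} k_0 - \sqrt{k_0^2-r_1^2} & r_1 \\ k_0 - \sqrt{k_0^2-r_2^2} & r_2 \end{pmatrix} = r_1 r_2\bigl(g(r_1) - g(r_2)\bigr),
\]
where the right-hand form comes from using $k_0 - \sqrt{k_0^2 - r^2} = r^2/(k_0 + \sqrt{k_0^2-r^2})$ and setting $g(r) \coloneqq r/(k_0 + \sqrt{k_0^2-r^2})$. A short differentiation shows $g'(r) = k_0/\bigl((k_0 + \sqrt{k_0^2-r^2})\sqrt{k_0^2-r^2}\bigr) > 0$, so $g$ is strictly increasing on $(-k_0, k_0)$ and hence injective there. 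Combined with $r_1 \ne r_2$ and $r_1 r_2 \ne 0$ (both guaranteed since $\mathcal N_t \subset (-k_0,k_0)\setminus\{0\}$), the determinant is nonzero, forcing $\rho = \rho_t$ and $\zeta = \zeta_t$. Together with $\zb\phi = \zb\phi_t$ this yields $\zb\omega_t = (\rho\zb\phi, \zeta)^\top$ via the decomposition \eqref{eq:omega-decomp}. I expect the only genuine work to lie in the monotonicity of $g$; the remainder is essentially bookkeeping around Lemma~\ref{th:infRel}.
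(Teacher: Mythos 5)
Your proof is correct and takes essentially the same route as the paper: both first invoke Lemma~\ref{th:infRel} to get $\zb\phi=\zb\phi_t$ from the uniqueness assumption, and then use the injectivity of $r\mapsto r/(k_0\pm\sqrt{k_0^2-r^2})$ on $(-k_0,k_0)\setminus\{0\}$ (the paper phrases it as bijectivity of the reciprocal of your $g$, you phrase it as a nonvanishing $2\times2$ determinant) to pin down $\rho$ and $\zeta$ from two elements of $\mathcal N_t$.
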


\begin{proof}
From \autoref{th:infRel}, we find that the uniqueness implies that $\zb\phi_t=\zb\phi$ and therefore also
\begin{equation*}
\rho+\frac r{k_0-\sqrt{k_0^2-r^2}}\,\zeta = \rho_t+\frac r{k_0-\sqrt{k_0^2-r^2}}\,\zeta_t\quad \text{for all}\quad r\in\mathcal N_t.
\end{equation*}
Since the function $(-k_0,k_0)\setminus\{0\}\to\R\setminus[-1,1]$, $r\mapsto r / ({k_0-\sqrt{k_0^2-r^2}})$ is bijective, we have $\rho=\rho_t$ and $\zeta=\zeta_t$ if the equation is satisfied for two different values $r$.
\end{proof}

An alternative version of the last theorem via stereographic projection is found in Appendix~\ref{se:stereo_inf}.  
Once we have reconstructed the angular velocity $\zb\omega_t$ by the above theorem, we can obtain the rotation matrices $R_t$ as follows.

\begin{theorem}[Reconstruction of the rotation from the angular velocity]\label{th:infReconR}
  Let the rotations $R\in C^1([0,T]\to\SO)$ be continuously differentiable with associated angular velocity $\zb\omega$, see \eqref{eq:omega}.
  Then $R$ is the unique solution of the linear initial value problem
  \begin{equation} \label{eq:ode}
    \begin{split}
      R_t' &= R_tW_t,\quad t\in(0,T), \\
      R_0 &= I,
    \end{split}
  \end{equation}
  where the skew-symmetric matrix $W_t\in\R^{3\times3}$ is defined in \eqref{eq:W}. 
\end{theorem}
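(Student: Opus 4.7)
The plan is to reduce the statement to three quick verifications: that $R$ satisfies the matrix ODE in \eqref{eq:ode}, that it fulfils the stated initial condition, and that this initial value problem admits no other solution.

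First I would derive the ODE by simply unpacking the definition of angular velocity. According to \eqref{eq:omega}, we have $R_t^\top R_t' \zb y = \zb\omega_t \times \zb y$ for every $\zb y\in\R^3$. Regarding the cross product $\zb\omega_t \times \cdot$ as a linear endomorphism of $\R^3$, its matrix representation in the standard basis is exactly the skew-symmetric matrix $W_t$ displayed in \eqref{eq:W}. Hence, as an identity between linear maps, $R_t^\top R_t' = W_t$. Multiplying from the left by $R_t$ and using $R_t R_t^\top = I$ gives $R_t' = R_t W_t$, which is the first line of \eqref{eq:ode}. The initial condition $R_0 = I$ has already been fixed by the setup of the rigid motion in \eqref{eq:motion0}.

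For the uniqueness part I would invoke the classical theory for linear ODEs. Since $R \in C^1([0,T]\to\SO)$, the matrix-valued map $t \mapsto W_t = R_t^\top R_t'$ is continuous on the compact interval $[0,T]$. Viewed as an initial value problem in $\R^{3\times 3}$, the equation $R_t' = R_t W_t$ is linear in $R_t$ with continuous coefficients, so the right-hand side is globally Lipschitz in $R_t$ on $[0,T]$. By the Picard--Lindel\"of theorem, the initial value problem therefore possesses a unique solution on all of $[0,T]$ (the fact that this solution happens to stay in $\SO$ is automatic from the construction but is not needed for the uniqueness argument).

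I do not foresee a genuine obstacle: the statement is essentially a rephrasing of the definition of angular velocity combined with a textbook appeal to existence and uniqueness for linear systems. The only mildly delicate point worth flagging explicitly is the identification of the cross-product operator $\zb\omega_t \times \cdot$ with the matrix $W_t$, which is what makes the skew-symmetric form in \eqref{eq:W} the correct one.
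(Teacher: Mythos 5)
Your proposal is correct and follows essentially the same route as the paper's proof: deriving $R_t' = R_tW_t$ directly from the definition \eqref{eq:omega} of the angular velocity and invoking uniqueness for linear ODEs with continuous coefficients. You merely spell out the two steps (the identification of $\zb\omega_t\times\cdot$ with $W_t$, and the Picard--Lindel\"of argument) that the paper leaves implicit.
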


\begin{proof}
  The equation \eqref{eq:ode} follows directly from the definition \eqref{eq:omega} of the angular velocity.
  As a linear ordinary differential equation, the initial value problem \eqref{eq:ode} has a unique solution.
\end{proof}

\begin{remark}[Differences to infinitesimal common line method]
In contrast to the infinitesimal common line method \cite{ElbRitSchSchm20} for the ray transform \eqref{eq:ray}, 
which requires third order derivatives of the data function,
we only need first order derivatives of~$\nu$ in order to reconstruct the angular velocity completely.
Furthermore, we can uniquely recover the rotation, whereas for the ray transform there are always two possible solutions 
corresponding to a reflection in the direction of the imaging wave.
\end{remark}

\section{Reconstruction of the Translations} \label{sec:translations}
So far, we have only considered the computation of the rotations $R_t$, $t\in[0,T]$, in the motion \eqref{eq:motion0}, which we could obtain from the absolute values of the Fourier transforms of our measurements $m_t$, $t\in[0,T]$, that is, from the scaled squared energy $\nu_t$ defined in \eqref{eq:nu}.
To recover the translations $\bd_t\in\R^3$, we need to use in addition the phase information in our measurements $m_t$, see \eqref{eq:meas}.
Therefore we define the \emph{scaled measurement data} $\mu_t\colon\mathcal B^2_{k_0}\to\C$ by
\begin{equation} \label{eq:nu-F-complex}
\mu_t(\bk)\coloneqq-\i\sqrt{\frac2\pi}\,\kappa(\bk)\e^{-\i\kappa(\bk)\rM}\mathcal F[m_t](\bk).
\end{equation}
According to \eqref{eq:recon_n} this can be expressed in terms of the scattering potential $f$, the rotation $R_t\in\SO$, and the translation $\bd_t\in\R^3$ by
\begin{equation} \label{eq:mu-F}
\mu_t(\bk) = \ktran [f]\left(R_t\bh(\bk) \right)\e^{-\i\inner{\bd_t}{\bh(\bk)}}. 
\end{equation}
If we have already reconstructed the rotations $R_t$, 
then we know by \eqref{eq:commonCircleIdentity} and \eqref{eq:commonCircleIdentityDual} 
the elliptic arcs $\bgam_{s,t}$ 
and the duals $\zb{\gamma}^*_{s,t}$ along which the 
values of the scaled squared energies $\nu_s=\abs{\mu_s}^2$ and $\nu_t=\abs{\mu_t}^2$ 
coincide.
Therefore, the corresponding values of $\mu_s$ and $\mu_t$ only differ by a phase factor, 
which depends on the translation vectors $\bd_s$ and $\bd_t$.
We compute their relation explicitly in the following lemma.

\begin{lemma}[Complex phase shift along the common circles]\label{th:common-circle:translation}
Let $s,t\in[0,T]$ such that $R_s\be^3\neq \pm R_t\be^3$ and let $\bgam_{s,t}$ and $\bgam_{t,s}$ be the elliptic arcs defined in \autoref{th:gamma} and $\bsigma_{s,t}=R_s(\bh\circ\bgam_{s,t})$ be the corresponding common circular arc introduced in \autoref{th:YsYt_param}. Moreover, let $\zb{\gamma}^*_{s,t}$ and $\zb{\gamma}^*_{t,s}$ be the dual elliptic arcs and 
$\zb{\sigma}^*_{s,t}=R_s(\bh\circ\zb{\gamma}^*_{s,t})$ 
be the corresponding dual common circular arc as defined in \autoref{th:dualCircle}. Then we have
\begin{enumerate}
\item
for every $\beta\in J_{s,t}$ with $\mu_s(\bgam_{s,t}(\beta))\neq0$ that
\begin{equation}\label{eq:common-circle:translation}
\e^{\i\inner{R_t\bd_t-R_s\bd_s}{\bsigma_{s,t}(\beta)}} = \frac{\mu_s(\bgam_{s,t}(\beta))}{\mu_t(\bgam_{t,s}(-\beta))}.
\end{equation}
\item
for every $\beta\in J^*_{s,t}$ with $\mu_s(\zb{\gamma}^*_{s,t}(\beta))\neq0$ that
\begin{equation}\label{eq:common-circle:translationDual}
\e^{\i\inner{R_t\bd_t-R_s\bd_s}{\zb{\sigma}^*_{s,t}(\beta)}} 
= 
\frac{\mu_s(\zb{\gamma}^*_{s,t}(\beta))}{\overline{\mu_t(\zb{\gamma}^*_{t,s}(\beta))}}.
\end{equation}
\end{enumerate}
\end{lemma}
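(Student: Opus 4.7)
The plan is to reduce both identities to the defining formula \eqref{eq:mu-F}, namely $\mu_t(\bk) = \ktran[f](R_t\bh(\bk))\,\e^{-\i\inner{\bd_t}{\bh(\bk)}}$, combined with the parameterizations of the common circular arcs already established in \autoref{th:YsYt_param}, \autoref{th:gamma}, and \autoref{th:dualCircle}. No new analytic input is needed; the argument is essentially algebraic once the geometry is in place.

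For part~(i), I would evaluate \eqref{eq:mu-F} at $\bk=\bgam_{s,t}(\beta)$ and at $\bk=\bgam_{t,s}(-\beta)$ and invoke the common-circle identity \eqref{eq:commonCircleIdentity}, which states $R_s\bh(\bgam_{s,t}(\beta)) = R_t\bh(\bgam_{t,s}(-\beta)) = \bsigma_{s,t}(\beta)$. The two resulting expressions for $\mu_s(\bgam_{s,t}(\beta))$ and $\mu_t(\bgam_{t,s}(-\beta))$ therefore share the common factor $\ktran[f](\bsigma_{s,t}(\beta))$, which is nonzero thanks to the hypothesis $\mu_s(\bgam_{s,t}(\beta))\neq 0$, so the quotient is well defined. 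Rewriting $\bh(\bgam_{s,t}(\beta))=R_s^\top\bsigma_{s,t}(\beta)$ and $\bh(\bgam_{t,s}(-\beta))=R_t^\top\bsigma_{s,t}(\beta)$, using orthogonality of $R_s, R_t$, and moving the rotations onto the translations via $\inner{\bd_s}{R_s^\top \bsigma}=\inner{R_s\bd_s}{\bsigma}$, then collapses the phase factors into $\e^{\i\inner{R_t\bd_t-R_s\bd_s}{\bsigma_{s,t}(\beta)}}$, as claimed.

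For part~(ii), the same recipe applies but with the dual relation \eqref{eq:commonCircleIdentityDual}, $R_s\bh(\zb\gamma^*_{s,t}(\beta)) = -R_t\bh(\zb\gamma^*_{t,s}(\beta)) = \zb\sigma^*_{s,t}(\beta)$. The only difference is that the Fourier transform is now evaluated at $\zb\sigma^*_{s,t}(\beta)$ on the $s$-side and at $-\zb\sigma^*_{s,t}(\beta)$ on the $t$-side. Here I invoke Friedel's law \eqref{eq:symmetryNu}, $\ktran[f](-\by)=\overline{\ktran[f](\by)}$, which suggests taking the complex conjugate of $\mu_t(\zb\gamma^*_{t,s}(\beta))$; this flips $\ktran[f](-\zb\sigma^*_{s,t}(\beta))$ into $\ktran[f](\zb\sigma^*_{s,t}(\beta))$, matching the $s$-side, and simultaneously negates the phase exponent. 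Dividing $\mu_s(\zb\gamma^*_{s,t}(\beta))$ by $\overline{\mu_t(\zb\gamma^*_{t,s}(\beta))}$ then cancels the Fourier-transform factor and, after the same orthogonality rearrangement as before, leaves $\e^{\i\inner{R_t\bd_t-R_s\bd_s}{\zb\sigma^*_{s,t}(\beta)}}$.

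No step is genuinely difficult; the main obstacle is simply bookkeeping of signs in the dual case, i.e., keeping track that the sign flip produced by \eqref{eq:commonCircleIdentityDual} must be absorbed by complex conjugation rather than by a further rotation. Once the phase exponents are rewritten using $R_s^\top, R_t^\top$ and the orthogonality of the rotations, both identities fall out directly.
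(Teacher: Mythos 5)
Your proposal is correct and follows essentially the same route as the paper's proof: evaluate \eqref{eq:mu-F} along the two arcs, use the common-circle identities \eqref{eq:commonCircleIdentity} and \eqref{eq:commonCircleIdentityDual} to identify the Fourier-transform factors (with Friedel's law \eqref{eq:symmetryNu} absorbing the sign flip in the dual case via complex conjugation), and cancel them to leave the phase factor. The only cosmetic difference is that you form the quotient of the two expressions while the paper rewrites $\mu_s$ step by step into $\mu_t$ times the phase; the nonvanishing argument you give for the common factor is also the one the paper implicitly uses.
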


In the degenerate cases $R_s\be^3=\pm R_t\be^3$, a similar relation holds on the whole hemisphere.

\begin{lemma}[Special cases $R_s\be^3=\pm R_t\be^3$]\label{th:commonAxis:translation}
Let $R_s^\top R_t$ be known for some $s,t\in[0,T]$, and let $\mathrm{Q}$ and $\mathrm{S}$ be given in \eqref{eq:RS}.
\begin{enumerate}
\item
If $R_t\be^3=R_s\be^3$,  then we have for all $\bk\in\mathcal B_{k_0}^2$ with $\mu_s(\bk)\ne0$ that
\begin{equation}\label{eq:translDegnPlus}
\e^{\i\inner{R_t\bd_t-R_s\bd_s}{R_s\bh(\bk)}} = \frac{\mu_s(\bk)}{\mu_t(\mathrm{Q}(-\alpha)\bk)}
\end{equation}
with some $\alpha\in\R/(2\pi\Z)$ fulfilling $R_s^\top R_t\be^3=Q^{(3)}(\alpha)$ according to \autoref{th:commonAxis} (i).
\item
If $ R_t\be^3=-R_s\be^3$, then we have for all $\bk\in\mathcal B_{k_0}^2$ with $\mu_s(\bk)\ne0$ that
\begin{equation}\label{eq:translDegnMinus}
\e^{\i\inner{R_t\bd_t-R_s\bd_s}{R_s\bh(\bk)}} = \frac{\mu_s(\bk)}{\overline{\mu_t(\mathrm{Q}(-\alpha)S\bk)}}
\end{equation}
with some $\alpha\in\R/(2\pi\Z)$ fulfilling 
$R_s^\top R_t\be^3=Q^{(2)}(\pi)Q^{(3)}(\alpha)$ according to \autoref{th:commonAxis}~(ii).
\end{enumerate}
\end{lemma}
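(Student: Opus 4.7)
The proof will proceed in both parts by substituting the explicit form of $R_s^\top R_t$ from \autoref{th:commonAxis} into the Fourier representation \eqref{eq:mu-F} of $\mu_t$. The main idea is to exhibit, for each $\bk\in\mathcal B_{k_0}^2$, a single companion point $\bk'\in\mathcal B_{k_0}^2$ in the other image such that $R_t\bh(\bk')=\pm R_s\bh(\bk)$. This makes the amplitude $\ktran[f](R_s\bh(\bk))$ cancel in the appropriate quotient, leaving a pure phase that I will match with the right-hand side $\e^{\i\inner{R_t\bd_t-R_s\bd_s}{R_s\bh(\bk)}}$.

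For part (i), I would write $R_t=R_sQ^{(3)}(\alpha)$ and observe that $Q^{(3)}(\alpha)$ acts on $\R^3$ as the rotation $\mathrm{Q}(\alpha)$ in the first two coordinates together with the identity on the third, while $\kappa(\mathrm{Q}(-\alpha)\bk)=\kappa(\bk)$ by orthogonality. A one-line computation then yields the crucial identity $Q^{(3)}(\alpha)\,\bh(\mathrm{Q}(-\alpha)\bk)=\bh(\bk)$, so the companion point is $\bk'=\mathrm{Q}(-\alpha)\bk$. Forming the ratio $\mu_s(\bk)/\mu_t(\mathrm{Q}(-\alpha)\bk)$ using \eqref{eq:mu-F} leaves the phase $\e^{\i[\inner{\bd_t}{\bh(\mathrm{Q}(-\alpha)\bk)}-\inner{\bd_s}{\bh(\bk)}]}$, which I rewrite as the desired $\e^{\i\inner{R_t\bd_t-R_s\bd_s}{R_s\bh(\bk)}}$ by moving $R_s^\top$ across the first inner product and reusing $Q^{(3)}(-\alpha)\bh(\bk)=\bh(\mathrm{Q}(-\alpha)\bk)$.

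For part (ii), I would proceed analogously with $R_t=R_sQ^{(2)}(\pi)Q^{(3)}(\alpha)$, using $Q^{(2)}(\pi)=\mathrm{diag}(-1,1,-1)$. A direct calculation shows $R_t\bh(\mathrm{Q}(-\alpha)\mathrm{S}\bk)=-R_s\bh(\bk)$, so the companion point now picks up the extra reflection $\mathrm{S}$. The minus sign in front of $R_s\bh(\bk)$ is absorbed via Friedel's law \eqref{eq:symmetryNu}, which turns $\ktran[f](-R_s\bh(\bk))$ into $\overline{\ktran[f](R_s\bh(\bk))}$; this is precisely why the theorem statement uses $\overline{\mu_t(\mathrm{Q}(-\alpha)\mathrm{S}\bk)}$ in the denominator. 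After cancelling the amplitude, the remaining phase is matched to $\inner{R_t\bd_t-R_s\bd_s}{R_s\bh(\bk)}$ exactly as in part (i).

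The only genuine obstacle is the sign bookkeeping in part (ii): conjugating $\mu_t$ simultaneously flips the sign of its translational phase factor, so one has to check that the three sign sources—the ``$-$'' from $R_t\bh(\cdot)=-R_s\bh(\bk)$, the conjugation from Friedel's law, and the conjugation of the $\bd_t$-phase—combine consistently with the identity $R_s^\top R_t\bh(\mathrm{Q}(-\alpha)\mathrm{S}\bk)=-\bh(\bk)$. No choices are involved; it is only a matter of tracking signs.
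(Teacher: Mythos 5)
Your proposal is correct and matches the paper's own proof: both arguments substitute the companion-point identities $R_t\bh(\mathrm{Q}(-\alpha)\bk)=R_s\bh(\bk)$ and $R_t\bh(\mathrm{Q}(-\alpha)\mathrm{S}\bk)=-R_s\bh(\bk)$ from \autoref{th:commonAxis} into the representation \eqref{eq:mu-F}, cancel the amplitude $\ktran[f](R_s\bh(\bk))$ (via Friedel's law \eqref{eq:symmetryNu} in part (ii)), and read off the translational phase. The sign bookkeeping you flag in part (ii) works out exactly as you describe.
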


In contrast to data of the ray transform \eqref{eq:ray}, where the measurements are invariant to the object's position in
direction of the incident wave, the diffraction data are not invariant with respect to the third component of the translations.
By the following theorem, we can uniquely recover the translation vectors~$\bd_t$, $t\in[0,T]$, from the equations \eqref{eq:common-circle:translation}, \eqref{eq:common-circle:translationDual}, \eqref{eq:translDegnPlus}, and \eqref{eq:translDegnMinus}.

\begin{theorem}[Reconstruction of the translation]\label{th:reconTransl}
Let the relative rotation $R_s^\top R_t$ be known for some $s,t\in[0,T]$. 
\begin{enumerate}
\item
If $R_s\be^3\ne\pm R_t\be^3$, then the relative translation $R_s^\top R_t\bd_t-\bd_s$ is uniquely determined from the scaled measurements~$\mu_s$ and~$\mu_t$ by the equations \eqref{eq:common-circle:translation} and \eqref{eq:common-circle:translationDual}.
\item
If $R_s\be^3=\pm R_t\be^3$, then the relative translation $R_s^\top R_t\bd_t-\bd_s$ is uniquely determined from the scaled measurements~$\mu_s$ and~$\mu_t$ by the equation \eqref{eq:translDegnPlus} or \eqref{eq:translDegnMinus}.
\end{enumerate}
\end{theorem}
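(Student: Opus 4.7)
The plan is to combine the four phase identities from \autoref{th:common-circle:translation} and \autoref{th:commonAxis:translation} into a single system that pins down $\bw \coloneqq R_s^\top R_t\bd_t - \bd_s$ uniquely. First I rewrite each identity so that only the unknown $\bw$ appears, not $R_s, R_t, \bd_s, \bd_t$ individually: rotation invariance of the inner product, together with $\bsigma_{s,t} = R_s\circ\bh\circ\bgam_{s,t}$, yields
\[ \inner{R_t\bd_t - R_s\bd_s}{\bsigma_{s,t}(\beta)} = \inner{\bw}{\bh(\bgam_{s,t}(\beta))}, \]
and the analogous identities hold on the dual arc $\zb{\sigma}^*_{s,t}$ and, in the degenerate case, on the whole hemisphere via $\inner{R_t\bd_t - R_s\bd_s}{R_s\bh(\bk)} = \inner{\bw}{\bh(\bk)}$. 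By \autoref{th:Euler} together with the cross-product identity $R_s^\top(R_s\be^3\times R_t\be^3) = \be^3\times R_s^\top R_t\be^3$, the arcs $\bgam_{s,t}, \bgam_{t,s}, \zb{\gamma}^*_{s,t}, \zb{\gamma}^*_{t,s}$, their parameter intervals, and the rotated basis $\{R_s^\top\bv_{s,t}^j\}_{j=1}^{3}$ depend only on $R_s^\top R_t$, so the reformulated equations are genuine equations for $\bw$ alone given the measurements.

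For case (i), suppose that both $\bw$ and $\bw'$ satisfy the reformulated versions of \eqref{eq:common-circle:translation} and \eqref{eq:common-circle:translationDual}. Then $\e^{\i\inner{\bw-\bw'}{\bh(\bgam_{s,t}(\beta))}} = 1$ for every $\beta \in J_{s,t}$, so the real scalar $\Phi(\beta)\coloneqq\inner{\bw-\bw'}{\bh(\bgam_{s,t}(\beta))}$ takes values in $2\pi\Z$. Since $\Phi$ is continuous in $\beta$ and vanishes at $\beta=0$ (where $\bgam_{s,t}(0)=\zb0$), it vanishes identically. Substituting the parameterization \eqref{eq:YsYt_param} and using the linear independence of $\cos\beta-1$ and $\sin\beta$ on $J_{s,t}$, I obtain $\inner{\bw-\bw'}{R_s^\top\bv_{s,t}^1}=\inner{\bw-\bw'}{R_s^\top\bv_{s,t}^2}=0$; the same argument applied to the dual arc gives $\inner{\bw-\bw'}{R_s^\top\bv_{s,t}^3}=0$. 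Since $\{R_s^\top\bv_{s,t}^j\}_{j=1}^{3}$ is an orthonormal basis of $\R^3$, this forces $\bw=\bw'$.

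For case (ii), the substitution leaves $\e^{\i\inner{\bw}{\bh(\bk)}}$ prescribed for all $\bk\in\mathcal B^2_{k_0}$ with $\mu_s(\bk)\ne0$. Continuous phase tracking with base point $\bh(\zb0)=\zb0$ reduces uniqueness to the claim that $\inner{\bw-\bw'}{\bh(\bk)} \equiv 0$ on $\mathcal B^2_{k_0}$ implies $\bw=\bw'$. Writing the left-hand side as $(w_1-w_1')k_1 + (w_2-w_2')k_2 + (w_3-w_3')\bigl(\kappa(\bk)-k_0\bigr)$ and exploiting that $k_1, k_2, \kappa(\bk)-k_0$ are linearly independent as functions on $\mathcal B^2_{k_0}$ (the first two are linear, while $\kappa-k_0$ has the non-degenerate Hessian $-k_0^{-1}I$ at the origin), the three coefficient differences vanish separately, so $\bw=\bw'$.

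The main obstacle is the continuous phase-tracking step, which requires the right-hand-side denominators in \eqref{eq:common-circle:translation}, \eqref{eq:common-circle:translationDual}, \eqref{eq:translDegnPlus}, \eqref{eq:translDegnMinus} not to vanish on the entire domain of parameters. This is precisely the pointwise non-vanishing condition under which \autoref{th:common-circle:translation} and \autoref{th:commonAxis:translation} are stated; for a generic $f$ the zero set of $\ktran[f]$ is a lower-dimensional variety, so the identity for $\Phi$ (or its degenerate analogue) extends across the exceptional points by continuity. The minor remaining bookkeeping is the verification that $\bh\circ\bgam_{s,t}$ and $R_s^\top\bv_{s,t}^j$ depend only on $R_s^\top R_t$, which is entirely formal.
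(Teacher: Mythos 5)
Your proposal is correct and shares the paper's core skeleton: in case (i) you extract the three coefficients $\inner{\bw}{R_s^\top\bv_{s,t}^j}$ from the linear independence of $\beta\mapsto\cos(\beta)-1$ and $\beta\mapsto\sin(\beta)$ on the primal and dual arcs and then invoke the orthonormal basis $(R_s^\top\bv_{s,t}^j)_{j=1}^3$, and in case (ii) you use that $\bh(\bk)$ spans $\R^3$ as $\bk$ ranges over an open set --- exactly as the paper does. Where you genuinely diverge is in how the phase ambiguity of $\e^{\i\inner{\bw}{\cdot}}$ is resolved. The paper is constructive: it takes a continuous branch of the logarithm by integrating the logarithmic derivative $F'/F$ (respectively $\nabla G/G$ along a path in case (ii)) from the base point where the left-hand side is known to vanish, which yields the linear functional $M(\beta)=\inner{R_t\bd_t-R_s\bd_s}{\bsigma_{s,t}(\beta)}$ explicitly and doubles as a reconstruction recipe (this is what \autoref{alg:translation} implements via unwrapping). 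You instead run a pure uniqueness argument: the difference of two solutions gives a continuous, $2\pi\Z$-valued function $\Phi$ with $\Phi(0)=0$, hence $\Phi\equiv0$. Both are valid proofs of the stated uniqueness; the paper's buys an algorithm, yours is slightly shorter.

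One remark on your final paragraph: the appeal to genericity of $f$ is unnecessary. Since $f\ge0$ is not identically zero, $\abs{\mu_s(\zb0)}=\abs{\ktran[f](\zb0)}=(2\pi)^{-3/2}\int_{\R^3}f(\bx)\dd\bx>0$, and $\bgam_{s,t}(0)=\zb0$, so by continuity the denominators are nonzero on an open interval $J$ around $\beta=0$ (respectively an open disk around $\zb0$ in case (ii)). Your argument only needs the constraint $\Phi(\beta)\in2\pi\Z$ on a connected neighborhood of the base point --- on which a continuous function into a discrete set is constant, hence zero --- and the linear-independence step works on any interval of positive length. As written, asserting $\Phi\equiv0$ on all of $J_{s,t}$ from the constraint holding only where $\mu_s\circ\bgam_{s,t}\ne0$ would require an extra density argument if that zero set disconnected the interval; restricting to the guaranteed neighborhood of $0$ sidesteps this entirely, which is precisely what the paper does.
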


Thus, if $f$ is sufficiently asymmetric so that we find for each $t\in[0,T]$ a time $s\in[0,T]$, for which $R_s$ was already reconstructed (starting from the normalization $R_0=I$), and such that there either exist unique elliptic arcs in $\nu_s$ and $\nu_t$ as described in \autoref{th:commonCircle}, or we have $R_s\be^3=\pm R_t\be^3$ and there exists a unique angle $\alpha$ fulfilling either \eqref{eq:commonAxis} or \eqref{eq:commonAxisReflection}, then \autoref{th:commonAxis} and \autoref{th:commonCircle} determine uniquely the rotation $R_t$. With this knowledge, we get from \autoref{th:reconTransl} with $\bd_0=\zb0$ all the translations $\bd_t$, $t\in[0,T]$, and therefore the complete motion $\Psi$ of our object, introduced in \eqref{eq:motion0}.

The following remark gives an interesting relation to higher order moment methods.

\begin{remark}[Relation to higher order moment methods]
We can also detect the optical center $\mathcal C\in\R^3$ of the object, that is, the ratio
\[ \mathcal C\coloneqq\frac{\int_{\R^3}\bx f(\bx)\dd x}{\int_{\R^3}f(\bx)\dd x} = \frac{\i\nabla\ktran[f](\zb0)}{\ktran[f](\zb0)} \]
of the first and the zeroth moment of the function $f$, from the transformed data $\mu_t$, see \eqref{eq:nu-F-complex}, by realizing that
\[ \frac{\i\partial_{k_i}\mu_t(\zb0)}{\mu_t(\zb0)} = \frac{\i\inner{\nabla\ktran[f](\zb0)}{R_t\be^i}}{\ktran[f](\zb0)}+d_{t,i} = \inner{\mathcal C}{R_t\be^i}+d_{t,i},\quad i\in\{1,2\}. \]
If we have a time $t\in[0,T]$ for which the rotation $R_t$ has a rotation axis different from $\R\be^3$, this allows us (other than from data of the ray transform) to fully recover the point $\mathcal C\in\R^3$ without the need of first reconstructing $f$.
Theoretically, this approach also provides a reconstruction of arbitrary moments of the function~$f$ by incorporating higher-order derivatives of $\mu_t$, which was used in \cite{KurZic21} to prove the unique reconstructability of $f$.
\end{remark}

\section{Reconstruction Methods} \label{sec:recon}
Based on our previous results we can  provide concrete reconstruction methods for the motion parameters now.
We start by considering the rotations and continue with translations afterwards.

\subsection{Reconstruction of the rotation}

For reconstructing the rotations $R_t$, we can utilize the common circle method in \autoref{sec:comm_circ} 
and its infinitesimal counterpart in \autoref{sec:cont-cc}.
Here, we assume that $\nu_t$ from \eqref{eq:nu} are given.

\subsubsection{Direct common circle method} \label{sec:recDirect}
We utilize \autoref{th:commonCircle} to find the common circles and therefore reconstruct the rotation~$R_t$.
We want
to find the Euler angles $(\varphi,\theta,\psi)\in(\R/(2\pi\Z))\times[0,\pi]\times(\R/(2\pi\Z))$ 
of the rotation $R_s^\top R_t$ by solving \eqref{eq:nuGamma} and \eqref{eq:nuGammaDual}.
When working with measured data, it is unlikely that these equations hold exactly,
therefore we propose a least-squares approach:
we aim to minimize the functional
\begin{equation}\label{eq:dist}
  \mathcal E_{s,t}(\varphi,\theta,\psi)
  \coloneqq
  \int_{-\pi/2}^{\pi/2} 
  \abs{\nu_t (\bgam^{\pi-\psi,\theta}(-\beta))
    -\nu_s (\bgam^{\varphi,\theta}(\beta))}^2
  + \abs{\nu_t (\bgam^{*,\pi-\psi,\theta}(\beta))
    -\nu_s (\bgam^{*,\varphi,\theta}(\beta))}^2
  \dd \beta
\end{equation}
over $\varphi,\psi\in\R/(2\pi\Z)$ and $\theta\in[0,\pi]$,
where the elliptic arcs $\bgam^{\varphi,\theta}$ and $\bgam^{*,\varphi,\theta}$ are given in \eqref{eq:gammaEuler} and \eqref{eq:dualEllipseEuler}.
If $\nu_t$ is given on a grid, it needs to be interpolated in order to evaluate $\nu_t (\bgam^{\pi-\psi,\theta}(-\beta))$ in \eqref{eq:dist}.
Furthermore, the integral in \eqref{eq:dist} can be discretized via quadrature.

We consider the minimizer of $\mathcal E_{s,t}$ as a good approximation of the Euler angles \eqref{eq:euler} of the rotation $R_s^\top R_t$.
Using that $R_0=I$, we compute the minimizer of $\mathcal E_{0,t}$ to obtain $R_0^\top R_t = R_t$ for all $t$ by \autoref{alg:cc}.

\begin{algorithm}
  \KwIn{Scaled squared energy $\nu_t$, discretization parameter $N\in\N$, grid of Euler angles $(\varphi_\ell,\theta_\ell,\psi_\ell) \subset [0,2\pi)\times[0,\pi]\times[0,2\pi)$, $\ell=1,\dots,L$.}
  
  
  Set the grid $\beta_n \coloneqq n\pi/N,$ $n=-N,\dots,N$\;
  
  \For{$\ell=1,\dots,L$}{    
    Compute 
    $\mathtt E(\ell)
    \coloneqq
    \sum_{n=-N}^{N} 
    \abs{\nu_t (\bgam^{\pi-\psi_\ell,\theta_\ell}(-\beta_n))
      -\nu_0 (\bgam^{\varphi_\ell,\theta_\ell}(\beta_n))}^2 +\abs{\nu_t (\bgam^{*,\pi-\psi_\ell,\theta_\ell}(\beta_n))
      -\nu_0 (\bgam^{*,\varphi_\ell,\theta_\ell}(\beta_n))}^2$ using an interpolation of $\nu_t$ and $\nu_0$\;
  }
  Compute $\hat \ell = \arg\min_\ell \mathtt E$\;
  
  \KwOut{Rotation $R_t \approx Q^{(3)}(\varphi_{\hat \ell}) Q^{(2)}(\theta_{\hat \ell}) Q^{(3)}(\psi_{\hat \ell})$, see \eqref{eq:euler}.}
  
  \caption{Reconstruction of the rotation $R_t$ with the direct common circle method}
  \label{alg:cc}
\end{algorithm}

The accuracy of $R_t$ may be improved by incorporating reconstructions of $R_s^\top R_t$ for all $s,t$, similarly to cryo-EM~\cite{WanSinWen13}.
The minimization of $\mathcal E_{s,t}$ is a three-dimensional, non-linear and non-convex optimization problem, for which we can use a brute force method by searching on a grid of $\SO$.
The computation of the minimum of \eqref{eq:dist} becomes much more efficient if we have good initial values $(\varphi,\theta,\psi)$, which can be obtained by the infinitesimal method in the next subsection.

\subsubsection{Infinitesimal common circle method}
\label{sec:recInf}
The reconstruction is done in two steps. First, we reconstruct the angular velocity, second we use this to find the rotation.

\paragraph{Angular velocity}

Let $t \in (0,T)$ be arbitrary but fixed.
We reconstruct the angular velocity $\zb\omega_t=(\rho_t\cos(\phi_t),\rho_t\sin(\phi_t),\zeta_t)^\top$, $\rho_t,\zeta_t\in\R$, $\phi_t\in[0,\pi)$ using \autoref{th:infRecon}.
In particular, we construct a functional that we minimize over $(\rho,\phi,\zeta)$ in order to find the exact parameters $(\rho_t,\phi_t,\zeta_t)$.
For $r \in (-k_0,k_0)$ and $\phi\in[0,\pi)$, we set
\begin{equation}\label{eq:gh}
  \begin{aligned}
    g_{\phi}(r)
    &\coloneqq \partial_t\nu_t(r\cos(\phi),r\sin(\phi)),
    \\
    p_{\phi}(r)
    &\coloneqq
    \left(k_0-\sqrt{k_0^2-r^2}\right) \left<\nabla\nu_t(r\cos(\phi),r\sin(\phi)),\begin{psmallmatrix}-\sin(\phi)\\\cos(\phi)\end{psmallmatrix}\right>
    ,\\
    q_{\phi}(r)
    &\coloneqq
    r\left<\nabla\nu_t(r\cos(\phi),r\sin(\phi)),\begin{psmallmatrix}-\sin(\phi)\\\cos(\phi)\end{psmallmatrix}\right>.
  \end{aligned}
\end{equation}
Note that these functions are indeed continuous and they are obtained by differentiating the scaled squared energy $\nu_t$.
Then relation \eqref{eq:infRel} of the angular velocity can be written as
\begin{equation} \label{eq:d-nu-g}
  g_{\phi_t}(r) 
  = \rho_t\, p_{\phi_t}(r) + \zeta_t\, q_{\phi_t}(r)
  ,\qquad r\in(-k_0,k_0).
\end{equation}
As in the direct method, we use a least squares approach to solve \eqref{eq:d-nu-g} in order to recover the quantities $\rho_t$, $\phi_t$ and $\zeta_t$.
We want to minimize 
the functional
\begin{equation} \label{eq:J}
  \mathcal J(\rho,\phi,\zeta)
  \coloneqq \norm{g_{\phi} - \rho \, p_{\phi} - \zeta \, q_{\phi}}_{L^2(-k_0,k_0)}^2
  ,\qquad \rho,\zeta\in\R,\;\phi\in[0,\pi),
\end{equation}
which vanishes according to \eqref{eq:d-nu-g} for $(\rho,\phi,\zeta)=(\rho_t,\phi_t,\zeta_t)$, so that the desired angular velocity $\zb\omega_t$ is indeed a minimizer of $\mathcal J_t$.

We minimize $\mathcal J$ by a brute-force method.
For every $\phi \in [0,\pi)$ on a fixed grid, we compute the minimizer of the functional
$\mathcal J_{\phi}\colon\R^2\to\R$,
$\mathcal J_{\phi}(\rho,\zeta)\coloneqq\mathcal J(\rho,\phi,\zeta)$,
which we can explicitly get from the optimality condition
\begin{equation*}
  \zb0
  =
  \nabla\mathcal J_{\phi}(\rho,\zeta)
  =
  \begin{pmatrix}
    \frac{\partial}{\partial \rho}\mathcal J_{\phi}(\rho,\zeta)\\
    \frac{\partial}{\partial \zeta}\mathcal J_{\phi}(\rho,\zeta)
  \end{pmatrix}
  = 2
  \begin{pmatrix}
    \rho \inn{p_{\phi},p_{\phi}} - \inn{g_{\phi},p_{\phi}} + \zeta \inn{p_{\phi},q_{\phi}}\\
    \zeta \inn{q_{\phi},q_{\phi}} - \inn{g_{\phi},q_{\phi}} + \rho \inn{p_{\phi},q_{\phi}}
  \end{pmatrix},
\end{equation*}
where $\left<\cdot,\cdot\right>$ denotes the inner product on $L^2((-k_0,k_0)\to\R)$ here.
Provided that the functions $p_{\phi}$ and $q_{\phi}$ are linearly independent in $L^2((-k_0,k_0)\to\R)$, 
so that by Cauchy-Schwarz' inequality $\norm{p_\phi}_{L^2}\norm{q_\phi}_{L^2} \neq \abs{\inn{p_\phi,q_\phi}}$,  the above system has a unique solution.
The unique minimizer $(\hat\rho(\phi),\hat\zeta(\phi))$ is then given by
\begin{equation} \label{eq:Jmin}
\begin{pmatrix} \hat\rho(\phi)\\ \hat\zeta(\phi) \end{pmatrix} 
= 
\begin{pmatrix}
  \inn{p_{\phi},p_{\phi}} 
	& \inn{p_{\phi},q_{\phi}} \\ \inn{p_{\phi},q_{\phi}} & \inn{q_{\phi}, q_{\phi}}
\end{pmatrix}^{-1}
\begin{pmatrix} \inn{g_{\phi},p_{\phi}}\\ \inn{g_{\phi},q_{\phi}} \end{pmatrix}. 
\end{equation}
For every $\phi\in[0,\pi)$ on the grid, we thus first calculate the value
\begin{equation} \label{eq:j}
  j(\phi)
  \coloneqq
  \min_{\rho,\zeta\in\R} \mathcal J(\rho,\phi,\zeta) 
	= 
	\mathcal J(\hat\rho(\phi),\phi,\hat\zeta(\phi)),
\end{equation}
then we take as approximation of the angle $\phi_t$ in the angular velocity $\zb\omega_t$ the minimizer $\hat\phi\in[0,\pi)$ of $j(\phi)$ on this grid, 
and pick $\hat\rho(\hat\phi)$ and $\hat\zeta(\hat\phi)$
as approximations for $\rho_t$ and $\zeta_t$.
The reconstruction is summarized in \autoref{alg:infOmega}.

\begin{algorithm}
  \KwIn{Scaled squared energy $\nu_t(r_n \cos \phi_\ell, r_n\sin\phi_\ell)$ from \eqref{eq:nu} on a polar grid $r_n\in[0,k_0)$, $n=1,\dots,N$,
  and $\phi_\ell\in[0,\pi)$, $\ell=1,\dots,L$.}
  
  \For{$\ell=1,\dots,L$}{
    Compute the functions $g_{\phi_\ell}(r_n),$ $p_{\phi_\ell}(r_n),$ and $q_{\phi_\ell}(r_n)$, $n=1,\dots,N$ by \eqref{eq:gh}\;
    
    Compute 
    $\hat\rho(\phi_\ell)$ and $\hat\zeta(\phi_\ell)$ by \eqref{eq:Jmin};
    
    Compute $j(\phi_\ell) \coloneqq 
    \sum_{n=1}^{N} \abs{g_{\phi_\ell}(r_n) - \hat\rho(\hat\phi_\ell) \, p_{\phi_\ell}(r_n) 
		- \hat\zeta(\hat\phi_\ell) \, q_{\phi_\ell}(r_n)}^2
    $
  }
  Set $\hat\phi$ as minimizer of $j(\phi_\ell)$ over $\ell=1,\dots,L$\;
  
  \KwOut{Angular velocity $\zb\omega_t \approx \left(\hat\rho(\hat\phi)\, \cos\hat\phi,\,
    \hat\rho(\hat\phi)\, \sin\hat\phi,\,
    \hat\zeta(\hat\phi) \right)$.}
  
  \caption{Reconstruction of the angular velocity $\zb\omega_t$ with the infinitesimal method}
  \label{alg:infOmega}
\end{algorithm}

\begin{remark}\label{rem:phi-sign}
  The minimizer of $\mathcal J$ might not be unique in general, 
  depending on the symmetry of the scattering potential $f$.
  In the described method, there are two steps of possible non-uniqueness:
  Firstly, the functions $p_{\phi}$ and $q_{\phi}$ might be linearly dependent, then the minimizer of $\mathcal J_{\phi}$ is not unique.
  Secondly, the subsequent minimization over $\phi\in[0,\pi)$ might lead to more than one minimum point.
  However, in our numerical tests below with non-symmetric functions $f$, we always computed approximately the correct minima.
\end{remark}

\begin{remark} \label{rem:only-rotation}
  If the object rotates around the origin without any further translation,
  i.e., $\bd_t= \zb0$ for all $t$,
  then the function $\nu_t$ in Algorithms \ref{alg:cc} and \ref{alg:infOmega} for the common circle and infinitesimal common circle methods 
  can be replaced by the complex-valued function $\mu_t$ from \eqref{eq:mu-F}, where then the exponent in the phase factor vanishes.
\end{remark}

\paragraph{Rotation matrix}
We compute rotation matrices $R_t$ given angular velocities $\zb\omega_t$ for all $t\in(0,T)$.
According to \autoref{th:infReconR}, we can obtain $R_t$ from the angular velocity~$\zb \omega_t$ with the corresponding coefficient matrix $W_t$ by solving
the initial value problem \eqref{eq:ode} 
which has a unique solution $R_t\in\SO$ by~\cite[Section~IV.4]{HaiLubWan06}.
Numerically, we solve \eqref{eq:ode} with the forward \emph{Euler method} on $\R^{3 \times 3}$
followed by a so-called \emph{retraction} $\text{P}_{R_t}$, see \cite{ABS2008}, 
which maps the tangential vectors from the tangent space $\{R_t S: S \in \R^{3\times3} \text{ skew-symmetric} \}$
at $R_t \in \SO$ to $\SO$, in each iteration step.
More precisely, using discrete time steps $t_j \coloneqq j/n$ of resolution $n\in\N$, we compute for $j=0,\ldots,\lfloor Tn\rfloor$,
the reconstructed rotation matrix $\mathbf R_j$ by
\begin{equation} \label{eq:Euler-proj}
\begin{aligned}
  \mathbf R_0
  &\coloneqq I,
  \\
  \mathbf R_{{j+1}} &\coloneqq \text{P}_{\mathbf R_{j}}( (t_{j+1}-t_j)\,  \mathbf R_{j} W_{t_j}).
\end{aligned}
\end{equation}
Since the manifold $\SO$ is smooth and if we further assume the slightly higher regularity $R\in C^2([0,T]\to\SO)$, it is known that this method
converges with the same order as the classical Euler method, see \cite[Section~IV.4]{HaiLubWan06}.
The reconstruction is summarized in \autoref{alg:infR}.

Several retractions, that are  computations of $\text{P}_{\mathbf R_{j}}$, are possible in \eqref{eq:Euler-proj}, see \cite[Example 1.4.2]{ABS2008}, 
and we state two popular ones in the following.
If a large number of computations is necessary, e.g., when training neural networks, the chosen method influences the computational time substantially, 
see \cite{HHNPSS2020}.
\begin{enumerate}
\item Polar decomposition: Starting with the singular value decomposition $A = U\Sigma V^\top$ of a matrix $A\in\R^{3\times3}$ 
with diagonal matrix $\Sigma$ and orthogonal matrices $U$ and $V$,
its polar decomposition is given by  $A = \text{Polar}(A)\tilde\Sigma$,
where $\text{Polar}(A) \coloneqq U V^\top$ and $\tilde\Sigma \coloneqq V\Sigma V^\top$. 
If $\det A>0$, then $\text{Polar}(A)\in\SO$.
It is well-known, see, e.g.,  \cite{Moa02}, 
that $\text{Polar}(A)$ is the orthogonal projection of $A$ onto $\SO$ with respect to the 
Frobenius norm $\| \cdot \|_F$, i.e.,
$
  \text{Polar}(A) = \operatorname{argmin}_{Q \in \SO} \|A - Q\|_F.
$
Hence a retraction is given by 
\begin{equation} \label{eq:Polar}
\text{P}_{\mathbf R_{j}} (\tilde W) = \text{Polar}(\mathbf R_{j} + \tilde W).
\end{equation}
\item Cayley transform:  
Based on the \emph{Cayley transform},
a retraction is given for a skew-symmetric matrix $\tilde W\in\R^{3\times3}$ by
\begin{equation} \label{eq:Cay}
  \text{P}_{\mathbf R_{j}} (\tilde W)=
  {\mathbf R_{j}} \operatorname{Cay} ({\tilde W})
  ,\quad \text{where} \quad
  \operatorname{Cay} (\tilde W)
  \coloneqq
  (I- \tfrac12 \tilde W)^{-1}\,(I+ \tfrac12{\tilde W}).
\end{equation}
\end{enumerate}

\begin{algorithm}
  \KwIn{Angular velocity $\zb\omega_{t_j}$ on a grid $t_j = j/n$ for $j=1,\dots,\lfloor Tn\rfloor$.}
    
  Set $\mathbf R_0 \coloneqq I$\;
  
  \For{$j=0,\dots,\lfloor Tn\rfloor$}{
    Compute $W_{t_j}$ by \eqref{eq:W}\;
    
    Compute $\mathbf R_{{j+1}} \coloneqq \text{P}_{\mathbf R_{j}}( (t_{j+1}-t_j)\,  \mathbf R_{j} W_{t_j})$,
    where $\text{P}_{\mathbf R_{j}}$ is either \eqref{eq:Polar} or \eqref{eq:Cay}\;
  }
  
  \KwOut{Rotation $R_{t_j} \approx \mathbf R_{j}$.}
  
  \caption{Reconstruction of the rotation matrices $R_t$ with the infinitesimal method}
  \label{alg:infR}
\end{algorithm}

\subsection{Reconstruction of the translations}
The reconstruction of the translations $\bd_t$ is based on \autoref{th:reconTransl}.
We assume that the rotations $R_t$ are known from the section above.
We numerically solve
the nonlinear equations \eqref{eq:common-circle:translation} and \eqref{eq:common-circle:translationDual} for some $s,t\in[0,T]$ with the following approach.

The left-hand side of \eqref{eq:common-circle:translation} is continuous with respect to $\beta$,
and the term $\bsigma_{s,t}(\beta)$ in its exponent vanishes for $\beta=0$. 
We couple the logarithm of \eqref{eq:common-circle:translation} with a phase unwrapping \cite{Ito82}, which selects the correct branch of the complex logarithm by imposing the continuity of the desired function.
Note that the branches of the logarithm differ by adding $2\pi \i$.
We obtain
the linear system
\begin{equation} \label{eq:common-circle:unwrap}
  \inner{R_t\bd_t - R_s\bd_s}{\bsigma_{s,t}(\beta)}
  =
  \operatorname{unwrap} \left( \frac1\i\log \left( \frac{\mu_s(\bgam_{s,t}(\beta))}{\mu_t(\bgam_{t,s}(-\beta))} \right) \right)
  , \qquad \beta\in J,
\end{equation}
where $\operatorname{unwrap}$ denotes a phase unwrapping  that vanishes at $\beta=0$ and $J\subset J_{s,t}$ is an interval around $0$ on which $\mu_s\circ\bgam_{s,t}$ is nowhere zero.
Discretizing the interval $J$, we see that \eqref{eq:common-circle:unwrap} is a linear system of equations in $\bd_{t;s}$.
Analogously, we obtain from \eqref{eq:common-circle:translationDual} the equation
\begin{equation} \label{eq:common-circle:unwrapDual}
  \inner{R_t\bd_t - R_s\bd_s}{\bsigma^*_{s,t}(\beta)}
  =
  \operatorname{unwrap} \left( \frac1\i\log \left( \frac{\mu_s(\bgam^*_{s,t}(\beta))}{\mu_t(\bgam^*_{t,s}(\beta))} \right) \right)
  , \qquad \beta\in J.
\end{equation}
If $s=0$, we have $\bd_0=\zb0$ so that \eqref{eq:common-circle:unwrap} and \eqref{eq:common-circle:unwrapDual}
contain as unknown only $\bd_t$;
then we reconstruct $\bd_t$ as minimum norm solution fulfilling both \eqref{eq:common-circle:unwrap} and \eqref{eq:common-circle:unwrapDual}.
The procedure is summarized in \autoref{alg:translation}.
Note that \autoref{th:common-circle:translation} guarantees a unique solution of the continuous problem.
In order to improve the reconstruction for inexact data, we can also consider \eqref{eq:common-circle:unwrap} and \eqref{eq:common-circle:unwrapDual} for many pairs of $s$ and $t$ resulting in a large system of equation and solve for $\bd_t$ for all $t$ simultaneously, again incorporating $\bd_0 = \zb0$.

\begin{algorithm}
  \KwIn{Scaled squared energy $\mu_t$ from \eqref{eq:nu-F-complex} and rotations $R_t$.}
  
  Set $\bd_0 \coloneqq \zb0$\;
  
  \For{$j=0,\dots,\lfloor Tn\rfloor$}{
    Compute $\bd_{t_j}$ as the minimum norm least squares solution of both \eqref{eq:common-circle:unwrap} and \eqref{eq:common-circle:unwrapDual} with $s=0$\;
  }
  
  \KwOut{Translations $\bd_{t_j}$.}
  
  \caption{Reconstruction of the translation $\bd_t$}
  \label{alg:translation}
\end{algorithm}


\section{Numerical Simulations} \label{sec:numerics}
We perform numerical tests of the reconstruction algorithms from \autoref{sec:recon}.
We compare the approaches of Sections~\ref{sec:recDirect} and \ref{sec:recInf} for the case that the motion depends smoothly on time.
We consider two three-dimensional test functions for the scattering potential $f$, 
namely a cell phantom in \autoref{fig:cell}, which consists of different convex and concave shapes with constant function values,
and the Shepp--Logan phantom in \autoref{fig:sl}.
Both are evaluated on a uniform $N\times N\times N$ grid with $N=160$
and they are not rotationally symmetric. 
Otherwise, any symmetry would cause the motion detection to have multiple solutions.

\tikzset{font=\footnotesize}
\pgfplotsset{
  colormap={parula}{
    rgb255=(53,42,135)
    rgb255=(15,92,221)
    rgb255=(18,125,216)
    rgb255=(7,156,207)
    rgb255=(21,177,180)
    rgb255=(89,189,140)
    rgb255=(165,190,107)
    rgb255=(225,185,82)
    rgb255=(252,206,46)
    rgb255=(249,251,14)}}

\begin{figure}[!htp]\centering
  \begin{subfigure}{.49\textwidth}
      \begin{tikzpicture}
        \begin{axis}[
          width=.77\textwidth, 
          height=.616\textwidth, 
          enlargelimits=false,
          scale only axis,
          axis on top,
          colorbar,colorbar style={
            width=.15cm, xshift=-0.6em 
            },
          ]
          \addplot[point meta min=-0.09,point meta max=1.09] graphics [
          xmin=-28.28, xmax=28.28,
          ymin=-28.28, ymax=28.28,
          ] {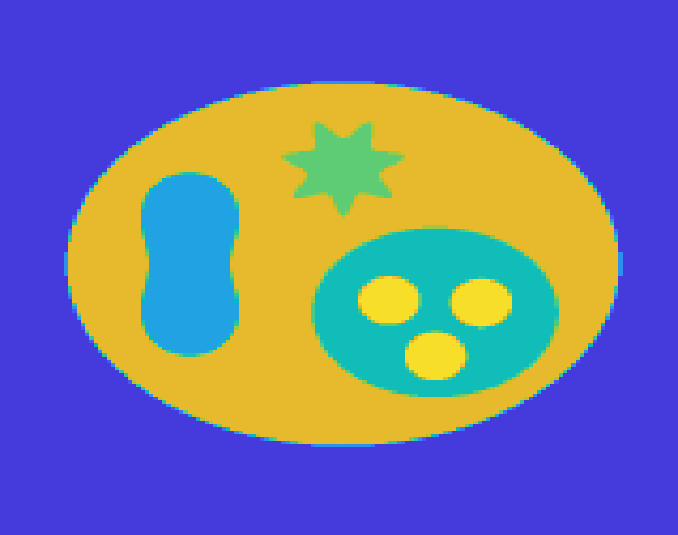};
        \end{axis}
      \end{tikzpicture}
      \caption{Cell phantom \label{fig:cell}}
    \end{subfigure}
  \begin{subfigure}{.49\textwidth}
  \begin{tikzpicture}
    \begin{axis}[
      width=.77\textwidth, 
      height=.616\textwidth, 
      enlargelimits=false,
      scale only axis,
      axis on top,
      colorbar,colorbar style={
        width=.15cm, xshift=-0.6em 
      },
      ]
      \addplot[point meta min=-0.09,point meta max=1.09] graphics [
      xmin=-28.28, xmax=28.28,
      ymin=-28.28, ymax=28.28,
      ] {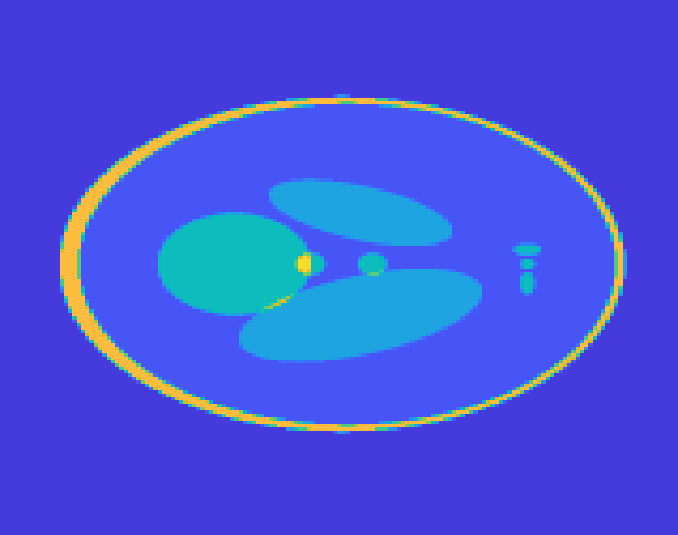};
    \end{axis}
  \end{tikzpicture}
  \caption{Shepp--Logan phantom \label{fig:sl}}
\end{subfigure}
    \caption{
      Slice plots the 3D phantoms $f$ at $x_3=0$.
      \label{fig:f}}
\end{figure}

The data $\nu_t$ is computed ``in silico'' via a numerical approximation of the Fourier transform $\mathcal F [f]$, where $f$ is discretized on a finer grid of $(3N)^3\approx 1.1\cdot10^8$ points.
This approximation is done with the nonuniform fast Fourier transform (NFFT) algorithm \cite{st97}, the same way as in \cite{KirQueRitSchSet21}.
We evaluate $\nu_t$ on a polar grid $(r \cos(\phi),r\sin(\phi))^\top$ on $\mathcal B^2_{k_0}$ consisting of $2N$ points in $r\in(-k_0,k_0)$ and $2N$ points of $\phi\in[0,\pi)$.
We set the wave number $k_0=2\pi$, which corresponds to a wavelength of one of the incident wave.
Furthermore, we have $4N=640$ equispaced samples of the time $t\in[0,2\pi)$.
The high number of grid points yields in a good numerical approximation of the time-derivative.
In total, we sample $\nu$ on about 65 million data points.
We first consider the case that the object is only rotated, but not translated, then we utilize the complex-valued $\mu_t$ to reconstruct the rotation, see \autoref{rem:only-rotation}.

\paragraph{Infinitesimal method}

Let
$\S^2
\coloneqq
\{\bx\in\R^2 : \norm{\bx} = 1\}
$
denote the two-dimensional sphere.
We first consider a constant rotation axis $\bn \in\mathbb S^2$ and the rotation angle $t\in[0,2\pi]$, that is, $R(t)=\exp(tN)$ with $N\in\R^{3\times3}$ defined by $N\bx=\bn\times\bx$ for all $\bx\in\R^3$, since we then have, according to Rodrigues' rotation formula, $R(t)\bx=\inner{\bn}{\bx}\bn+\sin(t)\bn\times\bx+\cos(t)(\bn\times\bx)\times\bn$ for all $\bx\in\R^3$. The angular velocity is in this case therefore the constant function $\zb\omega_t = \bn$.
In \autoref{fig:j}, we show the misfit functional $j$ from \eqref{eq:j} for the rotation axis $\bn=(0.96\cos(\pi/4),0.96\sin(\pi/4),0.28)^\top$ at the time $t=\pi/4$.
One can clearly spot the expected minimum of $j$ at $\hat\phi=\pi/4$.
Furthermore, we show in \autoref{fig:omega-error} the error of the angular velocity reconstructed with \autoref{alg:infOmega} for all time steps $t$ corresponding to a full turn of the object.
We note that the radius $\rho_t$ has a higher error than the other components.

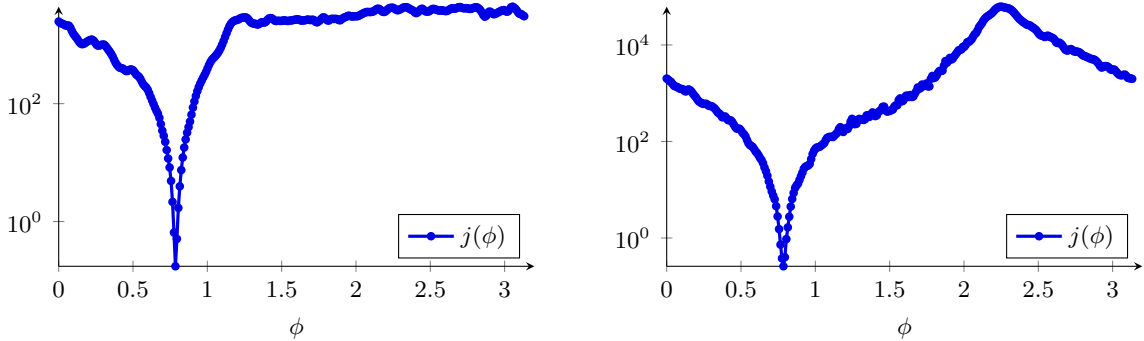
\begin{figure}[!ht]\centering
  \begin{tikzpicture}
    \begin{semilogyaxis}[xlabel=$\phi$, width=.49\textwidth, height=0.314\textwidth, xmin=0, xmax=3.2, 
      axis x line=bottom, axis y line=left, legend pos=south east, legend style={cells={anchor=west}},
      ]
      \addplot+[very thick,blue!90!black, mark size=1pt,
      ] table[x index=0,y index=1] {dat/cell3d-j.dat};
      \legend{$j(\phi)$};
    \end{semilogyaxis}
\begin{scope}[xshift=8cm]
  \begin{semilogyaxis}[xlabel=$\phi$, width=.49\textwidth, height=0.314\textwidth, xmin=0, xmax=3.2, 
    axis x line=bottom, axis y line=left, legend pos=south east, legend style={cells={anchor=west}},
    ]
    \addplot+[very thick,blue!90!black, mark size=1pt,
    ] table[x index=0,y index=1] {dat/sl-j.dat};
    \legend{$j(\phi)$};
  \end{semilogyaxis}
\end{scope}
  \end{tikzpicture}
  \caption{Plot of the function $j$ for time step $t=\pi/4$. 
  {\em Left:} cell phantom, {\em right:} Shepp--Logan.
    \label{fig:j}}
\end{figure}

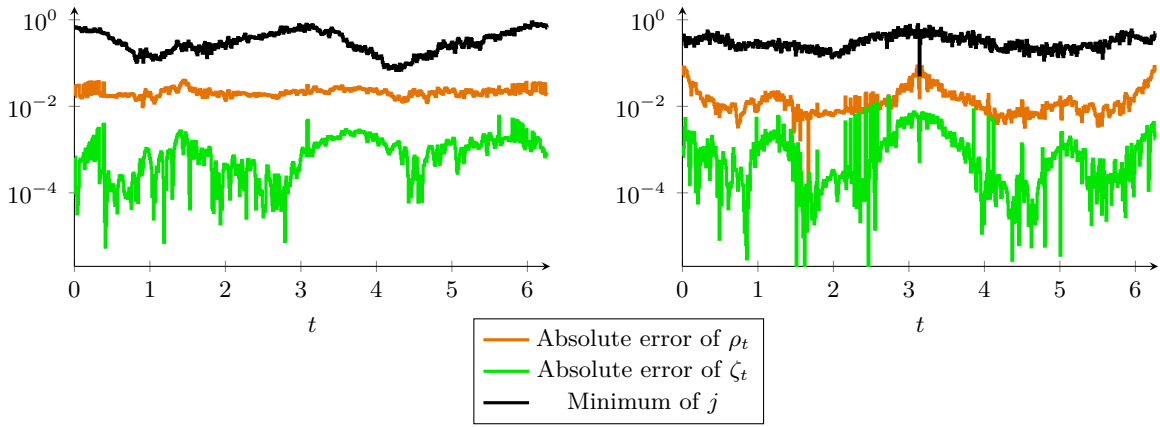
\begin{figure}[!ht]\centering
  \begin{tikzpicture}
    \begin{semilogyaxis}[xlabel=$t$, width=0.49\textwidth, height=0.314\textwidth, xmin=0, xmax=6.3, ymin=0.000002, ymax=2, 
      axis x line=bottom, axis y line=left,
      legend style={at={(1.45,-0.2)}},
      ]
      \addplot+[very thick,orange!90!black, const plot mark mid,mark=none
      ] table[x index=0,y index=1] {dat/cell3d-err.dat};
      \addplot+[very thick,green!90!black, const plot mark mid,mark=none
      ] table[x index=0,y index=3] {dat/cell3d-err.dat};
      \addplot+[very thick,black, const plot mark mid,mark=none
      ] table[x index=0,y index=4] {dat/cell3d-err.dat};
      \legend{Absolute error of $\rho_t$,
        Absolute error of $\zeta_t$, Minimum of $j$};
    \end{semilogyaxis}
\begin{scope}[xshift=8cm]
  \begin{semilogyaxis}[xlabel=$t$, width=0.49\textwidth,  height=0.314\textwidth, xmin=0, xmax=6.3, ymin=0.000002, ymax=2, 
    axis x line=bottom, axis y line=left,
    ]
    \addplot+[very thick,orange!90!black, const plot mark mid,mark=none
    ] table[x index=0,y index=1] {dat/sl-err.dat};
    \addplot+[very thick,green!90!black, const plot mark mid,mark=none
    ] table[x index=0,y index=3] {dat/sl-err.dat};
    \addplot+[very thick,black, const plot mark mid,mark=none
    ] table[x index=0,y index=4] {dat/sl-err.dat};
  \end{semilogyaxis}
\end{scope}
  \end{tikzpicture}
  \caption{
    Absolute error of the components \eqref{eq:omega-decomp} of the angular velocity $\zb\omega_t$
    and the minimum of the functional $j_t$, depending on $t$. 
    The reconstructed value for $\phi_t$ takes only values on the grid, 
    and in this case it is reconstructed exactly; note that the true value is also on the grid.
    {\em Left:} cell phantom, {\em right:} Shepp--Logan phantom.
    \label{fig:omega-error}}
\end{figure}

Inserting the reconstructed angular velocity,
we apply \autoref{alg:infR} to obtain the rotation matrices.
The reconstructions are denoted as 
$\mathbf R_t^{\mathrm{Pol}}$ with the polar decomposition \eqref{eq:Polar},
and $\mathbf R_t^{\mathrm{Cay}}$ with the Cayley transform \eqref{eq:Cay} as retraction.
The resulting error, measured in the Frobenius norm, is shown in \autoref{fig:R-error},
where we see that 
both retractions perform almost equally.

\begin{remark}[Sampling]
  Here, we assume that $\nu_t$ is given on a polar grid in order to easily compute the derivatives in \eqref{eq:gh},
  which we approximate numerically by central differences on the polar grid.
  However, the numerical reconstruction of $f$ for known rotations seems to be a little worse than with a uniform, rectangular grid for $\nu_t$ as considered in \cite{KirQueRitSchSet21}.
  However, the actual experiment takes measurements of the scattered wave $u_t$.
  Then $\nu_t$ is related to $u_t$ via a 2D Fourier transform in \eqref{eq:nu}.
  It seems natural that the images of $u_t$ are captured on a uniform grid.
  A canonical discretization of \eqref{eq:nu} is the fast Fourier transform, which gives an approximation of $\nu_t$ on a uniform grid, cf.\ \cite{BeiQue22}.
  Nevertheless, the nonuniform fast Fourier transform \cite[Section~7]{PlPoStTa18} can be used to evaluate $\nu_t$ accurately on any set such as a polar grid.
\end{remark}

\paragraph{Combination of the infinitesimal with the direct common circle method}

The error of the reconstruction in \autoref{fig:R-error} based on the infinitesimal method grows with the time $t$.
This behavior is quite expected since we make a small error in each time step and the errors accumulate.
In order to get a better reconstruction, we use the direct common circle method in \autoref{alg:cc}.
We minimize the functional $\mathcal E_{s,t}$, given in \eqref{eq:dist}, over $\SO$ iteratively with
the Nelder--Mead downhill simplex method \cite{LagReeWriWri98} implemented in Matlab's \texttt{fminsearch} routine, which does not require derivatives.
As starting solution, where we insert the Euler angles of $\mathbf R_t^{\mathrm{Cay}}$ reconstructed with the infinitesimal method as above.
The evaluation of $\nu_t$, which is sampled on a polar grid, 
at the curves $\bgam^{\varphi,\theta}$ utilizes cubic spline interpolation.
The error of the reconstruction with this combined approach is shown in \autoref{fig:R-error}.
We see that the reconstruction greatly benefits from the combined approach.

Furthermore, we have noticed that taking a random starting solution for the optimization of \eqref{eq:dist} often yields very bad results,
since $\mathcal E_{s,t}$ might have multiple local minima.
A possible approach would consist in evaluating $\mathcal E_{s,t}$ on a grid in $\R^3$ and taking the minimum or by using multiple random starting solutions.
However, this seems unnecessary, since we can rely on the good starting solution obtained with the infinitesimal method.

\newcommand{\ymin}{3e-7}
\newcommand{\ymax}{.1}
\begin{figure}[!ht]\centering
\begin{tikzpicture}
    \begin{semilogyaxis}[xlabel=$t$, width=0.49\textwidth, height=0.343\textwidth, xmin=0, xmax=6.3, ymin=\ymin, ymax=\ymax, 
      legend style={at={(1.45,-0.2)}},
      yticklabel style={
        /pgf/number format/fixed,
        /pgf/number format/precision=4
      },
      scaled y ticks=false
      ]
      \addplot+[blue!90!black, mark=none, very thick,
      ] table[x index=0,y index=5] {dat/cell3d-err.dat};
      \addplot+[green!90!black, mark=none, very thick, dashed,
      ] table[x index=0,y index=6] {dat/cell3d-err.dat};
      \addplot+[red!90!black, mark=none, very thick, 
      ] table[x index=0,y index=7] {dat/cell3d-err.dat};
      \legend{$\norm{R_t - \mathbf R_t^{\mathrm{Pol}}}_{{F}} / \norm{R_t}_{{F}} $,
        $\norm{\smash{R_t - \mathbf R_t^{\mathrm{Cay}}}}_{{F}}/ \norm{R_t}_{{F}}$,
        $\norm{R_t - \mathbf R_t^{\mathrm{CC}}}_{{F}}/ \norm{R_t}_{{F}}$};
    \end{semilogyaxis}
	\begin{scope}[xshift=8cm]
    \begin{semilogyaxis}[xlabel=$t$, width=0.49\textwidth, height=0.343\textwidth, xmin=0, xmax=6.3, ymin = \ymin, ymax=\ymax, 
    yticklabel style={
      /pgf/number format/fixed,
      /pgf/number format/precision=4
    },
    scaled y ticks=false
    ]
    \addplot+[blue!90!black, mark=none, very thick,
    ] table[x index=0,y index=5] {dat/sl-err.dat};
    \addplot+[green!90!black, mark=none, very thick, dashed,
    ] table[x index=0,y index=6] {dat/sl-err.dat};
    \addplot+[red!90!black, mark=none, very thick, 
    ] table[x index=0,y index=7] {dat/sl-err.dat};
  \end{semilogyaxis}
  \end{scope}
  \end{tikzpicture}
  \caption{
    Relative error of the reconstructed rotation matrices $\mathbf R_t^{\mathrm{Pol}}$ and $\mathbf R_t^{\mathrm{Cay}}$ using Euler's method \eqref{eq:Euler-proj}
    with the polar decomposition \eqref{eq:Polar}
    or the Cayley transform \eqref{eq:Cay}, respectively.
    Furthermore, $R_t^{\mathrm{CC}}$ refers to the rotation matrix reconstructed with the minimization of \eqref{eq:dist} to find the common circles,
    where the starting solution of the optimization was in each step computed with the infinitesimal method and the Cayley transform as above.
    {\em Left:} cell phantom, {\em right:} Shepp--Logan phantom.
    \label{fig:R-error}}
\end{figure}
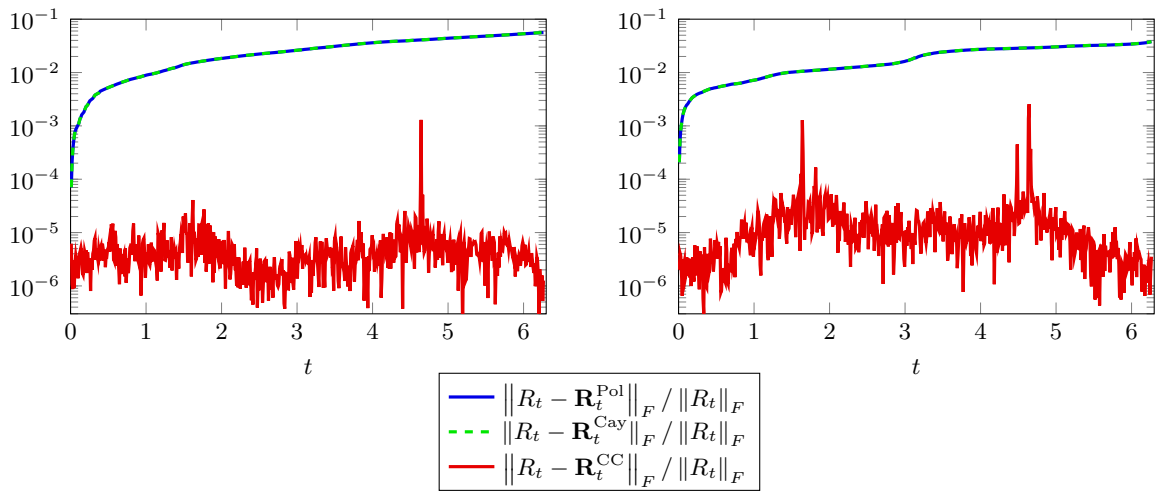

\paragraph{Moving rotation axis}

In our next simulation, we consider the time-dependent rotation axis
$
\bn(t)
=
(
\sqrt{1-a^2} \cos(b\,\sin(t/2)),
\sqrt{1-a^2} \sin(b\,\sin(t/2)),
a)^\top
\in\S^2
$
for $a=0.28$ and $b=0.5$.
The obtained error is shown in \autoref{fig:R-error-movaxis}.
Overall, the results are similar to the ones for the constant rotation axis.
However,  
there is a larger error  around $t\approx0$,
which might be explained by the fact that for a small rotation the respective hemispheres and thus also the data $\nu_t$ and $\nu_0$ are very close together, 
which makes detecting the common circles harder.
This could be circumvented by applying the common circles method to rotations that are farther apart.
A similar observation was made that the common lines in context of the ray transform
also become harder to detect in case of very small rotations where the infinitesimal method suits better, see \cite{ElbRitSchSchm20}.
\begin{figure}[!ht]\centering
    \begin{tikzpicture}
      \begin{semilogyaxis}[xlabel=$t$, width=0.49\textwidth, height=0.343\textwidth, xmin=0, xmax=6.3, ymin=\ymin, ymax=\ymax, 
        legend style={at={(1.45,-0.2)}},
        yticklabel style={
          /pgf/number format/fixed,
          /pgf/number format/precision=4
        },
        scaled y ticks=false
        ]
        \addplot+[blue!90!black, mark=none, very thick,
        ] table[x index=0,y index=5] {dat/cell3d-movaxis-err.dat};
        \addplot+[green!90!black, mark=none, very thick, dashed,
        ] table[x index=0,y index=6] {dat/cell3d-movaxis-err.dat};
        \addplot+[red!90!black, mark=none, very thick, 
        ] table[x index=0,y index=7] {dat/cell3d-movaxis-err.dat};
        \legend{$\norm{R_t - \mathbf R_t^{\mathrm{Pol}}}_{{F}} / \norm{R_t}_{{F}} $,
          $\norm{\smash{R_t - \mathbf R_t^{\mathrm{Cay}}}}_{{F}}/ \norm{R_t}_{{F}}$,
          $\norm{R_t - \mathbf R_t^{\mathrm{CC}}}_{{F}}/ \norm{R_t}_{{F}}$};
      \end{semilogyaxis}

	\begin{scope}[xshift=8cm]
      \begin{semilogyaxis}[xlabel=$t$, width=0.49\textwidth, height=0.343\textwidth, xmin=0, xmax=6.3, ymin=\ymin, ymax=\ymax, 
        yticklabel style={
          /pgf/number format/fixed,
          /pgf/number format/precision=4
        },
        scaled y ticks=false
        ]
        \addplot+[blue!90!black, mark=none, very thick,
        ] table[x index=0,y index=5] {dat/sl-movaxis-err.dat};
        \addplot+[green!90!black, mark=none, very thick, dashed,
        ] table[x index=0,y index=6] {dat/sl-movaxis-err.dat};
        \addplot+[red!90!black, mark=none, very thick, 
        ] table[x index=0,y index=7] {dat/sl-movaxis-err.dat};
      \end{semilogyaxis}
	\end{scope}
    \end{tikzpicture}
  \caption{
    Relative error of the rotation matrix $R_t$, reconstructed using the same setup and methods as in \autoref{fig:R-error}, but with a moving rotation axis.
    \label{fig:R-error-movaxis}}
\end{figure}
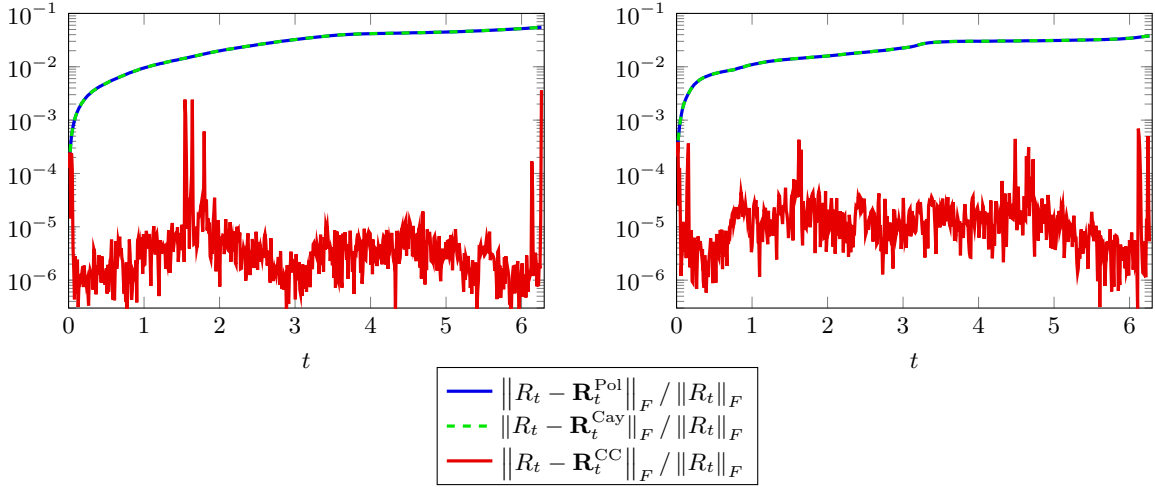

\paragraph{Reconstruction of the translation}
Now, we consider the case that the object also moves according to
the translation
$
\bd_t
=
4 (\sin t, \sin t, \sin t)^\top$,
$
t\in[0,2\pi],
$
and the rotation is the same as in the previous example with the moving axis.
We first reconstruct the rotations with the same methods as above.
Afterwards, we recover the translations $\bd_t$ by \autoref{alg:translation}.
The reconstruction error is shown in \autoref{fig:translation},
where we see that the translation is reconstructed quite reliably.
The error of the rotation is larger than in the case without translation, 
but it is still on an acceptable level. 
This is because we could only use the real-valued $\nu_t$ for reconstructing the rotations as the translations do not vanish, cf.\ \autoref{rem:only-rotation}.
Especially for large translations, we have noted in the simulations that the unwrapping in \eqref{eq:common-circle:unwrap} does not always yield good results because of the inexactness of the data.
This could be mitigated by combining it with a nonlinear optimization directly applied to \eqref{eq:common-circle:translation} and \eqref{eq:common-circle:translationDual}.

\begin{figure}[!ht]\centering
  \renewcommand{\ymin}{8e-6}
  \renewcommand{\ymax}{.2}
  \begin{tikzpicture}
    \begin{semilogyaxis}[xlabel=$t$, width=.49\textwidth, height=0.34\textwidth, xmin=0, xmax=6.3, ymin=\ymin, ymax=\ymax, 
      legend style={at={(1.45,-0.2)}},
      yticklabel style={
        /pgf/number format/fixed,
        /pgf/number format/precision=4
      },
      scaled y ticks=false
      ]
      \addplot+[blue!90!black, mark=none, very thick,
      ] table[x index=0,y index=5] {dat/cell3d-translation-err.dat};
      \addplot+[green!90!black, mark=none, very thick, dashed,
      ] table[x index=0,y index=6] {dat/cell3d-translation-err.dat};
      \addplot+[red!90!black, mark=none, thick, 
      ] table[x index=0,y index=7] {dat/cell3d-translation-err.dat};
      \addplot+[yellow!50!black, mark=none, thick, 
      ] table[x index=0,y index=9] {dat/cell3d-translation-err.dat};
      \legend{$\norm{R_t - \mathbf R_t^{\mathrm{Pol}}}_{{F}} / \norm{R_t}_{{F}} $,
        $\norm{\smash{R_t - \mathbf R_t^{\mathrm{Cay}}}}_{{F}}/ \norm{R_t}_{{F}}$,
        $\norm{R_t - \mathbf R_t^{\mathrm{CC}}}_{{F}}/ \norm{R_t}_{{F}}$,
        $\norm{\bd_t - \mathtt d_t}$};
    \end{semilogyaxis}
    
    \begin{scope}[xshift=8cm]
      \begin{semilogyaxis}[xlabel=$t$, width=.49\textwidth, height=0.34\textwidth, xmin=0, xmax=6.3, ymin=\ymin, ymax=\ymax, 
        legend style={at={(0.65,-0.25)}},
        yticklabel style={
          /pgf/number format/fixed,
          /pgf/number format/precision=4
        },
        scaled y ticks=false
        ]
        \addplot+[blue!90!black, mark=none, very thick,
        ] table[x index=0,y index=5] {dat/sl-translation-err.dat};
        \addplot+[green!90!black, mark=none, very thick, dashed,
        ] table[x index=0,y index=6] {dat/sl-translation-err.dat};
        \addplot+[red!90!black, mark=none, thick, 
        ] table[x index=0,y index=7] {dat/sl-translation-err.dat};
        \addplot+[yellow!50!black, mark=none, thick, 
        ] table[x index=0,y index=9] {dat/sl-translation-err.dat};
      \end{semilogyaxis}
    \end{scope}
  \end{tikzpicture}
  
  \caption{
    Error of the reconstructed rotation matrix and reconstructed translation $\mathtt d_t$,
    for the case of a non-zero translation.
    Note that $\norm{\bd_t}$ varies between 0 and approximately $6.9$, so it does not make sense to compute relative errors.
    {\em Left:} cell phantom,
    {\em Right:} Shepp--Logan phantom.
    \label{fig:translation}}
\end{figure}
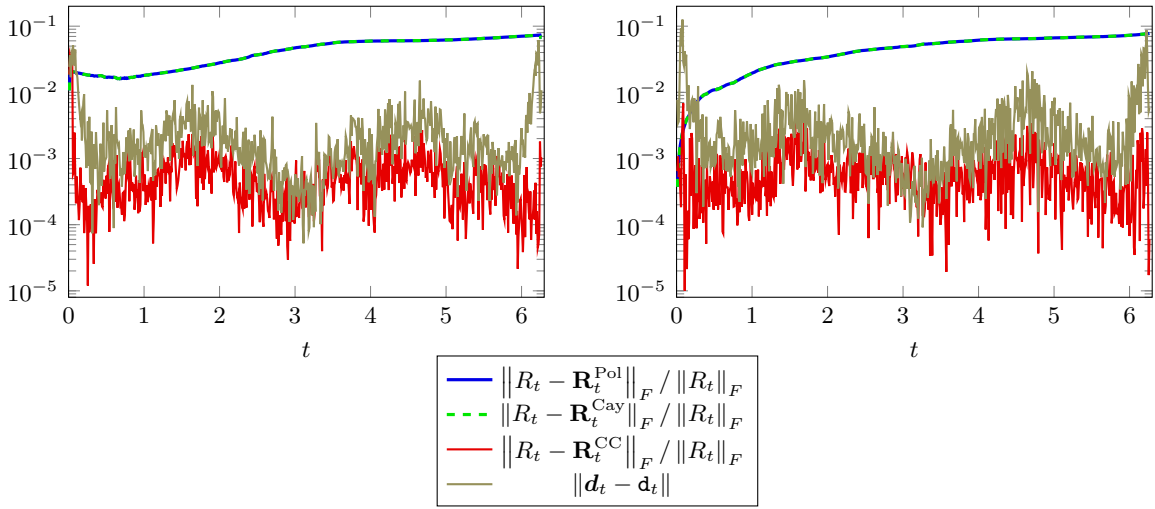

Finally, we show in \autoref{fig:translation-rec} the reconstructed images of the scattering potential $f$, where we have first computed the rotations and translations with the above combined common circle method for the moving rotation axis.
In the second part, i.e., the image reconstruction with known motion,
we use the nonuniform Fourier reconstruction technique from \cite{KirQueRitSchSet21}.
For the image reconstruction, we evaluate $\mu_t$ on a uniform grid instead of the polar grid used for the common circle method,
since the reconstruction of the image $f$ for a polar grid shows an inferior quality.
This observation is consistent with numerical evidence in \cite{FeKuPo06},
which showed that an approximate inversion of discrete Fourier transforms on a two-dimensional polar grid often shows large errors even for a very large number of sampling points.

\begin{figure}[!htp]\centering
\begin{subfigure}{.49\textwidth}
  \begin{tikzpicture}
    \begin{axis}[
      width=.77\textwidth, 
      height=.616\textwidth, 
      enlargelimits=false,
      scale only axis,
      axis on top,
      colorbar,colorbar style={
        width=.15cm, xshift=-0.6em
      },
      ]
      \addplot[point meta min=-0.09,point meta max=1.09] graphics [
      xmin=-28.28, xmax=28.28,
      ymin=-28.28, ymax=28.28,
      ] {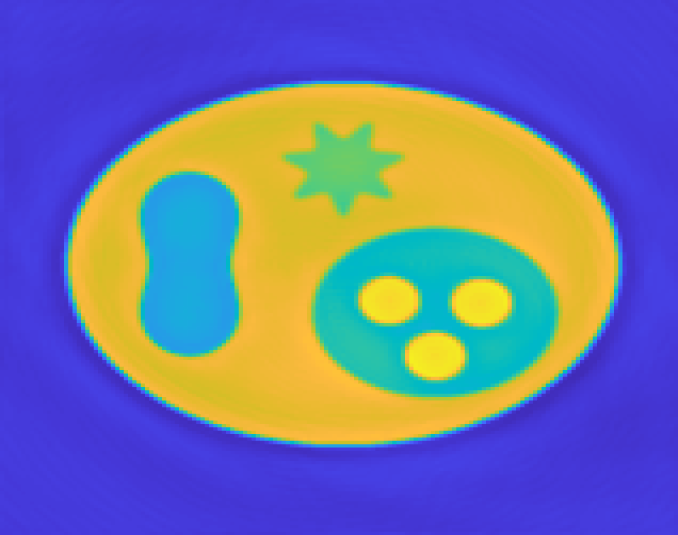};
    \end{axis}
  \end{tikzpicture}
  \caption{Cell phantom (PSNR 32.21, SSIM 0.754)}
\end{subfigure}
\hfil
\begin{subfigure}{.49\textwidth}
  \begin{tikzpicture}
    \begin{axis}[
      width=.77\textwidth, 
      height=.616\textwidth, 
      enlargelimits=false,
      scale only axis,
      axis on top,
      colorbar,colorbar style={
        width=.15cm, xshift=-0.6em
      },
      ]
      \addplot[point meta min=-0.09,point meta max=1.09] graphics [
      xmin=-28.28, xmax=28.28,
      ymin=-28.28, ymax=28.28,
      ] {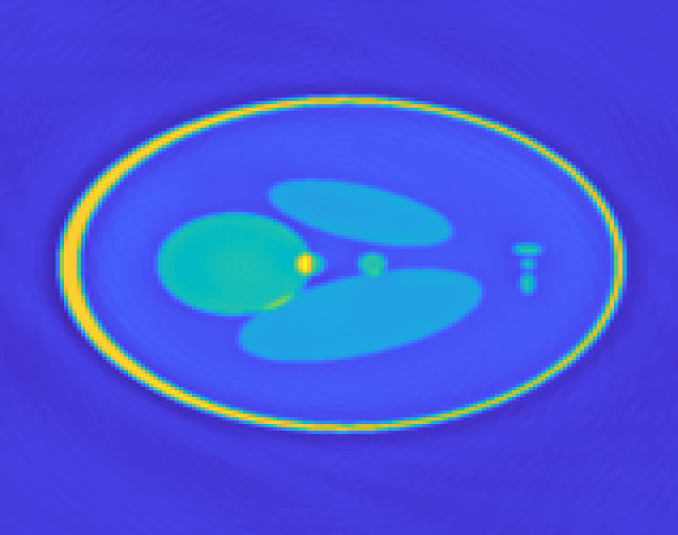};
    \end{axis}
  \end{tikzpicture}
  \caption{Shepp--Logan (PSNR 30.85, SSIM 0.772)}
\end{subfigure}
\caption{
  Slice plots of the reconstructed scattering potential $f$, where the rotation and translation was estimated with the common circle method as in \autoref{fig:translation}.
  The image quality is assessed via the peak signal-to-noise ratio (PSNR) and the structural similarity index (SSIM).
  \label{fig:translation-rec}}
\end{figure}

\paragraph{Computational time}

The numerical simulations were performed with Matlab on a standard PC with an 8-core Intel i7-10700 processor and 32 GB of memory. We utilized the NFFT software package \cite{KeKuPo09,nfft3} for the Fourier transforms. The reconstruction of the motion parameters for all 640 time steps as in \autoref{fig:translation} took about 40 seconds.
The image reconstruction in \autoref{fig:translation-rec} took about 90 seconds.

\section{Conclusions}
\label{sec:conclusions}

In this paper, we have considered the reconstruction of the motion of an object in diffraction tomography.
For the reconstruction of the rotation, we have presented a common circle method and its infinitesimal version.
While the former method usually produced more accurate results,
it benefits from using a starting solution with the computationally faster infinitesimal approach.
Furthermore, we have shown that also the translation of the object can be uniquely recovered from the diffraction data.
For this, we have required that the scattering potential is real-valued.
We note that, in contrast to projection images corresponding to the ray transform,
also the position and orientation of the object in direction of the incident wave can be detected here.

Future research will focus on the real-world application related to optical diffraction tomography with acoustical tweezers.
Furthermore, we intend to incorporate phase retrieval methods for the motion detection
since often only the intensities of the field $\us+\ui$ can be measured.

\section*{Acknowledgments}
Funding by the DFG under the SFB ``Tomography Across the Scales'' (STE 571/19-1, project number: 495365311) is gratefully acknowledged. 
Moreover, PE and OS are supported by the Austrian Science Fund (FWF),
with SFB F68 ``Tomography Across the Scales'', project F6804-N36 and F6807-N36.
The financial support by the Austrian Federal Ministry for Digital and Economic
Affairs, the National Foundation for Research, Technology and Development and the Christian Doppler
Research Association is gratefully acknowledged.
This research was funded in whole, or in part, by the Austrian Science
Fund (FWF) P 34981. For the purpose of open access, the authors have applied
a CC BY public copyright license to any Authors Accepted Manuscript version arising
from this submission.

\bibliographystyle{abbrv}
\bibliography{nocsc}

\appendix
\section{Proofs of \autoref{sec:comm_circ}}
\label{sec:proof}
\begin{proof}[of \autoref{th:YsYt_param}]
  By \eqref{eq:Ht-sphere}, a point $\bx\in\R^3$ is in the intersection $\mathcal H_s\cap\mathcal H_t$ if and only if it fulfills the equations
  \begin{equation}\label{eq:YsYt_param:conditions}
    \norm{\bx+k_0R_s\be^3}^2=k_0^2\quad\text{and}\quad\norm{\bx+k_0R_t\be^3}^2=k_0^2
  \end{equation}
  and the two inequalities
  \begin{equation}\label{eq:YsYt_param:inequalities}
    \langle\bx,R_s\be^3\rangle > -k_0\quad\text{and}\quad\langle\bx,R_t\be^3\rangle > -k_0.
  \end{equation}
  Taking their sum and their difference, the two equations \eqref{eq:YsYt_param:conditions} are seen to be equivalent to
  \[ \norm{\bx+\frac{k_0}2(R_s\be^3+R_t\be^3)}^2=\frac{k_0^2}4\norm{R_s\be^3+R_t\be^3}^2\quad\text{and}\quad\langle\bx,R_s\be^3-R_t\be^3\rangle=0, \]
  meaning that $\bx$ is on a circle with radius $ a_{s,t}$ around the point $- a_{s,t}\bv_{s,t}^1$ in the subspace $\mathcal V_{s,t}$ spanned by the vectors $\bv_{s,t}^1$ and $\bv_{s,t}^2$, so that we can write
  \[ \bx =  a_{s,t}(\cos(\beta)-1)\bv_{s,t}^1+ a_{s,t}\sin(\beta)\bv_{s,t}^2\quad\text{for some}\quad\beta\in(-\pi,\pi]. \]
  
  For such a point $\bx$, the two inequalities \eqref{eq:YsYt_param:inequalities} are equivalent and reduce to the condition
  \[ \frac{k_0}2(\cos(\beta)-1)(1+\langle R_s\be^3,R_t\be^3\rangle) > -k_0,\quad\text{that is},\quad\cos(\beta)>\frac{\langle R_s\be^3,R_t\be^3\rangle-1}{\langle R_s\be^3,R_t\be^3\rangle+1} \]
  for the variable $\beta$.
\end{proof}

\begin{proof}[of \autoref{th:gamma}]
  Since $R_s\bh$ is a parameterization of $\mathcal H_s$ and $\bh$ restricted to its first two components is just the identity, we can solve the relation $R_s\bh(\bk)=\bx$ for every $\bx\in\mathcal H_s$ by orthogonally projecting $R_s^\top\bx$ onto its first two components
  \[ \bk = P(R_s^\top\bx). \]
  Therefore we find directly from the representation \eqref{eq:YsYt_param} of $\mathcal H_s\cap\mathcal H_t$ that
  \begin{equation}\label{eq:gamma1}
    \bgam_{s,t}(\beta) =  a_{s,t}(\cos(\beta)-1)P(R_s^\top\bv_{s,t}^1)+ a_{s,t}\sin(\beta)P(R_s^\top\bv_{s,t}^2).
  \end{equation}
  Further we get for the projections of the basis vectors
  \begin{equation}\label{eq:Pv}
    \begin{split}
      P(R_s^\top\bv_{s,t}^1) &= \frac{P(\be^3+R_s^\top R_t\be^3)}{\norm{R_s\be^3+R_t\be^3}} 
      = \frac{\tilde a_{s,t}}{ a_{s,t}}\bw_{s,t}^1\quad\text{and}\\
      P(R_s^\top\bv_{s,t}^2)
      &=\frac{P(\be^3\times R_s^\top R_t\be^3)}{\norm{R_s\be^3\times R_t\be^3}}=\bw_{s,t}^2,
    \end{split}
  \end{equation}
  so that the equation \eqref{eq:gamma1} for $\bgam_{s,t}$ becomes \eqref{eq:gamma}.
\end{proof}

\begin{proof}[of \autoref{th:Euler}]
  We have that
  \[ R_s^\top R_t\be^3 = Q^{(3)}(\varphi)Q^{(2)}(\theta)Q^{(3)}(\psi)\be^3 
	= Q^{(3)}(\varphi)Q^{(2)}(\theta)\be^3 = \begin{pmatrix}\cos(\varphi)\sin(\theta)\\\sin(\varphi)\sin(\theta)\\\cos(\theta)\end{pmatrix}. \]
  Plugging this into $\bgam_{s,t}$ from \eqref{eq:gamma}, we find for the lengths of the semi-axes 
  \begin{align*}
    \tilde a_{s,t} &= \frac{k_0}2\norm{P(R_s^\top R_t\be^3)} = \frac{k_0}2\sin(\theta)\quad\text{and} \\
    a_{s,t} &= \frac{k_0}2\norm{R_s\be^3+R_t\be^3} = \frac{k_0}2\norm{\be^3+R_s^\top R_t\be^3} = \frac{k_0}2\sqrt{2+2\cos(\theta)} = k_0\cos(\tfrac\theta2);
  \end{align*}
  and for the directions of the semi-axes $\bw_{s,t}^1 = \begin{psmallmatrix}\cos(\varphi)\\\sin(\varphi)\end{psmallmatrix}$ and $\bw_{s,t}^2 = \begin{psmallmatrix}-\sin(\varphi)\\\cos(\varphi)\end{psmallmatrix}$.
\end{proof}

\begin{proof}[of \autoref{th:dualCircle}]
  The proof goes along the same lines as \autoref{th:YsYt_param}, \autoref{th:gamma}, and \autoref{th:Euler}.
  
  \begin{enumerate}
    \item
    By replacing $R_t\be^3$ by $-R_t\be^3$ in \autoref{th:YsYt_param}, 
		we directly get the parameterization $\zb{\sigma}^*_{s,t}$ of 
		$\mathcal H_s\cap(-\mathcal H_t)$ in the form of \eqref{eq:dualCircle}.
    \item
    Proceeding as in \autoref{th:gamma}, we find the curve $\zb{\gamma}^*_{s,t}$ by
    \[ \zb{\gamma}^*_{s,t}(\beta) = P(R_s^\top\zb{\sigma}^*_{s,t}(\beta)) 
		=  
		a^*_{s,t}(\cos(\beta)-1)P(R_s^\top\bv_{s,t}^3)- a^*_{s,t}\sin(\beta)P(R_s^\top\bv_{s,t}^2). \]
    With
    \begin{equation}\label{eq:Pv3}
      P(R_s^\top\bv_{s,t}^3) = \frac{P(\be^3-R_s^\top R_t\be^3)}{\norm{R_s\be^3-R_t\be^3}} = -\frac{k_0}2\frac{P(R_s^\top R_t\be^3)}{a^*_{s,t}} = -\frac{\tilde a_{s,t}}{a^*_{s,t}}\bw_{s,t}^1,
    \end{equation}
    this yields the equation \eqref{eq:dualEllipse} for $\zb{\gamma}^*_{s,t}$.
    \item
    Taking finally the expressions of $\tilde a_{s,t}$, $\bw_{s,t}^1$, and $\bw_{s,t}^2$ in terms of the Euler angles of $R_s^\top R_t$ from the proof of \autoref{th:Euler}, we obtain with
    \[
		a^*_{s,t}
		\coloneqq
		\frac{k_0}2\norm{\be^3-R_s^\top R_t\be^3} 
		= 
		\frac{k_0}2\sqrt{2-2\cos(\theta)} = k_0\sin(\tfrac\theta2) 
		\]
    the identity in \eqref{eq:dualEllipseEuler}.
  \end{enumerate}
\end{proof}

\begin{proof}[of \autoref{th:commonAxis}]
(i) Since $R_s^\top R_t\be^3=\be^3$, the rotation $R_s^\top R_t$ has the rotation axis $\be^3$ and is therefore of the form $R_s^\top R_t=Q^{(3)}(\alpha)$ for some $\alpha\in\R/(2\pi\Z)$. Then, we see from the definition \eqref{eq:h} of $\bh$ that we have for all $\bk\in\mathcal B_{k_0}^2$
\begin{equation}\label{eq:commonAxisRel}
  R_s^\top R_t\bh(\bk) = Q^{(3)}(\alpha)\bh(\bk) = \bh(\mathrm{Q}(\alpha)\bk)
\end{equation}
and therefore, according to \eqref{eq:nu-F},
\[ \nu_s(\mathrm{Q}(\alpha)\bk) = \abs{\ktran[f](R_s\bh(\mathrm{Q}(\alpha)\bk))}^2 = \abs{\ktran[f](R_t\bh(\bk))}^2 = \nu_t(\bk). \]

(ii) Since $Q^{(2)}(\pi)R_s^\top R_t\be^3=-Q^{(2)}(\pi)\be^3=\be^3$, the rotation $Q^{(2)}(\pi)R_s^\top R_t$ has the rotation axis~$\be^3$ and we therefore have $R_s^\top R_t=Q^{(2)}(\pi)Q^{(3)}(\alpha)$ for some $\alpha\in\R/(2\pi\Z)$. Then, we see from the definition \eqref{eq:h} of $\bh$ that we have for all $\bk\in\mathcal B_{k_0}^2$
\begin{equation}\label{eq:commonAxisReflectionRel}
  R_s^\top R_t\bh(\bk) = Q^{(2)}(\pi)Q^{(3)}(\alpha)\bh(\bk) = Q^{(2)}(\pi)\bh(\mathrm{Q}(\alpha)\bk) = -\bh(\mathrm{SQ}(\alpha)\bk)
\end{equation}
and therefore, according to \eqref{eq:nu-F} and \eqref{eq:symmetryNu},
\begin{equation*} \nu_s(\mathrm{SQ}(\alpha)\bk) = \abs{\ktran[f](R_s\bh(S\mathrm{Q}(\alpha)\bk))}^2 = \abs{\ktran[f](-R_t\bh(\bk))}^2 = \abs{\ktran[f](R_t\bh(\bk))}^2 = \nu_t(\bk).  
\end{equation*}
\end{proof}

\begin{proof}[of \autoref{th:commonCircle}]
According to \autoref{th:commonAxis}, the fact that neither 
$\nu_s(\mathrm{Q}(\alpha)\bk)=\nu_t(\bk)$ nor $\nu_s(\mathrm{SQ}(\alpha)\bk)=\nu_t(\bk)$ 
holds for all $\bk\in\mathcal B_{k_0}^2$ for any choice of parameter $\alpha\in\R/(2\pi\Z)$ 
excludes the cases where $R_s\be^3=\pm R_t\be^3$ and we can find the elliptic arcs $\bgam_{s,t}$ and $\bgam_{t,s}$ 
and the dual arcs $\zb{\gamma}^*_{s,t}$ and $\zb{\gamma}^*_{t,s}$ as in \autoref{th:gamma} and \autoref{th:dualCircle}.

We parameterize the matrix $R_s^\top R_t$ in Euler angles 
$(\tilde\varphi,\tilde\theta,\tilde\psi)\in(\R/(2\pi\Z))\times[0,\pi]\times(\R/(2\pi\Z))$
as in \eqref{eq:euler}.
Then, the representation of the transposed matrix $R_t^\top R_s$ in Euler angles is given by
\begin{equation}\label{eq:RtRsEuler}
R_t^\top R_s = (R_s^\top R_t)^\top = Q^{(3)}(-\tilde\psi)Q^{(2)}(-\tilde\theta)Q^{(3)}(-\tilde\varphi) = Q^{(3)}(\pi-\tilde\psi)Q^{(2)}(\tilde\theta)Q^{(3)}(\pi-\tilde\varphi),
\end{equation}
where we used the identity $Q^{(2)}(-\tilde\theta)=Q^{(3)}(\pi)Q^{(2)}(\tilde\theta)Q^{(3)}(\pi)$ 
to shift the angles for $R_t^\top R_s$ into the chosen area of definition.
By \eqref{eq:gammaEuler}, we see that $\bgam^{\tilde\varphi,\tilde\theta}=\bgam_{s,t}$ 
and $\bgam^{\pi-\tilde\psi,\tilde\theta}=\bgam_{t,s}$,
and by \eqref{eq:dualEllipseEuler} that $\zb{\gamma}^{*,\tilde\varphi,\tilde\theta}=\zb{\gamma}^*_{s,t}$ 
and 
$\zb{\gamma}^{*,\pi-\tilde\psi,\tilde\theta}=\zb{\gamma}^*_{t,s}$.
Since the curves $\bgam_{s,t}$ and $\bgam_{t,s}$ as well as $\zb{\gamma}^*_{s,t}$ 
and $\zb{\gamma}^*_{t,s}$ fulfill 
by construction the equations \eqref{eq:commonCircleIdentity} and \eqref{eq:commonCircleIdentityDual}, 
and $\bgam^{\varphi,\theta}$, $\bgam^{\pi-\psi,\theta}$, $\bgam^{*,\varphi,\theta}$, and $\bgam^{*,\pi-\psi,\theta}$ are by assumption the only elliptic arcs of this form fulfilling \eqref{eq:nuGamma} and \eqref{eq:nuGammaDual}, 
we have that $\varphi=\tilde\varphi$, $\theta=\tilde\theta$, and $\psi=\tilde\psi$, which implies \eqref{eq:commonCircle}.
We note that the two pairs of curves can in general not be interchanged because in \eqref{eq:nuGamma} the curve $\bgam^{\varphi,\theta}(\cdot)$ is traversed conter-clockwise while $\bgam^{\pi-\psi,\theta}(-\cdot)$ is traversed clockwise,
whereas in \eqref{eq:nuGammaDual} both $\zb{\gamma}^{*,\varphi,\theta}$ and $\zb{\gamma}^{*,\pi-\psi,\theta}$ are traversed counter-clockwise.
\end{proof}

\section{Proof of \autoref{th:infRel}}
\label{sec:proof_1}
\begin{proof}
We define the continuously differentiable function $H\colon[0,T]\times\mathcal B_{k_0}^2\to\R^3$ by $H(t,\bk)\coloneqq R_t\bh(\bk)$ with $\bh$ being the parameterization from \eqref{eq:h}. 
By the definition~\eqref{eq:omega} of the angular velocity $\zb\omega_t$, we have $R_t' \bk = R_t (\zb\omega_t\times \bk)$.
Then 
for the partial derivative of $H$ with respect to the first argument $t$ at the point $\bk=r\zb\phi_t$ reads
\[ \partial_tH(t,r\zb\phi_t) 
=  R_t' \,\bh(r\zb\phi_t)
= R_t\big(\zb\omega_t\times\bh(r\zb\phi_t)\big) = R_t\begin{pmatrix}\omega_{t,2}h_3(r\zb\phi_t)-r{\omega_{t,3}}\phi_{t,2}\\-\omega_{t,1}h_3(r\zb\phi_t)+r{\omega_{t,3}}\phi_{t,1}\\r(\omega_{t,1}\phi_{t,2}-\omega_{t,2}\phi_{t,1})\end{pmatrix}. \]
Inserting the expression \eqref{eq:omega-decomp} of $\zb\omega_t$ in cylindrical coordinates and using that, according to \eqref{eq:h}, $h_3(r\zb\phi_t)=\sqrt{k_0^2-r^2}-k_0$, this becomes
\begin{equation} \label{eq:dtH}
\partial_tH(t,r\zb\phi_t) = \left(\rho_t\left(k_0-\sqrt{k_0^2-r^2}\right)+r\zeta_t\right)R_t\begin{pmatrix}-\phi_{t,2}\\\phi_{t,1}\\0\end{pmatrix}. \end{equation}
Denoting by $DH$ the Jacobi matrix of~$H$ with respect to $\bk$, we find with the chain rule
\begin{equation} \label{eq:dkH} DH(t,r\zb\phi_t)\begin{pmatrix}-\phi_{t,2}\\\phi_{t,1}\end{pmatrix} = R_t\begin{pmatrix}1&0\\0&1\\\frac{-r\phi_{t,1}}{\sqrt{k_0^2-r^2}}&\frac{-r\phi_{t,2}}{\sqrt{k_0^2-r^2}}\end{pmatrix}\begin{pmatrix}-\phi_{t,2}\\\phi_{t,1}\end{pmatrix} = R_t\begin{pmatrix}-\phi_{t,2}\\\phi_{t,1}\\0\end{pmatrix}. \end{equation}
Comparing \eqref{eq:dtH} and \eqref{eq:dkH}, we have that
\begin{equation} \label{eq:dH} \partial_tH(t,r\zb\phi_t)=\left(\rho_t\left(k_0-\sqrt{k_0^2-r^2}\right)+r\zeta_t\right)DH(t,r\zb\phi_t)\begin{pmatrix}-\phi_{t,2}\\\phi_{t,1}\end{pmatrix}. 
\end{equation}
Recalling the definition $\nu_t(\bk)=\abs{\ktran[f](H(t,\bk))}^2$,
we have again by the chain rule
\begin{align*}
\partial_t \nu_t(\bk)
&=
2 \operatorname{Re}\left( \ktran[f](H(t,\bk)) \right)
\inn{\nabla\ktran[f](H(t,\bk)), \partial_t H(t,\bk)}
\quad \text{ and}
\\
\inn{\nabla \nu_t(\bk), \begin{psmallmatrix} -\phi_{t,2}\\\phi_{t,1}\end{psmallmatrix}}
&=
2 \operatorname{Re}\left( \ktran[f](H(t,\bk)) \right)
\inn{\nabla\ktran[f](H(t,\bk)), D H(t,\bk) \begin{psmallmatrix} -\phi_{t,2}\\\phi_{t,1}\end{psmallmatrix}} ,
\end{align*}
where 
$\nabla \ktran[f](\by)$ denotes the gradient of $\ktran[f](\by)$ with respect to $\by\in\R^3$.
Inserting $\bk = r\bphi_t$ and using \eqref{eq:dH} yields the assertion.
\end{proof}

\section{Proofs of \autoref{sec:translations}} \label{sec:proof_2}
\begin{proof}[of \autoref{th:common-circle:translation}]
\begin{enumerate}
\item
Since the elliptic arcs $\bgam_{s,t}$ and $\bgam_{t,s}$ have by construction the symmetry \eqref{eq:commonCircleIdentity}, 
we obtain for all $\beta\in J_{s,t}$ that
\begin{align*}
\mu_s(\bgam_{s,t}(\beta)) &= \ktran [f]\left(R_s\bh(\bgam_{s,t}(\beta)) \right)\e^{-\i\inner{R_s\bd_s}{R_s\bh(\bgam_{s,t}(\beta))}} \\
&= \ktran{[f]}\left(R_t\bh(\bgam_{t,s}(-\beta)) \right)\e^{-\i\inner{R_s\bd_s}{R_t\bh(\bgam_{t,s}(-\beta))}} \\
&= \mu_t(\bgam_{t,s}(-\beta))\,\e^{\i\inner{R_t\bd_t-R_s\bd_s}{R_t\bh(\bgam_{t,s}(-\beta))}},
\end{align*}
which implies \eqref{eq:common-circle:translation} provided that $\mu_s(\bgam_{s,t}(\beta))$ (or, equivalently, $\mu_t(\bgam_{t,s}(-\beta))$) does not vanish.

\item
In the same way, the elliptic arcs $\zb{\gamma}^*_{s,t}$ and $\zb{\gamma}^*_{t,s}$ 
have the symmetry \eqref{eq:commonCircleIdentityDual}. With the symmetry property \eqref{eq:symmetryNu} of $\ktran[f]$, we get for all $\beta\in J^*_{s,t}$ that
\begin{align*}
\mu_s(\zb{\gamma}^*_{s,t}(\beta)) 
&= \ktran [f]\left(R_s\bh(\zb{\gamma}^*_{s,t}(\beta)) \right)
\e^{-\i\inner{R_s\bd_s}{R_s\bh(\zb{\gamma}^*_{s,t}(\beta))}} \\
&= \ktran{[f]}\left(-R_t\bh(\zb{\gamma}^*_{t,s}(\beta)) \right)\e^{\i\inner{R_s\bd_s}{R_t\bh(\zb{\gamma}^*_{t,s}(\beta))}} \\
&= \overline{\mu_t(\zb{\gamma}^*_{t,s}(\beta))}\,
\e^{\i\inner{R_s\bd_s-R_t\bd_t}{R_t\bh(\zb{\gamma}^*_{t,s}(\beta))}},
\end{align*}
which implies \eqref{eq:common-circle:translationDual} provided that $\mu_s(\zb{\gamma}^*_{s,t}(\beta))$ 
(or, equivalently, $\mu_t(\zb{\gamma}^*_{t,s}(\beta))$) does not vanish.
\end{enumerate}
\end{proof}

\begin{proof}[of \autoref{th:commonAxis:translation}]
\begin{enumerate}
\item
If $ R_t\be^3=R_s\be^3$, we get for all $\bk\in\mathcal B_{k_0}^2$, using that $R_s\bh(\bk)=R_t\bh(\mathrm{Q}(-\alpha)\bk)$ according to \eqref{eq:commonAxisRel},
\begin{align*}
\mu_t(\mathrm{Q}(-\alpha)\bk) &= \ktran[f](R_t\bh(\mathrm{Q}(-\alpha)\bk))\e^{-\i\inner{R_t\bd_t}{R_t\bh(\mathrm{Q}(-\alpha)\bk)}} \\
&= \ktran[f](R_s\bh(\bk))\e^{-\i\inner{R_t\bd_t}{R_s\bh(\bk)}} = \mu_s(\bk)\e^{-\i\inner{R_t\bd_t-R_s\bd_s}{R_s\bh(\bk)}}.
\end{align*}

\item
Similarly, we get for all $\bk\in\mathcal B_{k_0}^2$ in the case $ R_t\be^3=-R_s\be^3$ with the corresponding relation 
$-R_s\bh(\bk)=R_t\bh(\mathrm{Q}(-\alpha)\mathrm{S}\bk)$ from \eqref{eq:commonAxisReflectionRel} that
\begin{align*}
\mu_t(\mathrm{Q}(-\alpha)\mathrm{S}\bk) 
&= \ktran[f](R_t\bh(\mathrm{Q}(-\alpha)\mathrm{S}\bk))\e^{-\i\inner{R_t\bd_t}{R_t\bh(\mathrm{Q}(-\alpha)S\bk)}} \\
&= \ktran[f](-R_s\bh(\bk))\e^{\i\inner{R_t\bd_t}{R_s\bh(\bk)}} = \overline{\mu_s(\bk)}\e^{\i\inner{R_t\bd_t-R_s\bd_s}{R_s\bh(\bk)}}.
\end{align*}
\end{enumerate}
\end{proof}

\begin{proof}[of \autoref{th:reconTransl}]
\begin{enumerate}
\item
Since we know from our assumption of the scattering potential $f$ being real-valued that 
\[ \abs{\mu_s(\zb0)} = \abs{\ktran[f](\zb0)} = (2\pi)^{-\frac32}\int_{\R^3}f(\bx)\dd\bx > 0, \]
the equations \eqref{eq:common-circle:translation} and \eqref{eq:common-circle:translationDual} hold for all $\beta$ in some open interval $J$ around $0$. By taking the logarithm of these equations, we find with the circular arcs $\bsigma_{s,t}$ and $\zb{\sigma}^*_{s,t}$, defined in \eqref{eq:YsYt_param} and \eqref{eq:dualCircle}, that
\begin{align}
\inner{R_t\bd_t-R_s\bd_s}{\bsigma_{s,t}(\beta)} &= M(\beta)\quad\text{and} \label{eq:common-circle:translationLog}\\
\inner{R_t\bd_t-R_s\bd_s}{\zb{\sigma}^*_{s,t}(\beta)} &=  M^*(\beta) \label{eq:common-circleDual:translationLog}
\end{align}
for all $\beta\in J$, where the functions $M\colon J\to\C$ and $M^*\colon J\to\C$, given by
\begin{align*}
M(\beta) &\coloneqq -\i\int_0^\beta\frac{F'(\tilde\beta)}{F(\tilde\beta)}\dd\tilde\beta,\quad F(\beta)\coloneqq \frac{\mu_s(\bgam_{s,t}(\beta))}{\mu_t(\bgam_{t,s}(-\beta))},\quad\text{and} \\
 M^*(\beta) &\coloneqq -\i\int_0^\beta\frac{(F^*)'(\tilde\beta)}{F^*(\tilde\beta)}\dd\tilde\beta,
\quad
F^*(\beta)\coloneqq\frac{\mu_s(\zb{\gamma}^*_{s,t}(\beta))}{\overline{\mu_t(\zb{\gamma}^*_{t,s}(\beta))}},
\end{align*}
are explicitly known. Here, we used that the left-hand sides of \eqref{eq:common-circle:translationLog} and \eqref{eq:common-circleDual:translationLog} vanish for $\beta=0$ to choose the correct branch of the logarithm of the continuously differentiable and nowhere vanishing functions $F$ and $F^*$.

Inserting the expressions \eqref{eq:YsYt_param} and \eqref{eq:dualCircle} for the circular arcs $\bsigma_{s,t}$ and $\zb{\sigma}^*_{s,t}$, respectively, we find, using the notation from \autoref{th:YsYt_param} and \autoref{th:dualCircle}, that we have for all $\beta\in J$ the equation system
\begin{align*}
a_{s,t}(\cos(\beta)-1)\inner{R_s^\top R_t\bd_t-\bd_s}{R_s^\top\bv_{s,t}^1}+a_{s,t}\sin(\beta)\inner{R_s^\top R_t\bd_t-\bd_s}{R_s^\top\bv_{s,t}^2} &= M(\beta), \\
a^*_{s,t}(\cos(\beta)-1)\inner{R_s^\top R_t\bd_t-\bd_s}{R_s^\top\bv_{s,t}^3}+ a^*_{s,t}\sin(\beta)\inner{R_s^\top R_t\bd_t-\bd_s}{R_s^\top\bv_{s,t}^2} &=  M^*(\beta).
\end{align*}
Since the functions $\beta\mapsto\cos(\beta)-1$ and $\beta\mapsto\sin(\beta)$ are linearly independent on every interval with positive length, this implies that the coefficients
\[ \inner{R_s^\top R_t\bd_t-\bd_s}{R_s^\top\bv_{s,t}^j},\quad j\in\{1,2,3\}, \]
are uniquely determined by this (recalling that we explicitly know the parameters $a_{s,t}\neq0$ and $a^*_{s,t}\neq0$). 
Since $(R_s^\top\bv_{s,t}^j)_{j=1}^3$ is an orthonormal basis of $\R^3$ (which we also know explicitly), 
this uniquely determines the vector $R_s^\top R_t\bd_t-\bd_s\in\R^3$.

\item
Since $\mu_s(\zb0)\ne0$, we find an open disk $A\subset\mathcal B_{k_0}^2$ that contains $\zb0$ such that we have $\mu_s(\bk)\ne0$ for all $\bk\in A$. If $R_s^\top R_t\be^3=\be^3$, we have that $R_s^\top R_t=Q^{(3)}(\alpha)$ for some $\alpha\in\R/(2\pi\Z)$ and \eqref{eq:translDegnPlus} implies
\[ \inner{R_s^\top R_t\bd_t-\bd_s}{\bh(\bk)} 
= 
-\i\int_{C_{\zb0,\bk}}\frac{\nabla G(\zb{\tilde k})}{G(\zb{\tilde k})}\dd\zb{\tilde k}\quad\text{with}
\quad 
G(\bk)\coloneqq \frac{\mu_s(\bk)}{\mu_t(\mathrm{Q}(-\alpha)\bk)} 
\]
for all $\bk\in A$, where $C_{\zb0,\bk}$ denotes an arbitrary curve from $\zb0$ to $\bk$ in $A$. Since the vectors $\bh(\bk)$ cover for $\bk\in A$ an open subset of the hemisphere $\mathcal H_0$, they span all of $\R^3$, and thus this equation uniquely determines the vector $R_s^\top R_t\bd_t-\bd_s\in\R^3$.

Similarly, we have for $R_s^\top R_t\be^3=-\be^3$ that $R_s^\top R_t=Q^{(2)}(\pi)Q^{(3)}(\alpha)$ for some $\alpha\in\R/(2\pi\Z)$ and according to \eqref{eq:translDegnMinus}
\[ \inner{R_s^\top R_t\bd_t-\bd_s}{\bh(\bk)} = -\i\int_{C_{\zb0,\bk}}\frac{\nabla G^*(\zb{\tilde k})}{G^*(\zb{\tilde k})}\dd\zb{\tilde k}
\quad\text{with}\quad
G^*(\bk)\coloneqq \frac{\mu_s(\bk)}{\overline{\mu_t(\mathrm{Q}(-\alpha)S\bk)}} \]
for all $\bk\in A$, which again uniquely determines $R_s^\top R_t\bd_t-\bd_s$.
\end{enumerate}
\end{proof}


\section{Parameterization via Stereographic Projection} \label{se:stereo}

Based on the stereographic projection, 
we describe in this section a transformation that turns the elliptic arcs $\bgam$, see \eqref{eq:gamma}, into straight lines in $\R^2$.
Applying this transformation to the data $\nu_t$, see \eqref{eq:nu},
then one needs to detect common lines in the two-dimensional plane in order to reconstruct the rotation parameters.
There are existing algorithms for detecting common lines in the context of motion detection the ray transform, cf.\ \cite{Hee87}.
However, these lines all contain the origin, which is not the case for the diffraction tomography we consider here where we need an additional parameter to describe the lines.


\subsection{Direct common circle method}

We consider the stereographic projection $\bpi_t\colon \partial\mathcal B_{k_0}^3(-k_0R_t\be^3)\setminus\{\zb0\}\to\mathcal P_t$ of the hemisphere $\mathcal H_t\setminus\{\zb0\}$ of the sphere $\partial\mathcal B_{k_0}^3(-k_0R_t\be^3)$ from the origin onto the equatorial plane
\[ \mathcal P_t\coloneqq\{\bx\in\R^3:\langle\bx,R_t\be^3\rangle = -k_0\}. \]
This maps every circle $\partial\mathcal B_{k_0}^3(-k_0R_s\be^3)\cap \partial\mathcal B_{k_0}^3(-k_0R_t\be^3)$ (as it passes through the origin, which could be defined to be mapped to infinity) to a straight line in $\mathcal P_t$.

The stereographic projection $\bpi_t$ of a point $\bx \in \partial\mathcal B_{k_0}^3(-k_0R_t\be^3)\setminus \{\zb0\}$ is hereby defined as the intersection of the line through $\zb0$ and $\bx$ with the plane $\mathcal P_t$.
In particular, we have for $t=0$ where the rotation is $R_0=I$ that 
$\bpi_0\colon\partial\mathcal B_{k_0}^3(-k_0\be^3)\setminus\{\zb0\}\to\mathcal P_0$
\begin{equation} \label{eq:pi0_def}
 \bpi_0(\bx)=\left(-k_0\frac{x_1}{x_3},-k_0\frac{x_2}{x_3},-k_0\right)^\top=-k_0\frac{\bx}{x_3}.
\end{equation}
The stereographic projection $\bpi_t$ for general $t\in[0,T]$ is then obtained 
by rotating a point $\bx\in \partial\mathcal B_{k_0}^3(-k_0R_t\be^3)$ 
first to $\partial\mathcal B_{k_0}^3(-k_0\be^3)$ and rotating the projected point in $\mathcal P_0$ back to 
$\mathcal P_t$, i.e.,
\[ \bpi_t(\bx) \coloneqq R_t\bpi_0(R_t^\top\bx). \]
The following lemma shows that we can write all the projections $\bpi_t$ as restrictions of the function
$\bpi\colon\R^3\setminus\{\zb0\}\to\R^3\setminus\{\zb0\}$ defined by
\begin{equation} \label{eq:pi}
 \bpi(\bx)\coloneqq2k_0^2\frac{\bx}{\norm{\bx}^2},
 \end{equation}
whose inverse is given by $\bpi^{-1}=\bpi$.

\begin{lemma}\label{th:stereo}
  For every $t\in[0,T]$, we have
  \[ \bpi_t(\bx) = \bpi(\bx)\quad\text{for all}\quad \bx \in \partial\mathcal B_{k_0}^3(-k_0R_t\be^3)\setminus\{\zb0\}. \]
\end{lemma}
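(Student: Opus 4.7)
The plan is to first verify the identity for $t=0$ by reducing the definition of $\bpi_0$ to the expression $\bpi$ using the sphere equation, and then to extend to general $t$ by exploiting that $\bpi$ is rotation-equivariant (since $R_t$ is orthogonal and $\bpi$ depends on $\bx$ only through $\bx/\|\bx\|^2$).

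The first step is the case $t=0$. Fix $\bx \in \partial\mathcal B_{k_0}^3(-k_0\be^3)\setminus\{\zb0\}$, so that $\|\bx+k_0\be^3\|^2 = k_0^2$. Expanding this yields $\|\bx\|^2 = -2k_0 x_3$ (in particular $x_3 < 0$, which is consistent with $\bx \ne \zb0$ lying on the sphere through the origin). Substituting into $\bpi(\bx) = 2k_0^2\bx/\|\bx\|^2$ gives $\bpi(\bx) = -k_0\bx/x_3 = \bpi_0(\bx)$, matching the definition \eqref{eq:pi0_def}.

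Next, for general $t\in[0,T]$, take $\bx \in \partial\mathcal B_{k_0}^3(-k_0 R_t\be^3)\setminus\{\zb0\}$. Since $R_t\in\SO$ is orthogonal, we have $\|R_t^\top\bx + k_0\be^3\| = \|R_t^\top(\bx + k_0 R_t\be^3)\| = \|\bx + k_0 R_t\be^3\| = k_0$, so $R_t^\top\bx \in \partial\mathcal B_{k_0}^3(-k_0\be^3)\setminus\{\zb0\}$ and the previous step applies:
\begin{equation*}
\bpi_0(R_t^\top\bx) = \bpi(R_t^\top\bx) = 2k_0^2\,\frac{R_t^\top\bx}{\|R_t^\top\bx\|^2} = 2k_0^2\,\frac{R_t^\top\bx}{\|\bx\|^2}.
\end{equation*}
Applying $R_t$ from the left and using the definition $\bpi_t(\bx) = R_t\bpi_0(R_t^\top\bx)$, the rotation $R_t$ cancels with $R_t^\top$ inside, and we obtain $\bpi_t(\bx) = 2k_0^2\bx/\|\bx\|^2 = \bpi(\bx)$, as claimed.

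There is no real obstacle here: both steps are direct computations, and the only non-trivial observation is the identity $\|\bx\|^2 = -2k_0x_3$ on the shifted sphere, which makes the two formulas for the stereographic projection coincide. The extension to arbitrary $t$ is immediate from the rotational symmetry of the geometric construction and from the fact that $\bpi$ is equivariant under the action of $\SO$.
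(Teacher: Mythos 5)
Your proof is correct and follows essentially the same route as the paper: both rest on the sphere identity $\norm{\bx}^2 = -2k_0\langle R_t^\top\bx,\be^3\rangle$ and the cancellation of $R_t$ with $R_t^\top$. Splitting the argument into the $t=0$ case plus rotation-equivariance of $\bpi$ is only a cosmetic reorganization of the paper's single computation.
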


\begin{proof}
  Let $\bx\in\partial\mathcal B_{k_0}^3(-k_0R_t\be^3)\setminus\{\zb0\}$. Then we obtain
  \[ 0 = \norm{\bx+k_0R_t\be^3}^2-k_0^2 = \norm{\bx}^2+2k_0\langle R_t^\top\bx,\be^3\rangle \]
  and therefore
  \[ \bpi_t(\bx) = R_t\bpi_0(R_t^\top\bx) = -k_0R_t\frac{R_t^\top\bx}{\langle R_t^\top\bx,\be^3\rangle} = 2k_0^2\frac{\bx}{\norm{\bx}^2} = \bpi(\bx). \]
\end{proof}

Next, we consider for arbitrary $t\in[0,T]$ the function
$\tau\colon\mathcal B_{k_0}^2\setminus\{\zb0\}\to\R^2\setminus\overline{\mathcal B_{k_0}^2}$ defined by
\begin{equation}\label{eq:tau}
 \tau(\bk)\coloneqq P(R_t^\top\bpi_t(R_t\bh(\bk))) = P(\bpi_0(\bh(\bk))) = \frac{k_0}{k_0-\kappa(\bk)}\bk,
 \end{equation}
which describes the change from the parameterization via $R_t\bh$ to the one via stereographic projection and is conveniently independent of the choice of $t\in[0,T]$. 
It maps by definition the data point $\bk$ by the parameterization $R_t\bh$ onto the hemisphere $\mathcal H_t$, 
stereographically projects it to $\mathcal P_t$ (with image $\mathcal P_t\setminus\overline{\mathcal B_{k_0}^3(-k_0R_t\be^3)}$), 
and extracts the two components in the plane by rotating it to $\mathcal P_0$ and orthogonally projecting it with $P$ 
to the first two components. Therefore it maps every elliptic arc $\bgam_{t,s}$ to a straight line. The codomain of $\tau$ is  chosen so that $\tau$ is bijective, and its inverse is given by
\[ \tau^{-1}(\by) = \frac{2k_0^2}{k_0^2+\norm\by^2}\by,\quad\by\in\R^2\setminus\overline{\mathcal B_{k_0}^2}. \]

\begin{lemma}\label{th:tau_gamma}
  Let $s,t\in[0,T]$ such that $R_s\be^3\neq \pm R_t\be^3$ and $(\varphi,\theta,\psi)\in(\R/(2\pi\Z))\times[0,\pi]\times(\R/(2\pi\Z))$ be the Euler angles of the rotation $R_s^\top R_t$ as in \eqref{eq:euler}.
  \begin{enumerate}
    \item
    The elliptic arc $\bgam_{s,t}$ defined in \eqref{eq:gamma} fulfills
    \begin{equation}\label{eq:tau_gamma}
      \tau(\bgam_{s,t}(\beta)) = -k_0\tan(\tfrac\theta2)\begin{pmatrix}\cos(\varphi)\\\sin(\varphi)\end{pmatrix}+\frac{k_0}{\cos(\frac\theta2)}\cot(\tfrac\beta2)\begin{pmatrix}-\sin(\varphi)\\\cos(\varphi)\end{pmatrix},\quad\beta\in J_{s,t}\setminus\{0\}.
    \end{equation}
    \item
    The dual elliptic arc $\zb{\gamma}^*_{s,t}$ given by \eqref{eq:dualEllipse} fulfills
    \begin{equation}\label{eq:tau_gamma_dual}
      \tau(\zb{\gamma}^*_{s,t}(\beta)) 
			= 
			k_0\cot(\tfrac\theta2)\begin{pmatrix}\cos(\varphi)\\\sin(\varphi)\end{pmatrix} -\frac{k_0}{\sin(\frac\theta2)}\cot(\tfrac\beta2)
			\begin{pmatrix}-\sin(\varphi)\\
			\cos(\varphi)\end{pmatrix},\quad\beta\in J^*_{s,t}\setminus\{0\}.
    \end{equation}
  \end{enumerate}
\end{lemma}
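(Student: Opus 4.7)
The proof is in essence a direct calculation substituting the parametrization of $\bgam_{s,t}$ from \autoref{th:Euler} into the definition $\tau(\bk)=\tfrac{k_0}{k_0-\kappa(\bk)}\bk$, with $\kappa(\bk)=\sqrt{k_0^2-\norm{\bk}^2}$. The key is to convert everything to half-angle form so that both $\norm{\bgam^{\varphi,\theta}(\beta)}^2$ and the scalar prefactor $\tfrac{k_0}{k_0-\kappa(\cdot)}$ factor nicely.

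For (i), I would first rewrite \eqref{eq:gammaEuler} using $\sin\theta=2\sin(\tfrac\theta2)\cos(\tfrac\theta2)$, $\cos\beta-1=-2\sin^2(\tfrac\beta2)$, and $\sin\beta=2\sin(\tfrac\beta2)\cos(\tfrac\beta2)$ to obtain the compact form
\begin{equation*}
  \bgam^{\varphi,\theta}(\beta)
  =2k_0\cos(\tfrac\theta2)\sin(\tfrac\beta2)\Bigl[-\sin(\tfrac\theta2)\sin(\tfrac\beta2)\begin{pmatrix}\cos\varphi\\\sin\varphi\end{pmatrix}+\cos(\tfrac\beta2)\begin{pmatrix}-\sin\varphi\\\cos\varphi\end{pmatrix}\Bigr].
\end{equation*}
Then, setting $w\coloneqq\cos(\tfrac\theta2)\sin(\tfrac\beta2)$, a short calculation using $\sin^2(\tfrac\theta2)\sin^2(\tfrac\beta2)+\cos^2(\tfrac\beta2)=1-w^2$ gives $\norm{\bgam^{\varphi,\theta}(\beta)}^2=4k_0^2w^2(1-w^2)$, whence the crucial perfect square
\begin{equation*}
  k_0^2-\norm{\bgam^{\varphi,\theta}(\beta)}^2=k_0^2(1-2w^2)^2.
\end{equation*}

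Next I would verify that on the interval $J_{s,t}$ we have $1-2w^2>0$, so that $\kappa(\bgam^{\varphi,\theta}(\beta))=k_0(1-2w^2)$ and consequently
\begin{equation*}
  k_0-\kappa(\bgam^{\varphi,\theta}(\beta))=2k_0\cos^2(\tfrac\theta2)\sin^2(\tfrac\beta2).
\end{equation*}
To check the sign, note that $\langle R_s\be^3,R_t\be^3\rangle=\cos\theta$, so the condition $\cos\beta>\tfrac{\cos\theta-1}{\cos\theta+1}=-\tan^2(\tfrac\theta2)$ from \eqref{eq:YsYt_param:angle} rearranges via $\cos\beta=1-2\sin^2(\tfrac\beta2)$ to exactly $2w^2<1$ (this is automatic in the case $\cos\theta\le0$ except at the isolated boundary point $\beta=\pm\pi$, where $\bgam$ would hit $\partial\mathcal B^2_{k_0}$ and is excluded). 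Dividing the simplified $\bgam^{\varphi,\theta}(\beta)$ by $2\cos^2(\tfrac\theta2)\sin^2(\tfrac\beta2)$ then yields
\begin{equation*}
  \tau(\bgam^{\varphi,\theta}(\beta))=-k_0\tan(\tfrac\theta2)\begin{pmatrix}\cos\varphi\\\sin\varphi\end{pmatrix}+\frac{k_0}{\cos(\tfrac\theta2)}\cot(\tfrac\beta2)\begin{pmatrix}-\sin\varphi\\\cos\varphi\end{pmatrix},
\end{equation*}
which is \eqref{eq:tau_gamma}.

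For (ii), the calculation is structurally identical with $\theta/2$ replaced by its complement: writing \eqref{eq:dualEllipseEuler} in half-angle form yields $\zb{\gamma}^{*,\varphi,\theta}(\beta)=2k_0\sin(\tfrac\theta2)\sin(\tfrac\beta2)[\cos(\tfrac\theta2)\sin(\tfrac\beta2)\be_1'-\cos(\tfrac\beta2)\be_2']$ with $\be_1',\be_2'$ the same orthonormal frame. With $w^*\coloneqq\sin(\tfrac\theta2)\sin(\tfrac\beta2)$ one gets the analogous perfect square $k_0^2-\norm{\zb{\gamma}^{*,\varphi,\theta}(\beta)}^2=k_0^2(1-2(w^*)^2)^2$; the inequality $2(w^*)^2<1$ on $J^*_{s,t}$ follows from \eqref{eq:dualCircleAngle} by the same half-angle manipulation. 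Dividing by $k_0-\kappa=2k_0\sin^2(\tfrac\theta2)\sin^2(\tfrac\beta2)$ produces \eqref{eq:tau_gamma_dual}. The only delicate point in the whole argument is the sign choice of the square root defining $\kappa$, but this is fixed unambiguously by the description of the intervals $J_{s,t}$ and $J^*_{s,t}$; the remaining work is purely trigonometric bookkeeping.
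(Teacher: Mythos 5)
Your proof is correct, but it takes a different route from the paper's. You work entirely in the two--dimensional data plane, substituting the Euler--angle form $\bgam^{\varphi,\theta}$ from \autoref{th:Euler} into the explicit formula $\tau(\bk)=\tfrac{k_0}{k_0-\kappa(\bk)}\bk$ of \eqref{eq:tau}; the computation then hinges on the half-angle factorization, the perfect square $k_0^2-\norm{\bgam^{\varphi,\theta}(\beta)}^2=k_0^2(1-2w^2)^2$ with $w=\cos(\tfrac\theta2)\sin(\tfrac\beta2)$, and a sign check of $1-2w^2$ against the definition of $J_{s,t}$ (all of which I verified, including the dual case with $w^*=\sin(\tfrac\theta2)\sin(\tfrac\beta2)$). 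The paper instead stays in three dimensions: it writes $\tau\circ\bgam_{s,t}=P\circ\bpi_0\circ R_s^\top\circ\bsigma_{s,t}$ using \eqref{eq:sigma_gamma}, so that the denominator is simply the third component $\tfrac{k_0}{2}(\cos\beta-1)(1+\langle R_s\be^3,R_t\be^3\rangle)$ of $R_s^\top\bsigma_{s,t}(\beta)$, and then reuses the projections $P(R_s^\top\bv_{s,t}^j)$ already computed in \eqref{eq:Pv} and \eqref{eq:Pv3} together with $\tfrac{\sin\beta}{1-\cos\beta}=\cot(\tfrac\beta2)$. The paper's route buys freedom from any square root: since $\bpi_0$ is rational in the ambient coordinates, no branch of $\kappa$ has to be chosen and no inequality on $J_{s,t}$ is needed. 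Your route buys self-containedness in the plane (no appeal to the circular arcs $\bsigma_{s,t}$, $\bsigma^*_{s,t}$), at the price of the sign verification $2w^2<1$ --- which you handle correctly, and you even spot the boundary degeneracy at $\theta=\tfrac\pi2$, $\beta=\pi$ where $\bgam$ touches $\partial\mathcal B^2_{k_0}$, a point the paper's proof silently sidesteps.
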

\goodbreak
\begin{proof}
  \begin{enumerate}
    \item
    We use \eqref{eq:sigma_gamma} to write
    \[ \tau(\bgam_{s,t}(\beta)) = P(\bpi_0(\bh(\bgam_{s,t}(\beta)))) = P(\bpi_0(R_s^\top\bsigma_{s,t}(\beta))). \]
    Plugging in the expression \eqref{eq:YsYt_param} for the circular arc $\bsigma_{s,t}$ and the definition \eqref{eq:pi0_def} of the function $\bpi_0$, we arrive at
    \[ \tau(\bgam_{s,t}(\beta)) = k_0\frac{ a_{s,t}(\cos(\beta)-1)P(R_s^\top\bv_{s,t}^1)+ a_{s,t}\sin(\beta)P(R_s^\top\bv_{s,t}^2)}{\frac{k_0}2(1-\cos(\beta))(1+\langle R_s\be^3,R_t\be^3\rangle)}. \]
    As in \autoref{th:gamma}, where we already calculated the projections of the basis vectors $\bv_{s,t}^1$ and~$\bv_{s,t}^2$ in \eqref{eq:Pv}, we can rewrite this  in the form
    \[ \tau(\bgam_{s,t}(\beta)) = 2\frac{\tilde a_{s,t}(\cos(\beta)-1)\bw_{s,t}^1+ a_{s,t}\sin(\beta)\bw_{s,t}^2}{(1-\cos(\beta))(1+\langle R_s\be^3,R_t\be^3\rangle)}. \]
    Using $1+\langle R_s\be^3,R_t\be^3\rangle = \frac12\norm{R_s\be^3+R_t\be^3}^2 = \frac2{k_0^2} a_{s,t}^2$
    and the trigonometric identity $\frac{\sin(\beta)}{1-\cos(\beta)}=\cot(\frac\beta2)$,
    this becomes 
    \[ \tau(\bgam_{s,t}(\beta)) = -\frac{k_0^2\tilde a_{s,t}}{ a_{s,t}^2}\bw_{s,t}^1+\frac{k_0^2}{ a_{s,t}}\cot(\tfrac\beta2)\bw_{s,t}^2. \]
    Inserting the expressions for the parameters in Euler angles as in \autoref{th:Euler}, we obtain \eqref{eq:tau_gamma}.
    \item
    In the same way, we find with the results and the notation of \autoref{th:dualCircle} that
    \[ \tau(\zb{\gamma}^*_{s,t}(\beta)) 
		= 
		P(\bpi_0(R_s^\top\zb{\sigma}^*_{s,t}(\beta))) 
		= -k_0\frac{a^*_{s,t}(\cos(\beta)-1)P(R_s^\top\bv_{s,t}^3)- a^*_{s,t}\sin(\beta)P(R_s^\top\bv_{s,t}^2)}{\frac{k_0}2(\cos(\beta)-1)(1-\langle R_s\be^3,R_t\be^3\rangle)}. \]
    Using \eqref{eq:Pv} and \eqref{eq:Pv3} to express the projections of the basis vectors $\bv_{s,t}^2$ and $\bv_{s,t}^3$, we get with $1-\langle R_s\be^3,R_t\be^3\rangle = \frac12\norm{R_s\be^3-R_t\be^3}^2 = \frac2{k_0^2} (a^*_{s,t})^2$ that
    \[ \tau(\zb{\gamma}^*_{s,t}(\beta)) 
		= 
		k_0^2\frac{\tilde a_{s,t}}{(a^*_{s,t})^2}\bw_{s,t}^1 -\frac{k_0^2}{a^*_{s,t}}\cot(\tfrac\beta2)\bw_{s,t}^2. \]
    Inserting the expressions for the parameters in Euler angles as in \autoref{th:Euler} and \autoref{th:dualCircle}, this becomes \eqref{eq:tau_gamma_dual}.
  \end{enumerate}
\end{proof}

From the definitions of the intervals $J_{s,t}$ and $J^*_{s,t}$,
the functions $\tau\circ\bgam_{s,t}\colon J_{s,t}\setminus\{0\}\to\R^2\setminus\overline{\mathcal B_{k_0}^2}$ from \eqref{eq:tau_gamma} 
and 
$\tau\circ\zb{\gamma}^*_{s,t}\colon J^*_{s,t}\setminus\{0\}\to\R^2\setminus\overline{\mathcal B_{k_0}^2}$ from \eqref{eq:tau_gamma_dual} 
 parameterize the parts of straight lines in $\R^2$ which are outside the ball $\overline{\mathcal B_{k_0}^2}$, see \autoref{fig:stereographic}.

\begin{figure}[htb]
  \centering\includegraphics[width=0.9\textwidth]{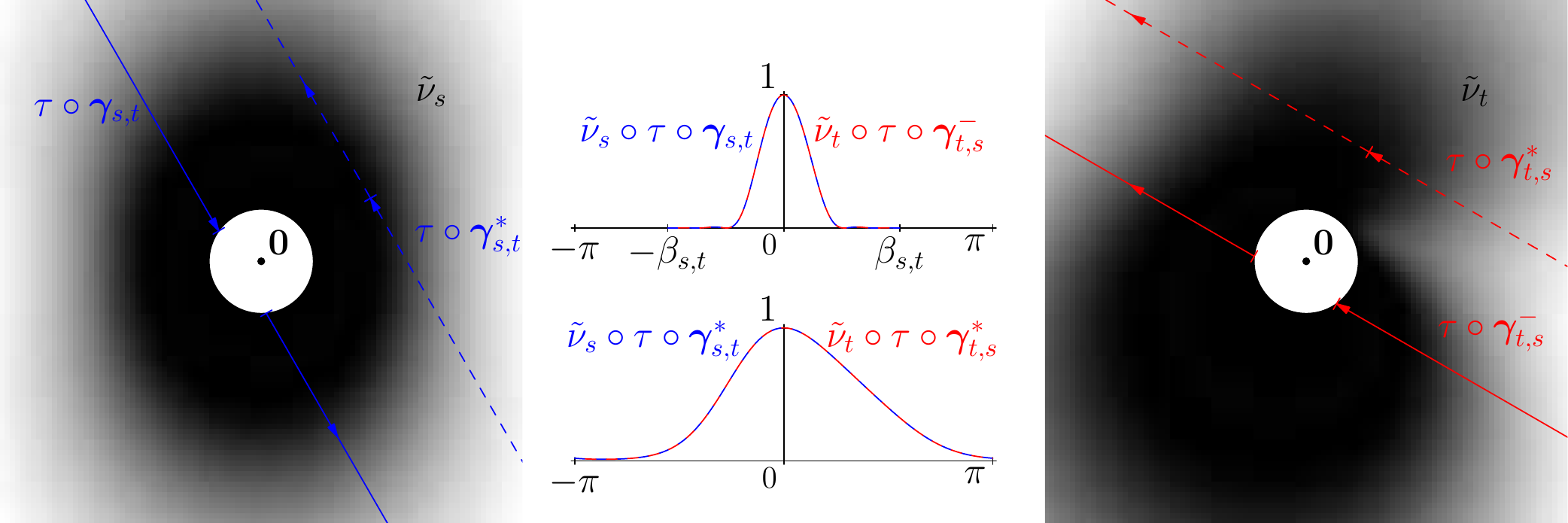}
  \caption{Transformed scaled squared energies $\tilde\nu_s$ and $\tilde\nu_t$, see \eqref{eq:hatNu}, for the same situation as in \autoref{fig:ellipticArcs} and the paths of the corresponding two straight lines $\tau\circ\bgam_{s,t}$ and $\tau\circ\bgam_{t,s}^-$ (the minus again indicating the reversed direction) and their dual straight lines $\tau\circ\zb{\gamma}^*_{s,t}$ and $\tau\circ\zb{\gamma}^*_{t,s}$. The values of $\tilde\nu_s$ and $\tilde\nu_t$ along the lines are plotted in the center of the figure.}\label{fig:stereographic}
\end{figure}

Thus, looking for straight lines in the \emph{transformed scaled squared energy} $\tilde\nu_t\colon\R^2\setminus\overline{\mathcal B_{k_0}^2}\to[0,\infty)$, defined by
\begin{equation}\label{eq:hatNu}
  \tilde\nu_t\coloneqq\nu_t\circ\tau^{-1}\quad\text{for all}\quad t\in[0,T],
\end{equation}
we can recover the Euler angles as in \autoref{th:commonCircle}. This is summarized in the following theorem.

\begin{theorem}\label{th:reconStereo}
  Let $s,t\in[0,T]$  such that $R_s\be^3\ne\pm R_t\be^3$ and assume that there uniquely exist two pairs $(\zb{\Gamma}_\ell)_{\ell=1}^2$ and $(\zb{\Gamma}^*_\ell)_{\ell=1}^2$ of straight lines of the form
  \begin{align}
    \zb{\Gamma}_\ell\colon\R\to\R^2,\quad&\zb{\Gamma}_\ell(\xi)\coloneqq -b\bw_\ell^1+\xi\bw_\ell^2,\quad\text{and} \label{eq:Gamma} \\
    \zb{\Gamma}^*_\ell\colon\R\to\R^2,\quad&\zb{\Gamma^*}_\ell(\xi)\coloneqq \frac{k_0^2}b\bw_\ell^1-\xi\bw_\ell^2,\quad\ell\in\{1,2\}, \label{eq:GammaDual}
  \end{align}
  for some parameter $b\in(0,\infty)$ and two positively oriented, orthonormal bases $(\bw_1^j)_{j=1}^2$ and $(\bw_2^j)_{j=1}^2$ of $\R^2$ such that we have for the transformed scaled squared energy $\tilde\nu_t$ that
  \begin{alignat*}{2}
    &\tilde\nu_s(\zb{\Gamma}_1(\xi)) = \tilde\nu_t(\zb{\Gamma}_2(-\xi))\quad&&\text{for all}\quad\xi\in\R\quad\text{with}\quad\xi^2>k_0^2-b^2, \\
    &\tilde\nu_s(\zb{\Gamma}^*_1(\xi)) = \tilde\nu_t(\zb{\Gamma}^*_2(\xi))\quad&&\text{for all}
		\quad\xi\in\R\quad
		\text{with}\quad\xi^2>k_0^2-k_0^4b^{-2}.
  \end{alignat*}
  Then the relative rotation is given by
  \begin{equation}\label{eq:reconStereoEuler}
    R_s^\top R_t = Q^{(3)}(\arg(\bw_1^1))Q^{(2)}(2\arctan(\tfrac b{k_0}))Q^{(3)}(\pi-\arg(\bw_2^1)).
  \end{equation}
\end{theorem}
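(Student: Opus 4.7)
The plan is to reduce the theorem to the direct common circle \autoref{th:commonCircle} via the transformation $\tau$ from \eqref{eq:tau}. Because $\tau$ sends every elliptic arc $\bgam^{\varphi,\theta}$ and its dual $\bgam^{*,\varphi,\theta}$ to a straight line by \autoref{th:tau_gamma}, and $\tilde\nu_t = \nu_t\circ\tau^{-1}$ by \eqref{eq:hatNu}, the hypotheses on the transformed data will translate directly into the hypotheses of \autoref{th:commonCircle} after identifying the line parameters with Euler angles.

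First I would set up the parameter dictionary. Define $\varphi\coloneqq\arg(\bw_1^1)$, $\theta\coloneqq 2\arctan(b/k_0)\in(0,\pi)$, and $\psi\coloneqq\pi-\arg(\bw_2^1)$, so that $b=k_0\tan(\theta/2)$ and hence $k_0^2/b=k_0\cot(\theta/2)$. \autoref{th:tau_gamma}(i) then shows that $\tau\circ\bgam^{\varphi,\theta}$ traces out the straight line with offset $-b\bw_1^1$ and direction $\bw_1^2$ (the positive orientation of $(\bw_1^1,\bw_1^2)$ forces $\bw_1^2=(-\sin\varphi,\cos\varphi)^\top$), so it agrees as a set with $\zb{\Gamma}_1$; analogously $\tau\circ\bgam^{\pi-\psi,\theta}$ agrees with $\zb{\Gamma}_2$. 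By \autoref{th:tau_gamma}(ii), the dual arcs $\bgam^{*,\varphi,\theta}$ and $\bgam^{*,\pi-\psi,\theta}$ are sent to $\zb{\Gamma}^*_1$ and $\zb{\Gamma}^*_2$ respectively; crucially, the offset scalar for the dual lines is forced to equal $k_0^2/b$ by the \emph{same} $b$, so the dual line pair introduces no additional parameters. Moreover, the reparameterization $\xi(\beta)=(k_0/\cos(\theta/2))\cot(\beta/2)$ is a diffeomorphism from each half of $[-\tfrac\pi2,\tfrac\pi2]\setminus\{0\}$ onto one of the half-rays $\{\xi\in\R:\xi^2>k_0^2-b^2\}$ appearing in the hypothesis.

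Applying $\tau^{-1}$, the assumed equality $\tilde\nu_s(\zb{\Gamma}_1(\xi))=\tilde\nu_t(\zb{\Gamma}_2(-\xi))$ becomes $\nu_s(\bgam^{\varphi,\theta}(\beta))=\nu_t(\bgam^{\pi-\psi,\theta}(-\beta))$ for all $\beta\in[-\tfrac\pi2,\tfrac\pi2]$, which is precisely \eqref{eq:nuGamma}; the dual condition translates analogously to \eqref{eq:nuGammaDual}. Since the dictionary $(b,\bw_1^1,\bw_2^1)\leftrightarrow(\varphi,\theta,\psi)$ is a bijection between $(0,\infty)\times\S^1\times\S^1$ and $(\R/(2\pi\Z))\times(0,\pi)\times(\R/(2\pi\Z))$, uniqueness of the line pairs transfers to uniqueness of the Euler angles. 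Because $b>0$ forces $\theta\in(0,\pi)$, we automatically have $R_s\be^3\ne\pm R_t\be^3$, ruling out the degenerate cases of \autoref{th:commonAxis}, so \autoref{th:commonCircle} applies and yields $R_s^\top R_t=Q^{(3)}(\varphi)Q^{(2)}(\theta)Q^{(3)}(\psi)$, which is \eqref{eq:reconStereoEuler}. The main technical care will be in matching traversal directions: the ``$-\xi$'' in the hypothesis corresponds to ``$-\beta$'' in \eqref{eq:nuGamma} because $\cot(\beta/2)$ is odd, whereas no sign flip is needed in the dual case (matching \eqref{eq:nuGammaDual}). Once this bookkeeping is done, the remaining verification reduces to routine trigonometric identities.
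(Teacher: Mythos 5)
Your geometric translation is exactly the paper's: you rely on \autoref{th:tau_gamma}, the dictionary $b=k_0\tan(\tfrac\theta2)$, $\bw_1^1=(\cos\varphi,\sin\varphi)^\top$, $\bw_2^1=(\cos(\pi-\psi),\sin(\pi-\psi))^\top$, the reparameterization $\xi=\tfrac{k_0}{\cos(\theta/2)}\cot(\tfrac\beta2)$ (resp.\ $\tfrac{k_0}{\sin(\theta/2)}\cot(\tfrac\beta2)$ for the dual lines), and your bookkeeping of offsets, orientations and traversal directions is correct. The gap is in the last step, where you invoke \autoref{th:commonCircle} as a black box. Its hypothesis demands that $(\varphi,\theta,\psi)$ be the \emph{unique} angles for which \eqref{eq:nuGamma} and \eqref{eq:nuGammaDual} hold on $[-\tfrac\pi2,\tfrac\pi2]$. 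What your assumption yields after pulling back through $\tau$ is uniqueness of the angles for which the arc identities hold on the full intervals $J_{s,t}$ and $J^*_{s,t}$: the set $\{\xi\in\R:\xi^2>k_0^2-b^2\}$ corresponds to all of $J_{s,t}\setminus\{0\}$, whereas $[-\tfrac\pi2,\tfrac\pi2]$ corresponds only to $\abs{\xi}\ge k_0/\cos(\tfrac\theta2)$. Matching on a larger interval is a stronger requirement, hence has a smaller solution set; so uniqueness of the line pairs does not exclude a second angle triple satisfying \eqref{eq:nuGamma}--\eqref{eq:nuGammaDual} on $[-\tfrac\pi2,\tfrac\pi2]$ but failing outside it, in which case the uniqueness hypothesis of \autoref{th:commonCircle} is simply false and that theorem cannot be applied. (A smaller instance of the same mismatch: \autoref{th:commonCircle} also assumes that \eqref{eq:commonAxis} and \eqref{eq:commonAxisReflection} fail for every $\alpha$, which is a condition on the data $\nu_s,\nu_t$ and is not implied by $R_s\be^3\ne\pm R_t\be^3$ alone.)

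The repair --- and this is how the paper argues --- is to bypass \autoref{th:commonCircle} and use the uniqueness in the other direction. Parameterize $R_s^\top R_t$ by its \emph{true} Euler angles $(\tilde\varphi,\tilde\theta,\tilde\psi)$; by \autoref{th:tau_gamma} and the reparameterizations above, the true arcs $\bgam_{s,t},\bgam_{t,s},\bgam^*_{s,t},\bgam^*_{t,s}$ become line pairs of exactly the form \eqref{eq:Gamma}--\eqref{eq:GammaDual}, and by \eqref{eq:commonCircleIdentity} and \eqref{eq:commonCircleIdentityDual} they satisfy the matching identities on the \emph{full} domains required in the hypothesis. The assumed uniqueness of $(\zb\Gamma_\ell)$ and $(\zb\Gamma^*_\ell)$ then forces the given lines to coincide with these true ones, and \eqref{eq:reconStereoEuler} is read off from precisely the dictionary you set up. So your parameter identifications are the right ones; only the logical direction in which the uniqueness is exploited needs to change.
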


\begin{proof}
  We parameterize $R_s^\top R_t$ in Euler angles $(\varphi,\theta,\psi)\in(\R/(2\pi\Z))\times[0,\pi]\times(\R/(2\pi\Z))$ as in \eqref{eq:euler} and get for the representation of $R_t^\top R_s$ in Euler angles the formula \eqref{eq:RtRsEuler}. Then, we consider the straight lines $\tau\circ\bgam_{s,t}$ and $\tau\circ\zb{\gamma}^*_{s,t}$, given by \eqref{eq:tau_gamma} and \eqref{eq:tau_gamma_dual}, where $\bgam_{s,t}$ and ${\zb\gamma}^*_{s,t}$ denote the elliptic arcs introduced in \eqref{eq:gamma} and \eqref{eq:dualEllipse}, and reparameterize them via the functions
  \begin{alignat*}{2}
    &\Xi\colon J_{s,t}\setminus\{0\}\to\{\xi\in\R:\xi^2>k_0^2-k_0^2\tan^2(\tfrac\theta2)\},\quad&&\Xi(\beta)\coloneqq\frac{k_0}{\cos(\frac\theta2)}\cot(\tfrac\beta2),\quad\text{and}\\
    &\Xi^*\colon J^*_{s,t}\setminus\{0\}\to\{\xi\in\R:\xi^2>k_0^2-k_0^2\cot^2(\tfrac\theta2)\},
		\quad&&\Xi^*(\beta)\coloneqq\frac{k_0}{\sin(\frac\theta2)}\cot(\tfrac\beta2),
  \end{alignat*}
  which are seen to be bijective by using that $\beta\in J_{s,t}$ is by definition \eqref{eq:YsYt_param:angle} for $\beta\in(-\pi,\pi]$ equivalent to $\cos(\beta)>\frac{\cos(\theta)-1}{\cos(\theta)+1}$, which is equivalent to $\cot^2(\frac\beta2)=\frac{1+\cos(\beta)}{1-\cos(\beta)}>\cos(\theta)$, and therefore to
  \[ (\Xi(\beta))^2+k_0^2\tan^2(\tfrac\theta2) > \frac{k_0^2}{\cos^2(\frac\theta2)}(\cos^2(\tfrac\theta2)-\sin^2(\tfrac\theta2))+k_0^2\tan^2(\tfrac\theta2) = k_0^2. \]
  Analogously, we find that $\beta\in J^*_{s,t}$ 
	is by definition \eqref{eq:dualCircleAngle} for $\beta\in(-\pi,\pi]$ equivalent to $\cos(\beta)>\frac{\cos(\theta)+1}{\cos(\theta)-1}$, which is equivalent to $\cot^2(\frac\beta2)=\frac{1+\cos(\beta)}{1-\cos(\beta)}>-\cos(\theta)$, and therefore to
  \[ (\Xi^*(\beta))^2+k_0^2\cot^2(\tfrac\theta2) > \frac{k_0^2}{\sin^2(\frac\theta2)}(\sin^2(\tfrac\theta2)-\cos^2(\tfrac\theta2))+k_0^2\cot^2(\tfrac\theta2) = k_0^2. \]
  
  Then, according to \eqref{eq:tau_gamma}, the curves $\tau\circ\bgam_{s,t}\circ\Xi^{-1}$ and $\tau\circ\bgam_{t,s}\circ\Xi^{-1}$ are with
  \begin{equation}\label{eq:reconStereoParam}
    b = k_0\tan(\tfrac\theta2),\quad\bw_1^1=\bw_{s,t}^1=\begin{pmatrix}\cos(\varphi)\\\sin(\varphi)\end{pmatrix},\quad\text{and}\quad\bw_2^1=\bw_{t,s}^1=\begin{pmatrix}\cos(\pi-\psi)\\\sin(\pi-\psi)\end{pmatrix}
  \end{equation}
  on the set $X\coloneqq\{\xi\in\R:\xi^2>k_0^2-b^2\}$ of the form \eqref{eq:Gamma}; and according to \eqref{eq:tau_gamma_dual}, the dual curves $\tau\circ\zb{\gamma}^*_{s,t}\circ(\Xi^*)^{-1}$ and $\tau\circ\zb{\gamma}^*_{t,s}\circ (\Xi^*)^{-1}$ are with this on the set 
	$X^*\coloneqq\{\xi\in\R:\xi^2>k_0^2-k_0^4b^{-2}\}$ of the form~\eqref{eq:GammaDual}. Moreover, these curves fulfill according to \eqref{eq:commonCircleIdentity} and \eqref{eq:commonCircleIdentityDual} the relations
  \begin{align}
    \tilde\nu_s(\tau(\bgam_{s,t}(\Xi^{-1}(\xi)))) 
		&= \nu_s(\bgam_{s,t}(\Xi^{-1}(\xi))) 
		\\
		&= \nu_t(\bgam_{t,s}(-\Xi^{-1}(\xi))) = \tilde\nu_t(\tau(\bgam_{t,s}(\Xi^{-1}(-\xi)))),\quad \xi\in X,
		\\
    \tilde\nu_s(\tau(\zb{\gamma}^*_{s,t}((\Xi^*)^{-1}(\xi)))) 
		&= \nu_s(\zb{\gamma}^*_{s,t}((\Xi^*)^{-1}(\xi))) 
		\\
		&= \nu_t(\zb{\gamma}^*_{t,s}((\Xi^*)^{-1}(\xi))) 
		= \tilde\nu_t(\tau(\zb{\gamma}^*_{t,s}((\Xi^*)^{-1}(\xi)))),\quad \xi\in X^*.
  \end{align}  
  The uniqueness of the pairs $(\zb\Gamma_\ell)_{\ell=1}^2$ and $(\zb{\Gamma}^*_\ell)_{\ell=1}^2$ 
	therefore implies 
	$\zb\Gamma_1=\tau\circ\bgam_{s,t}\circ\Xi^{-1}$, $\zb\Gamma_2=\tau\circ\bgam_{t,s}\circ\Xi^{-1}$, $\zb{\Gamma}^*_1=\tau\circ\zb{\gamma}^*_{s,t}\circ(\Xi^*)^{-1}$, and $\zb{\Gamma}^*_2=\tau\circ\zb{\gamma}^*_{t,s}\circ(\Xi^*)^{-1}$, so that we can read off the Euler angles from the correspondencies \eqref{eq:reconStereoParam}, giving us the reconstruction \eqref{eq:reconStereoEuler}.
\end{proof}

\subsection{Infinitesimal common circle method}
\label{se:stereo_inf}

We can also formulate the infinitesimal common circle method from \autoref{sec:cont-cc} via common lines.
We show the following analogue of \autoref{th:infRel},
where
the coefficient of the spatial derivative becomes affine.

\begin{lemma}\label{th:infRelStereo}
  Let the rotations $R\in C^1([0,T]\to\SO)$ be continuously differentiable and the associated angular velocities $\zb\omega_t\in \R^3$ be written in cylindrical coordinates \eqref{eq:omega-decomp}.
  Then, the transformed scaled squared energy $\tilde\nu_t$, defined in \eqref{eq:hatNu}, satisfies for every $ r\in\R\setminus[-k_0,k_0]$ and $t\in[0,T]$ the relation
  \begin{equation}\label{eq:infRelStereo}
    \partial_t\tilde\nu_t( r\zb\phi_t)=(k_0\rho_t+ r\zeta_t)\left<\nabla\tilde\nu_t( r\zb\phi_t),\begin{psmallmatrix}-\phi_{t,2}\\\phi_{t,1}\end{psmallmatrix}\right>.
  \end{equation}
\end{lemma}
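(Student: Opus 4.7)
The plan is to derive \eqref{eq:infRelStereo} directly from its non-stereographic analogue \eqref{eq:infRel} by the chain rule, exploiting that $\tilde\nu_t=\nu_t\circ\tau^{-1}$ and that the map $\tau^{-1}(\by)=\tfrac{2k_0^2}{k_0^2+\norm{\by}^2}\by$ is purely radial and time-independent. The only real difficulty is bookkeeping the two scalar factors that appear, one coming from the Jacobian of $\tau^{-1}$ in the tangential direction and one from applying \autoref{th:infRel} at the pulled-back radius; making them combine into the clean affine coefficient $k_0\rho_t+r\zeta_t$ is the single substantive identity to verify.

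First I would compute the pointwise pull-back: for $r\in\R\setminus[-k_0,k_0]$ and $\zb\phi_t\in\S^1_+$, one has $\tau^{-1}(r\zb\phi_t)=\tilde r\,\zb\phi_t$ with $\tilde r\coloneqq\tfrac{2k_0^2 r}{k_0^2+r^2}\in(-k_0,k_0)\setminus\{0\}$. Since $\tau^{-1}$ is independent of $t$, this immediately gives $\partial_t\tilde\nu_t(r\zb\phi_t)=\partial_t\nu_t(\tilde r\zb\phi_t)$, so all the work shifts to rewriting the spatial gradient.

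Second, I would observe that the Jacobian of $\tau^{-1}$ at $\by=r\zb\phi_t$ is diagonal in the orthonormal frame $\{\zb\phi_t,\,(-\phi_{t,2},\phi_{t,1})^\top\}$; a short computation shows that the tangential eigenvalue equals $\tfrac{2k_0^2}{k_0^2+r^2}$. The chain rule therefore yields
\[
\left\langle\nabla\tilde\nu_t(r\zb\phi_t),\begin{pmatrix}-\phi_{t,2}\\\phi_{t,1}\end{pmatrix}\right\rangle=\frac{2k_0^2}{k_0^2+r^2}\left\langle\nabla\nu_t(\tilde r\zb\phi_t),\begin{pmatrix}-\phi_{t,2}\\\phi_{t,1}\end{pmatrix}\right\rangle.
\]

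Finally, substituting \autoref{th:infRel} for $\partial_t\nu_t(\tilde r\zb\phi_t)$ reduces the claim to the algebraic identity
\[
\rho_t\bigl(k_0-\sqrt{k_0^2-\tilde r^2}\bigr)+\tilde r\zeta_t=\frac{2k_0^2}{k_0^2+r^2}(k_0\rho_t+r\zeta_t).
\]
The $\zeta_t$-coefficient matches tautologically from the definition of $\tilde r$. For the $\rho_t$-coefficient, the key observation is $k_0^2-\tilde r^2=k_0^2(r^2-k_0^2)^2/(k_0^2+r^2)^2$, so for $|r|>k_0$ one obtains $\sqrt{k_0^2-\tilde r^2}=k_0(r^2-k_0^2)/(k_0^2+r^2)$ and hence $k_0-\sqrt{k_0^2-\tilde r^2}=2k_0^3/(k_0^2+r^2)$, as required. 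The only place where the assumption $r\in\R\setminus[-k_0,k_0]$ is used is in selecting the correct sign of the square root; beyond that the argument is routine.
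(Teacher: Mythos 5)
Your proof is correct, but it takes a genuinely different route from the paper. The paper's proof is self-contained: it rewrites $\tilde\nu_t(\by)=\abs{\ktran[f](\bpi(K(t,\by)))}^2$ with the affine map $K(t,\by)=R_t\begin{psmallmatrix}\by\\-k_0\end{psmallmatrix}$ and the involutive stereographic projection $\bpi$, then verifies by direct computation that $\partial_tK(t,r\zb\phi_t)=(k_0\rho_t+r\zeta_t)\,DK(t,r\zb\phi_t)\begin{psmallmatrix}-\phi_{t,2}\\\phi_{t,1}\end{psmallmatrix}$, so the chain rule delivers \eqref{eq:infRelStereo} without ever invoking \autoref{th:infRel}. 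This makes transparent \emph{why} the coefficient becomes affine in $r$: the parameterization $K$ is affine in $\by$, so its Jacobian introduces no radial distortion. You instead pull \eqref{eq:infRelStereo} back through $\tau^{-1}$ and reduce it to the already-proved \autoref{th:infRel}; the substantive content is then the scalar identity $\rho_t\bigl(k_0-\sqrt{k_0^2-\tilde r^2}\bigr)+\tilde r\zeta_t=\frac{2k_0^2}{k_0^2+r^2}(k_0\rho_t+r\zeta_t)$ with $\tilde r=\frac{2k_0^2r}{k_0^2+r^2}$, which you verify correctly (including the sign of the square root, which is the only place $\abs{r}>k_0$ enters, and which also guarantees $\tilde r\in(-k_0,k_0)$ so that \autoref{th:infRel} is applicable). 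Your tangential eigenvalue $\frac{2k_0^2}{k_0^2+r^2}$ of $D\tau^{-1}$ is right, since $\tau^{-1}(\by)=g(\norm{\by}^2)\by$ has Jacobian $g(\norm{\by}^2)I+2g'(\norm{\by}^2)\by\by^\top$ and the rank-one term annihilates the tangential direction. Your argument is shorter and reuses existing machinery; the paper's is independent of \autoref{th:infRel} and arguably more illuminating about the structural reason for the simplification. Both are complete.
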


\begin{proof}
  The transformed data $\tilde\nu_t$ is given by
  \[ \tilde\nu_t(\by) = \nu_t(\tau^{-1}(\by)) = \abs{\ktran[f](R_t\bh(\tau^{-1}(\by)))}^2. \]
  Using now that we have by definition \eqref{eq:pi0_def} of $\bpi_0$ and definition \eqref{eq:tau} of $\tau$ that
  \[ \begin{pmatrix}\tau(\tau^{-1}(\by))\\-k_0\end{pmatrix} = \bpi_0(\bh(\tau^{-1}(\by))) = R_t^\top\bpi_t(R_t\bh(\tau^{-1}(\by))), \]
  we have with \autoref{th:stereo} and $\bpi^{-1}=\bpi$ the relation
  \begin{equation}\label{eq:hatH}
    \tilde\nu_t(\by)
    = \abs{\ktran[f](R_t\bh(\tau^{-1}(\by)))}^2
    = \abs{\ktran[f](\bpi((t,\by)))}^2
  \end{equation}
  with the function $K\colon[0,T]\times\R^2\setminus\overline{\mathcal B_{k_0}^2}\to\R^3$ defined by $K(t,\by)\coloneqq R_t\begin{psmallmatrix}\by\\-k_0\end{psmallmatrix}$.
  
  Since the partial derivative $\partial_tK$ of $K$ with respect to $t$ fulfills
  \[ \partial_t K(t, r\zb\phi_t) = R_t\left(\omega_t\times\begin{psmallmatrix} r\zb\phi_t\\-k_0\end{psmallmatrix}\right) = R_t\left(\begin{psmallmatrix}\rho_t\zb\phi_t\\\zeta_t\end{psmallmatrix}\times\begin{psmallmatrix} r\zb\phi_t\\-k_0\end{psmallmatrix}\right) = (k_0\rho_t+ r\zeta_t)R_t\begin{psmallmatrix}-\phi_{t,2}\\\phi_{t,1}\\0\end{psmallmatrix} \]
  and the Jacobi matrix $DK$ of $K$ with respect to $\by$ satisfies
  \[ DK(t, r\zb\phi_t)\begin{psmallmatrix}-\phi_{t,2}\\\phi_{t,1}\end{psmallmatrix} = R_t\begin{psmallmatrix}1&0\\0&1\\0&0\end{psmallmatrix}\begin{psmallmatrix}-\phi_{t,2}\\\phi_{t,1}\end{psmallmatrix} = R_t \begin{psmallmatrix}-\phi_{t,2}\\\phi_{t,1}\\0\end{psmallmatrix}, \]
  we have
  \[ \partial_tK(t, r\zb\phi_t) = (k_0\rho_t+ r\zeta_t)DK(t, r\zb\phi_t)\begin{psmallmatrix}-\phi_{t,2}\\\phi_{t,1}\end{psmallmatrix}, \]
  which implies with \eqref{eq:hatH} directly \eqref{eq:infRelStereo}.
\end{proof}

We obtain the following analogue to \autoref{th:infRecon} for reconstructing the angular velocity $\zb\omega_t$, from which we can determine the rotation matrices $R_t$ by \autoref{th:infReconR}.

\begin{theorem}\label{th:infReconStereo}
  Let the rotations $R\in C^1([0,T]\to\SO)$ be continuously differentiable and $t\in[0,T]$.
  Let further ${\zb\phi}\in\S^1_+$ be a unique direction
  with the property that there exist parameters $\rho,\zeta\in\R$ with
  \[ \partial_t\tilde\nu_t( r{\zb\phi})
  =
  (k_0\rho+ r\zeta)\left<\nabla\tilde\nu_t( r{\zb\phi}),
  \begin{psmallmatrix}-\phi_{2}\\\phi_{1}
  \end{psmallmatrix}\right>\quad\text{for all}\quad r\in\R\setminus[-k_0,k_0]
  \]
  for the transformed scaled squared energy $\tilde\nu_t$ in \eqref{eq:hatNu}.
  Provided that the set
  \begin{equation*}
    {\mathcal M}_t\coloneqq\left\{ r\in\R\setminus[-k_0,k_0]:\left<\nabla\tilde\nu_t( r{\zb\phi}),\begin{psmallmatrix}-\phi_{2}\\\phi_{1}\end{psmallmatrix}\right>\ne0\right\}
  \end{equation*}
  contains at least two elements, then the angular velocity is given by $\zb\omega_t = (\rho{\zb\phi},\zeta)^\top$.
\end{theorem}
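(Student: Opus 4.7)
The plan is to mirror the proof of \autoref{th:infRecon}, using \autoref{th:infRelStereo} as the infinitesimal identity in place of \autoref{th:infRel}. By \autoref{th:infRelStereo} applied to the true angular velocity $\zb\omega_t=(\rho_t\zb\phi_t,\zeta_t)^\top$, we have
\[
\partial_t\tilde\nu_t(r\zb\phi_t)
=(k_0\rho_t+r\zeta_t)\left\langle\nabla\tilde\nu_t(r\zb\phi_t),\begin{psmallmatrix}-\phi_{t,2}\\\phi_{t,1}\end{psmallmatrix}\right\rangle
\quad\text{for all }r\in\R\setminus[-k_0,k_0].
\]
Since the hypothesis supplies \emph{exactly one} direction $\zb\phi\in\S^1_+$ for which such an identity holds with some choice of scalars, the uniqueness forces $\zb\phi=\zb\phi_t$, and it remains only to identify $\rho$ with $\rho_t$ and $\zeta$ with $\zeta_t$.

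The second step is to subtract the hypothesized identity from the one above. For every $r\in\mathcal M_t$ the inner product on the right-hand side is nonzero by the very definition of $\mathcal M_t$, so it can be cancelled, leaving the affine equation
\[
k_0(\rho-\rho_t)+r(\zeta-\zeta_t)=0\quad\text{for all }r\in\mathcal M_t.
\]
Evaluating at two distinct elements $r_1,r_2\in\mathcal M_t$ produces a $2\times 2$ linear system with coefficient matrix $\begin{psmallmatrix}k_0 & r_1\\ k_0 & r_2\end{psmallmatrix}$, whose determinant $k_0(r_2-r_1)$ is nonzero. Hence the only solution is $\rho=\rho_t$ and $\zeta=\zeta_t$, which gives $\zb\omega_t=(\rho\zb\phi,\zeta)^\top$ as claimed.

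Compared with the proof of \autoref{th:infRecon}, where one had to invoke the bijectivity of the nonlinear map $r\mapsto r/(k_0-\sqrt{k_0^2-r^2})$ to decouple the contributions of $\rho_t$ and $\zeta_t$, the stereographic setting is markedly simpler: after the reparameterization through $\tau$, the coefficient in front of the angular derivative is \emph{affine} in $r$, so the decoupling reduces to the elementary fact that two distinct nodes determine an affine function. Consequently I do not expect any substantial obstacle; the only point to verify beyond \autoref{th:infRelStereo} and the uniqueness of $\zb\phi$ is that $\mathcal M_t$ contains at least two distinct elements, which is explicitly part of the hypothesis.
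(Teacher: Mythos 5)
Your proposal is correct and follows essentially the same route as the paper's own proof: invoke \autoref{th:infRelStereo} for the true angular velocity, use the assumed uniqueness of $\zb\phi$ to conclude $\zb\phi=\zb\phi_t$, cancel the nonzero inner product on $\mathcal M_t$ to obtain the affine identity $k_0\rho+r\zeta=k_0\rho_t+r\zeta_t$, and evaluate at two distinct values of $r$. The only (harmless) difference is that you spell out the $2\times2$ determinant argument that the paper leaves implicit.
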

\begin{proof}
  From \autoref{th:infRelStereo}, we find that the uniqueness implies that $\zb\phi_t={\zb\phi}$ and therefore also
  \begin{equation*}
    k_0\rho+ r\zeta = k_0\rho_t+ r\zeta_t\quad \text{for all}\quad r\in{\mathcal M}_t.
  \end{equation*}
  Hence we have $\rho=\rho_t$ and $\zeta=\zeta_t$ if the equation is satisfied for two different values $ r$.
\end{proof}

\end{document}